\theoremstyle{plain}
\newtheorem{theorem}{Theorem}
\newtheorem{lemma}{Lemma}
\newtheorem{proposition}{Proposition}
 \theoremstyle{definition}
\newtheorem{definition}{Definition}
\newtheorem{example}{Example}
\crefname{section}{\S}{\S\S}
\crefname{appendix}{App.}{Apps.}
\Crefname{equation}{Eq.}{Eqs.}
\Crefname{theorem}{Thm.}{Thms.}
\Crefname{lemma}{Lem.}{Lems.}
\Crefname{corollary}{Cor.}{Cors.}
\Crefname{proposition}{Prop.}{Props.}
\Crefname{definition}{Def.}{Defs.}
\Crefname{notation}{Not.}{Nots.}
\Crefname{conjecture}{Conj.}{Conjs.}
\Crefname{remark}{Rk.}{Rks.}
\Crefname{problem}{Prob.}{Probs.}
\Crefname{example}{Ex.}{Exs.}
\Crefname{figure}{Fig.}{Figs.}
\Crefname{tabular}{Tab.}{Tabs.}
\DeclareMathOperator\spn{span} 
\DeclareMathOperator\Ad{Ad} 
\DeclareMathOperator\SO{SO}
\DeclarePairedDelimiterX\MeijerM[3]{\lparen}{\rparen}%
{\begin{smallmatrix}#1 \\ #2\end{smallmatrix}\delimsize\vert\,#3}
\newcommand\MeijerG[8][]{%
  G^{\,#2,#3}_{#4,#5}\MeijerM[#1]{#6}{#7}{#8}}
\newcommand\MeijerG*[7]{%
  G^{\,#1,#2}_{#3,#4}\MeijerM*{#5}{#6}{#7}}
\newcommand{\iprod}{\mathbin{\lrcorner}} 
\begin{document}

\title{Exterior differential systems on Lie algebroids  and the invariant inverse problem of the calculus of variations}

\author{Tom Mestdag and Kenzo Yasaka\\[2mm]
	{\small Department of Mathematics,  University of Antwerp,}\\
	{\small Middelheimlaan 1, 2020 Antwerpen, Belgium}
}

\date{}

\maketitle

\begin{abstract}
We extend the theory of exterior differential systems from manifolds and their tangent bundles to Lie algebroids. In particular, we define the concept of an integral manifold of such an exterior differential system. We support our developments with several examples, including an application to dynamical systems with a symmetry group and to the invariant inverse problem of the calculus of variations.
\end{abstract}

\vspace{3mm}

\textbf{Keywords:} Lie algebroids, exterior differential systems, exterior differential calculus, Euler--Poincar{\'e} equations, inverse problem, Lagrangian system, second-order ordinary differential equations.
\vspace{3mm}

\textbf{Mathematics Subject Classification:} 
53D17,
53Z05,
58A15,
70F17,
70G65.





\maketitle



\section{Introduction}
Roughly speaking, an \emph{exterior differential system (EDS)} is a system of differential equations on a manifold defined by setting a certain set of differential forms equal to zero. Of particular interest are \emph{integral manifolds}, which are submanifolds satisfying all equations of the system. Notably, a system of partial differential equations, with any number of independent and dependent variables and involving partial derivatives of any order, can be written as an EDS. Therefore, integral manifolds provide a geometric description of finding solutions to a system of PDEs.

The theory of EDS originates from the ideas of several mathematicians, such as J.F.\ Pfaff, G.\ Darboux \cite{darboux1882pfaff}, {\'E}. Goursat and, most prominently, {\'E}.\ Cartan \cite{cartan1922invariant,cartan1945eds}. E.\ K{\"a}hler also contributed much to the field \cite{kahler1934ckthm}, which resulted in an existence theorem which we now know as the \emph{Cartan--K{\"a}hler theorem}. EDS has proven useful in the study of various mathematical problems, such as the isometric embedding problem \cite{bryant1991exterior}, Cartan's equivalence method \cite{olver1995equivalence} and the inverse problem of the calculus of variations \cite{anderson1992invprob}.

One special subcase of this last problem is part of the motivation of the current paper. In general, the inverse problem deals with the question of whether a regular Lagrangian exists for a given set of second-order differential equations. One way of answering this question lies in finding a so-called \emph{multiplier} that solves a set of necessary and sufficient conditions, the so-called Helmholtz conditions. Since this mainly involves solving PDEs, many recent attempts make use of EDS on manifolds \cite{aldridge2003invprob,aldridge2,anderson1992invprob,do2016invprob,doprince}. However, up to now, there does not yet exist a complete solution to the problem, apart from dimension 2 \cite{Douglas}. In this paper, we will address a particular subproblem: \emph{Does a $G$-invariant Lagrangian exist for a given $G$-invariant second-order system on a Lie group $G$?} This is the so-called \emph{invariant inverse problem}  (see e.g., \cite{crampin2008invariantinv,muzsnay2005invariantinv}). We will show that, if one wants to make full use of the symmetry in the problem, one needs to devise a new technique for solving it, that of \emph{EDS on Lie algebroids}.

In the first half of the paper, we generalize the theory of EDS from manifolds to Lie algebroids. Our results are more general than those in the literature, in the sense that they reduce to those in the specific case where the Lie algebroid is taken to be the tangent bundle of a manifold (cf. \cite{bryant1991exterior,ivey2016cartan}). In the second half of the paper, we motivate the need for our extension by showing how three seemingly disparate problems involving differential equations can all be recast as an EDS on a certain Lie algebroid.

In \Cref{sec:preliminaries}, we present the preliminaries, in an attempt to keep the paper self-contained. In particular, we give an introduction to Lie algebroids and define an exterior derivative operator on sections of the dual of the Lie algebroid in order to meaningfully define an EDS on a Lie algebroid. We will also discuss the geometry of the space $\mathbb{R}\times \mathrm{T}M$ of a smooth manifold $M$. Such geometry is useful in the study of the inverse problem (cf. \cite{crampin1984geohelmholtz,krupkova2007jetbundles}).

In \Cref{sec:pullback}, we discuss prolongation bundles and their Lie algebroid structure (as introduced in e.g., \cite{Higgins,martinez2001lagrangeliealgebroid, martinez2002prolongation}, see \Cref{def:prol}), which will allow us to define the concept of an integral manifold for a set of forms on a Lie algebroid. The intuition for why this type of construction is necessary is as follows. If we are given a Lie algebroid $A\to M$, we would want integral manifolds to be seen as the image of a section of a fiber bundle $p\colon \overline{M}\to M$, in such a way that it represents a solution of a certain system of PDEs, where we let $\overline{M}$ consist of all coordinates in $M$, together with the dependent variables of this system. The $p$-prolongation bundle of $A$ is a new Lie algebroid $\overline{A}\to \overline{M}$, which represents this system of PDEs.  In \Cref{newprop1} we show that we can split the prolongation bundle, by making use of a connection on $p$. In \Cref{sec:EDSintman}, we define integral manifolds for Lie algebroids, and we give an example where we consider a differential ideal (a graded ideal in the ring of forms on a Lie algebroid closed under the exterior derivative operator) generated only by $1$-forms. The efficacy of our definition is confirmed by the result in \Cref{prop:depcond}.

As a first concrete example of a problem which we can reformulate as an EDS on a specific Lie algebroid, we consider a first-order semilinear PDE. We will show in \Cref{sec:applications} that solving this PDE is equivalent to finding integral manifolds of a differential ideal generated by a single $1$-form. 

Besides being an extension of the standard definition for a tangent bundle, we show in \Cref{sec:symmetry} that our definition also applies to Atiyah Lie algebroids, which are central to the study of  differential equations with a  symmetry Lie group (see, for example, \cite{barbero2016invproblemliealgebroids, firstorderrec,deleon2005survey,Educontrol}).  In \Cref{prop4}, we show first that the prolongation of an Atiyah Lie algebroid can again be identified with an Atiyah Lie algebroid. Our second application concerns finding solutions of first-order ODEs with symmetry.  We refer to e.g., \cite{fels2007edssymmetry,fels2008edssymmetry} for another approach, using EDS on manifolds.

As announced before, in \Cref{sec:invariantinverse}, we present our most substantial application. To the best of our knowledge, using exterior differential systems (on either manifolds or Lie algebroids) to solve the invariant inverse problem  on a Lie group has never been attempted. It was shown in \cite{crampin2008invariantinv}, that such a Lagrangian exists if and only if two conditions are satisfied. The first condition is the existence of a \emph{reduced multiplier matrix} for the system reduced to the level of its Lie algebra (\Cref{def:reducedmultiplier}). The second condition is a cohomological condition. In \Cref{prop:cohomology} we first establish a time-dependent version of the cohomological  condition. Then, we show in \Cref{thm:twoform} how to rewrite the first condition in terms of a two-form on a specific Lie algebroid, that we call the \emph{IP Lie algebroid} in this context (\Cref{def:IPalgebroid}). In \Cref{prop:new} we describe how the search for a reduced multiplier can be formulated as an EDS problem  on the IP Lie algebroid. Finally, in \Cref{thm:helmholtzintmfd} we establish the explicit link between reduced multipliers and integral manifolds, in the case that the algebraic ideal at hand is already a differential ideal.  

We conclude the paper with some suggestions for future work.

\section{Preliminaries}\label{sec:preliminaries}
\subsection{Lie algebroids}

We start with the definition of a Lie algebroid. We refer to e.g., \cite{mackenzie1987algebroid,marle2008algebroid} for more details about the concepts we introduce in this subsection. In the following, we denote the space of sections of a vector bundle $A\to M$ by $\Gamma(A)$. In particular, if $A$ is the tangent bundle of a manifold $M$, we denote the collection of smooth vector fields by $\mathcal{X}(M)\coloneqq \Gamma(\mathrm{T}M)$, and its usual Lie bracket by $[\cdot,\cdot]$.

\begin{definition}
A \emph{Lie algebroid} is a triple $(A,\llbracket \cdot,\cdot\rrbracket,\rho)$, where
\begin{itemize}
    \item $\tau\colon A\to M$, $(x^i,w^{\alpha})\mapsto (x^i)$ is a vector bundle over a manifold $M$,
    \item $\rho\colon A\to \mathrm{T}M$ is a linear bundle map,
    \item $(\Gamma(A),\llbracket \cdot,\cdot\rrbracket)$ is a Lie algebra,
    \item $\llbracket \sigma_1,f\sigma_2\rrbracket=\rho(\sigma_1)(f)\sigma_2+f\llbracket \sigma_1,\sigma_2\rrbracket$, where $\sigma_1,\sigma_2\in \Gamma(A)$, $f\in C^{\infty}(M)$. Here it is used that the map $\rho$ descends to a map between sections $\rho\colon \Gamma(A)\to \mathcal{X}(M)$ (also denoted by $\rho$).
\end{itemize}
We call $\rho$ the \emph{anchor map}, and $\llbracket \cdot,\cdot\rrbracket$ the \emph{bracket} of $A$.
\end{definition}


It is well known that Lie algebroids generalize two fundamental concepts: The tangent bundle $\mathrm{T}M$ of a manifold $M$ (with the Lie bracket of vector fields and $\rho=\mathrm{id}$) and Lie algebras (which are Lie algebroids over a point with $\rho=0$).

Locally, if $\{e_{\alpha}\}$ is a basis of sections of $A$, one can write
\begin{equation*}
    \rho(e_{\alpha})=\rho_{\alpha}^i(x)\frac{\partial}{\partial x^i}\in \mathcal{X}(M),
\end{equation*}
\begin{equation*}
    \llbracket e_{\alpha},e_{\beta}\rrbracket=L_{\alpha \beta}^{\gamma}(x)e_{\gamma}.
\end{equation*}
It is easy to see that $L_{\alpha \beta}^{\gamma}=-L_{\beta \alpha}^{\gamma}$. The real local functions $\{L_{\alpha \beta}^\gamma\}$ are called the \emph{structure functions}. Since the map $\rho\colon \Gamma(A)\to \mathcal{X}(M)$ is a Lie algebra morphism, it follows from $\rho(\llbracket e_{\alpha},e_{\beta}\rrbracket)=[\rho(e_{\alpha}),\rho(e_{\beta})]$ that the following identity always holds:
\begin{equation}\label{eqn:anchorspecialidentity}
    \rho_{\gamma}^k L_{\alpha \beta}^{\gamma}=\rho_{\alpha}^i \frac{\partial \rho_{\beta}^k}{\partial x^i}-\rho_{\beta}^j \frac{\partial \rho_{\alpha}^k}{\partial x^j}.
\end{equation}

We now construct the induced exterior derivative operator on sections of exterior powers of the dual of the Lie algebroid (which we will call a \emph{form} on a Lie algebroid). We will denote the exterior derivative operator by $\delta$, and $q$-forms on $A\to M$ by $\Lambda^q(A)$.

\begin{definition}
The map
\begin{equation*}
    \delta\colon \Lambda^q(A)\longrightarrow \Lambda^{q+1}(A),\quad \omega \longmapsto \delta \omega
\end{equation*}
for $q=0$ is defined by
\begin{equation*}
    \delta f(\sigma)=\rho(\sigma) f
\end{equation*}
for any $\sigma\in \Gamma(A)$ and $f\in \Lambda^0(A)=C^{\infty}(M)$,  and for $q\geq 1$ by
\begin{multline*}
    \delta \omega(\sigma_1,\dots,\sigma_{q+1})=\sum_{i=1}^{q+1} (-1)^{i+1}\rho(\sigma_i)\left(\omega(\sigma_1,\dots,\hat{\sigma}_i,\dots,\sigma_{q+1})\right)\\+\sum_{1\leq i<j\leq q+1} (-1)^{i+j} \omega(\llbracket \sigma_i,\sigma_j\rrbracket,\sigma_1,\dots,\hat{\sigma}_i,\dots,\hat{\sigma}_j,\dots,\sigma_{q+1}),
\end{multline*}
for any $\sigma_1,\dots,\sigma_{q+1}\in \Gamma(A)$ and $\omega\in \Lambda^q(A)$. In the above expression, a hat denotes the omission of that element.
\end{definition}
It is not difficult to verify that $\delta^2=0$ and that for any $\omega\in \Lambda^q(A)$ and $\theta\in \Lambda^r(A)$,
\begin{equation*}
    \delta(\omega\wedge \theta)=\delta \omega\wedge \theta+(-1)^q \omega\wedge \delta \theta.
\end{equation*}
If $\{e^{\alpha}\}$ is the dual basis of $\{e_{\alpha}\}$, then $\delta$ is given by
\begin{equation}\label{eqn:structure}
    \delta f=\frac{\partial f}{\partial x^i}\rho_{\alpha}^i e^{\alpha},\quad \delta e^{\alpha}=-\frac{1}{2}L_{\beta \gamma}^{\alpha}(x) e^{\beta}\wedge e^{\gamma}.
\end{equation}

We will also need to define two other operations on forms.
\begin{definition}
The \emph{Lie derivative} of a form with respect to a section $\sigma\in \Gamma(A)$ is a map
\begin{equation*}
    \mathcal{L}_{\sigma}\colon \Lambda^q(A)\longrightarrow \Lambda^q(A),
\end{equation*}
given for $q=0$ by
\begin{equation*}
    \mathcal{L}_{\sigma}(f)=\rho(\sigma)(f),
\end{equation*}
for any $f\in \Lambda^0(A)=C^{\infty}(M)$ and for $q\geq 1$ by
\begin{equation*}
    \mathcal{L}_{\sigma} \omega(\sigma_1,\dots,\sigma_q)=\rho(\sigma)(\omega(\sigma_1,\dots,\sigma_q))-\sum_{i=1}^q \omega(\sigma_1,\dots,\llbracket \sigma,\sigma_i\rrbracket,\dots,\sigma_q),
\end{equation*}
for any $\sigma_1,\dots,\sigma_q\in \Gamma(A)$ and $\omega\in \Lambda^q(A)$.
\end{definition}

The Lie derivative satisfies a \emph{Leibniz rule}
\begin{equation*}
    \mathcal{L}_{\sigma}(\omega\wedge \theta)=(\mathcal{L}_{\sigma}\omega)\wedge \theta+\omega\wedge (\mathcal{L}_{\sigma}\theta),
\end{equation*}
and a \emph{Cartan formula}, namely
\begin{equation*}
    \mathcal{L}_{\sigma}=\delta\circ \iota_{\sigma}+\iota_{\sigma}\circ \delta,
\end{equation*}
where $\sigma\in \Gamma(A)$, $\omega\in \Lambda^q(A)$ and $\theta\in \Lambda^r(A)$. Here $\iota_{\sigma}\colon \Lambda^q(A)\to \Lambda^{q-1}(A)$ is the \emph{interior product} of a form by $\sigma$, i.e.,
\begin{equation*}
    \iota_{\sigma}\omega(\sigma_1,\dots,\sigma_{q-1})=\omega(\sigma,\sigma_1,\dots,\sigma_{q-1}),
\end{equation*}
for all $\sigma_1,\dots,\sigma_{q-1}\in \Gamma(A)$.

\subsection{\texorpdfstring{Geometry of the evolution space $\mathbb{R}\times \mathrm{T}M$}{Geometry of the evolution space R x TM}}\label{subsec:evolution}

In this subsection, we introduce the basics on the geometry of the so-called \emph{evolution space} $E\coloneqq \mathbb{R}\times \mathrm{T}M$, where $M$ is a smooth manifold. For a more in-depth treatment on this topic, one can refer to \cite{crampin1984geohelmholtz,do2016invprob,krupkova2007jetbundles}.

Suppose that we are given a vector field $X\in \mathcal{X}(M)$ on a smooth manifold $M$, given in coordinates by
\begin{equation*}
    X=X^i \frac{\partial}{\partial x^i}.
\end{equation*}
Any such vector field can be lifted to a vector field on $\mathrm{T}M$, by means of the \emph{complete lift} $X^C$ and \emph{vertical lift} $X^V$, given in canonical coordinates $(x^i,\dot{x}^i)$ by
\begin{equation*}
    X^C\coloneqq X^i \frac{\partial}{\partial x^i}+\frac{\partial X^i}{\partial x^j}\dot{x}^j\frac{\partial}{\partial \dot{x}^i},
\end{equation*}
\begin{equation*}
    X^V\coloneqq X^i V_i,\quad V_i\coloneqq \frac{\partial}{\partial \dot{x}^i}.
\end{equation*}
Some convenient formulae for the brackets of complete and vertical lifts are given by
\begin{equation*}
    [X^C,Y^C]=[X,Y]^C,\quad [X^C,Y^V]=[X,Y]^V,\quad [X^V,Y^V]=0,
\end{equation*}
where $X,Y\in \mathcal{X}(M)$. If we have a second-order differential equation field $\Gamma$ at our disposal, say
\begin{equation*}
    \Gamma=\frac{\partial}{\partial t}+\dot{x}^i \frac{\partial}{\partial x^i}+F^i \frac{\partial}{\partial \dot{x}^i},
\end{equation*}
which represents the differential equations $\ddot{x}^i=F^i(t,x^j,\dot{x}^j)$, we can also define the \emph{horizontal lift} of $X\in \mathcal{X}(M)$, given by
\begin{equation*}
    X^H\coloneqq X^i H_i,
\end{equation*}
where
\begin{equation*}
    H_i\coloneqq \frac{\partial}{\partial x^i}-\Gamma_i^j\frac{\partial}{\partial \dot{x}^j},\quad \Gamma_i^j\coloneqq -\frac{1}{2}\frac{\partial F^j}{\partial \dot{x}^i}.
\end{equation*}
We can also lift the vector fields $X^V, X^H$ on $\mathrm{T}M$ to $E$, in an obvious way. A basis of $\mathcal{X}(E)$ is then given by $\{\Gamma,V_i,H_i\}$. Its dual basis is given by $\{\mathrm{d}t,\psi^i,\theta^i\}$, where
\begin{equation*}
    \psi^i=\mathrm{d}\dot{x}^i-F^i\, \mathrm{d}t+\Gamma_j^i \theta^j,\quad \theta^i=\mathrm{d}x^i-F^i\, \mathrm{d}t.
\end{equation*}
Calculating Lie brackets on these vector fields gives the following expressions (see also \cite{crampin1984geohelmholtz,do2016invprob,krupkova2007jetbundles}):
\begin{equation} \label{prop:bracketsstandard1}
    [\Gamma,V_i]=-H_i+\Gamma_i^j H_j,\quad [\Gamma,H_i]=\Gamma_i^j H_j+\Phi_i^j V_j,\quad [V_i,V_j]=0,
\end{equation}
\begin{equation}\label{prop:bracketsstandard2}
    [H_i,V_j]=\Gamma_{ij}^k V_k,\quad [H_i,H_j]=R_{ij}^k V_k,
\end{equation}
where
\begin{equation*}
    \Phi_i^j=-\frac{\partial F^j}{\partial x^i}-\Gamma_k^j \Gamma_i^k-\Gamma(\Gamma_i^j),\quad \Gamma_{ij}^k=\frac{\partial \Gamma_i^k}{\partial \dot{x}^j}=-\frac{1}{2}\frac{\partial F^k}{\partial \dot{x}^i \partial \dot{x}^j},
\end{equation*}
\begin{equation*}
    R_{ij}^k=-\frac{\partial \Gamma_j^k}{\partial x^i}+\frac{\partial \Gamma_i^k}{\partial x^j}+\Gamma_i^l\Gamma_{lj}^k-\Gamma_j^l \Gamma_{li}^k.
\end{equation*}
In particular, $\Gamma_{ij}^k=\Gamma_{ji}^k$ and $R_{ij}^k=-R_{ji}^k$.



\section{The prolongation Lie algebroid}\label{sec:pullback}

In this section we assume that we have a second fiber bundle at our disposal, and we show that we can extend a given Lie algebroid to one over the total manifold of that fiber bundle. This construction lies at the basis of the concepts we will introduce in the next section.

Let $p\colon \overline{M}\to M$ be a fiber bundle and let $\tau\colon A\to M$ be a Lie algebroid with anchor $\rho$ and bracket $\llbracket \cdot,\cdot\rrbracket^A$.  We consider the manifold
\begin{equation*}
    \overline{A}\coloneqq \rho^*\mathrm{T}\overline M  = \{ ( a_m, {\bar v}_{\bar m})\in  A\times \mathrm{T}\overline{M} \mid \rho(a_m) = \mathrm{T}p({\bar v}_{\bar m})\}.
\end{equation*}
By construction $p(\bar m) = m$.
 
The projection $\bar\tau\colon \overline{A}\to \overline{M}, (a_m, {\bar v}_{\bar m}) \mapsto \bar m$ gives it the structure of a vector bundle. This bundle is called \emph{the $p$-prolongation bundle of $A$}. It is well known (see \cite{deleon2005survey,martinez2002prolongation} for details) that the prolongation bundle is again a Lie algebroid, with the second projection serving as the anchor map for $\overline{A}$,
\begin{equation}\label{eqn:anchorprolongation}
    \bar{\rho}\colon \overline A \longrightarrow \mathrm{T}\overline M,\quad (a_m, {\bar v}_{\bar m}) \longmapsto {\bar v}_{\bar m}.
\end{equation}
We will use $P$ for the first projection, i.e.,
\begin{equation*}
    P\colon \overline A \longrightarrow A,\quad ( a_m, {\bar v}_{\bar m}) \longmapsto  a_m.
\end{equation*}
Then, the following diagram commutes:
\begin{equation*}\label{eqn:cdtwo}
\begin{CD}
\overline{A}@>\bar\rho>>\mathrm{T}\overline M\\
@V P VV @VV\mathrm{T}p V\\
A@>>\rho>\mathrm{T}M.\\
\end{CD}
\end{equation*}
To write an expression for its Lie algebroid bracket, we first need to explain what projectable sections of $\bar{\tau}$ are. In general a section of $\bar{\tau}$ is of the form $(\sigma,X)$, where $\sigma$ is a section of $\tau$ \emph{along $p$} and $X$ is a vector field on $\overline M$. If $\sigma$ is in fact a section of $\tau$ (i.e., a `basic' section of $\tau$ along $p$), we say that $(\sigma,X)$ is \emph{projectable}. To define a Lie algebroid bracket on $\overline A$ it is enough to state how it acts on two projectable sections, and then extend it to all sections by imposing the Leibniz identity. Here, we can set
\begin{equation}\label{eq:defprolbracket}
    \llbracket(\sigma_1,X_1),(\sigma_2,X_2)\rrbracket (\bar m)\coloneqq \left(\llbracket\sigma_1,\sigma_2\rrbracket^A(m), [X_1,X_2](\bar m)\right).
\end{equation}

\begin{definition} \label{def:prol}
We call $\overline A = \rho^*\mathrm{T}\overline M\to \overline{M}$, together with its anchor \eqref{eqn:anchorprolongation} and its bracket defined on projectable sections by \eqref{eq:defprolbracket} the \emph{$p$-prolongation Lie algebroid of $A$}.
\end{definition}

We denote the local expression for the projection $p\colon \overline{M}\to M$ by $(x^i,y^{\mu})\mapsto (x^i)$. We will assume that $\{e_{\alpha}\}$ is a local basis of sections of $A$, and that fiber coordinates $w^\alpha$ with respect to this basis are such that $\tau\colon A\to M$, $(x^i,w^{\alpha})\mapsto (x^i)$. Then, the following are (projectable) sections of $\overline{A}$: 
\begin{equation*} \label{eq:basis1}
    \bar{e}_{\alpha} \colon (x^i,y^{\mu})\longmapsto \left(x^i,y^{\mu},e_{\alpha}(x),\rho^i_\alpha \left.\frac{\partial}{\partial x^i}\right|_{(x,y)}\right),
\end{equation*}
\begin{equation*} \label{eq:basis2}
    \overline{E}_{\mu}\colon (x^i,y^{\mu})\longmapsto \left(x^i,y^{\mu},0,\frac{\partial}{\partial y^{\mu}}\Big|_{(x,y)}\right).
\end{equation*}
Together, the sections $\{\bar{e}_{\alpha},\overline{E}_{\mu}\}$ form a local basis of sections of $\overline{A}$. Any vector ${\bar a}_{\bar m}$ in ${\overline A}_{\bar m}$ can be decomposed as
\begin{equation*}
    {\bar a}_{\bar m} = w^\alpha {\bar e}_\alpha(\bar m)+ v^\mu {\overline{E}_\mu}(\bar m).
\end{equation*}
With the help of these fiber coordinates, we may write the projection $\bar\tau\colon \overline A \to \overline M$ locally as
\begin{equation*}
    (x^i,y^{\mu},w^{\alpha},v^{\mu})\longmapsto (x^i,y^{\mu}).
\end{equation*}
The Lie algebroid bracket relations of the basis are
\begin{equation*}\label{eqn:brackets}
    \llbracket \bar{e}_{\alpha},\bar{e}_{\beta}\rrbracket=L_{\alpha\beta}^\gamma {\bar e}_\gamma,\qquad \llbracket \bar{e}_{\alpha},\overline{E}_{\mu}\rrbracket=0,\qquad \llbracket \overline{E}_{\mu},\overline{E}_{\nu}\rrbracket=0,
\end{equation*}
and the anchor map is 
\begin{equation*} 
    \bar{\rho}(\bar{e}_{\alpha}) = \rho^i_\alpha \frac{\partial}{\partial x^i},\qquad \bar{\rho}(\overline{E}_{\mu})=\frac{\partial}{\partial y^{\mu}}.
\end{equation*}
We can also write this as
\begin{equation*}\label{defrhobar}
    \bar{\rho}\colon \overline{A}\longrightarrow \mathrm{T}\overline{M},\quad (x^i,y^{\mu},w^{\alpha},v^{\mu})\longmapsto (x^i,y^{\mu},\dot{x}^i=\rho_{\alpha}^i w^{\alpha}, v^\mu).
\end{equation*}

Let $\bar{\delta}$ denote the exterior derivative operator on the algebroid $\overline{A}$. Then a simple computation shows that for $\bar{f}\in C^{\infty}(\overline{M})$,
\begin{equation*}
    \bar{\delta} \bar{f}=\rho_{\alpha}^i \frac{\partial \bar{f}}{\partial x^i} \bar{e}^{\alpha}+ \frac{\partial \bar{f}}{\partial y^{\mu}}\overline{E}^{\mu},
\end{equation*}
and  in particular $\bar{\delta} x^i=\rho_{\alpha}^i\bar{e}^{\alpha}$ and $\bar{\delta}y^\mu=   \overline{E}^{\mu}$. If $\{\bar{e}^{\alpha},\overline{E}^{\mu}\}$ is the dual basis of $\{\bar{e}_{\alpha},\overline{E}_{\mu}\}$, then
\begin{equation*}
    \bar{\delta} \bar{e}^{\alpha}=-\frac{1}{2}L_{\beta \gamma}^{\alpha}\bar{e}^{\beta}\wedge \bar{e}^{\gamma},\quad \bar{\delta} \overline{E}^{\mu}=0.
\end{equation*}

Recall that  $P\colon \overline{A}\to A$ is the map $( a_m, {\bar v}_{\bar m}) \mapsto  a_m$. In local coordinates, it is given by  $(x^i,y^{\mu},w^{\alpha},v^{\mu})\mapsto (x^i,w^{\alpha})$. It is the projection of a fiber bundle since $\tau$ is a smooth map and $p$ is the projection of a fiber bundle. The following diagram commutes:
\begin{equation}\label{eqn:cd}
\begin{CD}
\overline{A}@>P>>A\\
@V{\bar{\tau}}VV @VV\tau V\\
\overline{M}@>>p>M.\\
\end{CD}
\end{equation}
With the help of $P$, we can identify any 1-form $\theta$ on $A$ with a unique 1-form $\bar\theta$ on $\overline A$. Indeed, for any $\bar m \in \overline M$ and $\bar\sigma\in\Gamma(\overline A)$, we can set
\begin{equation*}
    \bar\theta_{\bar m}\Big(\bar\sigma(\bar m)\Big)\coloneqq \theta_{p(\bar m)}\Big(P(\bar\sigma(\bar m))\Big).
\end{equation*}
Using this,  we may identify $e^\alpha$ and ${\bar e}^\alpha$ as either forms on $A$ or $\overline A$, respectively.

\begin{example} \label{eg:example1}
Let $A$ be the tangent bundle $\mathrm{T}M$ over $M$. Since $\rho=\rm{id}$ it is clear that $\overline A=\rho^*\mathrm{T}\overline M = \mathrm{T}\overline M$. In what follows, we will refer to this example as the ``standard case'', when compared to the existing literature on EDS on manifolds \cite{bryant1991exterior,ivey2016cartan}.

We can learn a useful fact about $\overline{A}$, in general, by examining this example in some further detail.
By local triviality of the tangent bundle, the tangent map $(\mathrm{T}p)_{\bar{m}}\colon \mathrm{T}_{\bar{m}} \overline{M}\to \mathrm{T}_{p({\bar{m}})} M$ of the projection $p$ is surjective for all ${\bar{m}}\in \overline{M}$. We therefore have a short exact sequence of vector spaces
\begin{equation*}
    \begin{CD}
    0@>>>\ker((\mathrm{T}p)_{\bar{m}})@>>>\mathrm{T}_{\bar{m}} \overline{M}@>(\mathrm{T}p)_{\bar{m}}>>\mathrm{T}_{p(\bar{m})}M@>>>0.
    \end{CD}
\end{equation*}
Now note that $\mathrm{T}p$ induces a bundle map $\widetilde{\mathrm{T}p}\colon \mathrm{T}\overline{M}\to p^{\ast} (\mathrm{T}M)$ over $\overline{M}$ via
\begin{equation*}
    v_{\bar{m}}\longmapsto (p(\bar{m}),\mathrm{T}p(v_{\bar{m}})),
\end{equation*}
where $v_{\bar{m}}\in \mathrm{T}_{\bar{m}} \overline{M}$. This map is also surjective because it maps the fiber $\mathrm{T}_{\bar{m}}\overline{M}$ onto the fiber $(p^{\ast} (\mathrm{T}M))_{\bar{m}}\simeq \mathrm{T}_{p(\bar{m})} M$. Since the kernel of this map is precisely the vertical bundle $\mathrm{V}p$, we also have a short exact sequence of vector bundles over $\overline{M}$ \cite{greub1973derham,tu2017connections}
\begin{equation*}
    \begin{CD}
    0@>>>\mathrm{V}p@>>>\mathrm{T} \overline{M}@>\widetilde{\mathrm{T}p}>>p^{\ast}(\mathrm{T}M)@>>>0.
    \end{CD}
\end{equation*}
When it is clear from context, we will from now on denote $\widetilde{\mathrm{T}p}$ by $\mathrm{T}p$. An \emph{Ehresmann connection on $p$} is a splitting $h\colon p^*\mathrm{T}M \to \mathrm{T}\overline M$ of this short exact sequence. The map $h$ is often called a \emph{horizontal lift}. Its defining property is that it satisfies $\mathrm{T}p(h(\bar m,v_m)) = v_m$. Since all short exact sequences of real vector bundles over smooth manifolds split, it follows that we may identify
\begin{equation*}
    \mathrm{T}\overline{M}\simeq p^{\ast}(\mathrm{T}M)\oplus_{\overline M} \mathrm{V}p.
\end{equation*}
\end{example}

We show below that the splitting property we've discovered in the example, after using an Ehresmann connection, also works in the case when $A$ is a general Lie algebroid. This gives us a second interpretation of the prolongation bundle. Let $p^{\ast} A\to \overline{M}$ be the pullback bundle of $\tau$ by $p$.

\begin{proposition} \label{newprop1} Using an Ehresmann connection $h$ on $p$, we can split the $p$-prolongation of $A$,
\begin{equation*}
    \overline A  =\rho^*\mathrm{T}\overline M \simeq   p^{\ast}A\oplus_{\overline{M}} \mathrm{V}p.
\end{equation*}
\end{proposition}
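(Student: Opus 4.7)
The plan is to construct an explicit fiberwise isomorphism $\Phi\colon \overline A \to p^\ast A\oplus_{\overline M}\mathrm{V}p$ that uses the Ehresmann connection $h$ to decompose the second component of a pair $(a_m,\bar v_{\bar m})\in\overline A$ into a horizontal and vertical part. The point is that the constraint $\rho(a_m)=\mathrm{T}p(\bar v_{\bar m})$ is exactly what is needed to express the horizontal part of $\bar v_{\bar m}$ purely in terms of $a_m$, so that the remaining information is a vertical vector.

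First, I would recall from \Cref{eg:example1} that $h\colon p^\ast\mathrm{T}M\to \mathrm{T}\overline M$ satisfies $\mathrm{T}p\circ h=\mathrm{id}_{p^\ast\mathrm{T}M}$, which yields the splitting $\mathrm{T}\overline M\simeq p^\ast\mathrm{T}M\oplus_{\overline M}\mathrm{V}p$ with projections $\bar v_{\bar m}\mapsto (\bar m,\mathrm{T}p(\bar v_{\bar m}))$ and $\bar v_{\bar m}\mapsto \bar v_{\bar m}-h(\bar m,\mathrm{T}p(\bar v_{\bar m}))$. Next, I would define
\begin{equation*}
\Phi\colon \overline A\longrightarrow p^\ast A\oplus_{\overline M}\mathrm{V}p,\quad (a_m,\bar v_{\bar m})\longmapsto \bigl((\bar m,a_m),\,\bar v_{\bar m}-h(\bar m,\rho(a_m))\bigr).
\end{equation*}
Using the defining relation $\rho(a_m)=\mathrm{T}p(\bar v_{\bar m})$ on $\overline A$, the expression $\bar v_{\bar m}-h(\bar m,\rho(a_m))$ equals $\bar v_{\bar m}-h(\bar m,\mathrm{T}p(\bar v_{\bar m}))$, which indeed lies in $\mathrm{V}_{\bar m}p$ since $\mathrm{T}p$ annihilates it. Thus $\Phi$ is well-defined, and it is manifestly linear on each fiber over $\overline M$.

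For the inverse, given $((\bar m,a_m),w_{\bar m})\in (p^\ast A\oplus_{\overline M}\mathrm{V}p)_{\bar m}$, set
\begin{equation*}
\Psi\bigl((\bar m,a_m),w_{\bar m}\bigr)\coloneqq \bigl(a_m,\,h(\bar m,\rho(a_m))+w_{\bar m}\bigr).
\end{equation*}
One verifies that $\mathrm{T}p\bigl(h(\bar m,\rho(a_m))+w_{\bar m}\bigr)=\rho(a_m)+0=\rho(a_m)$, so $\Psi$ lands in $\overline A$. A direct check shows $\Phi\circ\Psi=\mathrm{id}$ and $\Psi\circ\Phi=\mathrm{id}$, so $\Phi$ is a vector bundle isomorphism over $\overline M$.

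There is no significant obstacle here, since once the definitions are unpacked the argument is a formal consequence of the horizontal/vertical splitting of $\mathrm{T}\overline M$ combined with the anchor compatibility condition defining $\overline A$. The only mild subtlety to emphasise is that the identification depends on the chosen connection $h$: under $\Phi$, the summand $p^\ast A$ corresponds to the horizontal lift $\bar a\mapsto (a_m,h(\bar m,\rho(a_m)))$ of sections of $p^\ast A$, while $\mathrm{V}p$ is identified with the kernel of the first projection $P$ restricted to $\overline A$, and a different connection would yield a different horizontal complement inside $\overline A$.
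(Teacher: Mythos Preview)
Your proof is correct and follows essentially the same approach as the paper. The paper packages the argument as a short exact sequence $0\to\mathrm{V}p\to\overline A\to p^\ast A\to 0$ split by $\bar h(\bar m,a_m)=(a_m,h(\bar m,\rho(a_m)))$, but this $\bar h$ is exactly your $\Psi$ restricted to $p^\ast A$, and the resulting isomorphism (which the paper calls $\alpha_h$ just after the proof) coincides with your $\Phi$.
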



\begin{proof}
Consider the projection $\overline A \to p^* A, (a_m,{\bar v}_{\bar m})\mapsto (\bar m, a_m)$. By construction, a pair   lies in its kernel when it is of the form $(0,{\bar v}_{\bar m})$ with ${\bar v}_{\bar m}\in \mathrm{V}p$. In what follows we will simply write $\mathrm{V}p$ for this kernel.
We have therefore a short exact sequence of vector bundles
\begin{equation*}
    \begin{CD}
    0@>>>\mathrm{V}p@>>> \overline{A}@>>>p^{\ast}A @>>>0.
    \end{CD}
\end{equation*}
Using the Ehresmann connection $h\colon p^*\mathrm{T}M \to \mathrm{T}\overline M$, we define
\begin{equation*}
    \bar h\colon p^*A \longrightarrow \overline A,\quad (\bar m,a_m) \longmapsto (a_m, h(\bar m, \rho(a_m))).
\end{equation*}
This does indeed define a pair  in $\overline A$  because by definition $\mathrm{T}p( h(\bar m, \rho(a_m)))=\rho(a_m)$. Since $ h(\bar m, \rho(a_m))$ is a vector over $\bar m$, and since the first element of the pair is simply $a_m$, it is clear that $\bar h$ is, in fact, a splitting of the above sequence. We may therefore conclude that $\overline A \simeq  p^{\ast}A\oplus_{\overline{M}} \mathrm{V}p.$ \end{proof}

We can make the identification we have discovered in \Cref{newprop1} a bit more explicit. Let us denote $\widehat A\coloneqq p^{\ast}A\oplus_{\overline{M}} \mathrm{V}p$. If $\omega_{\bar m}\colon \mathrm{T}_{\bar m}{\overline M} \to \mathrm{V}_{\bar m}p, {\bar v}_{\bar m} \mapsto {\bar v}_{\bar m} - h(\bar m, \mathrm{T}p ({\bar v}_{\bar m}))$ is the corresponding left splitting of  $ h$, the map
\begin{equation*}
    \alpha_h\colon  \overline A \longrightarrow \widehat A,\quad (a_m, {\bar v}_{\bar m})\longmapsto  a_m\oplus \omega({\bar v}_{\bar m})  
\end{equation*}
is a bundle isomorphism with inverse
\begin{equation*}
    {\alpha}_h^{-1}\colon \widehat A \longrightarrow \overline A,\quad   a_m \oplus {\bar u}_{\bar m} \longmapsto (a_m,  h(\bar m, \rho(a_m)) + {\bar u}_{\bar m}).
\end{equation*}
This means that, with the help of an Ehresmann connection $ h$, we can define also a Lie algebroid structure on $\widehat A$ with anchor map
\begin{equation*}
    \hat\rho = \bar\rho \circ \alpha_h^{-1}\colon \widehat A \longrightarrow \mathrm{T}\overline M,\quad a_m \oplus {\bar u}_{\bar m} \longmapsto   h(\bar m, \rho(a_m)) + {\bar u}_{\bar m}
\end{equation*}
and bracket
\begin{equation*}
    \llbracket{\hat\sigma}_1, {\hat\sigma}_2\rrbracket^{\widehat A} = \alpha_h \left( \llbracket  \alpha_h^{-1}({\hat\sigma}_1), \alpha_h^{-1}({\hat\sigma}_2)\rrbracket \right).
\end{equation*}

As before, sections of the type $\sigma\oplus 0$ (with $\sigma\in\Gamma(A)$) and $0\oplus V$ (with $V$ a vertical vector field on $\overline M$) are called projectable sections of $\widehat A$. 
In this notation
\begin{equation*}
    \alpha_h(\sigma,X)=\sigma \oplus \omega(X),\qquad \alpha_h^{-1}(\sigma \oplus V) = (\sigma,  h(\rho(\sigma))+V).
\end{equation*}
The anchor map $\hat\rho\colon \widehat A \to \mathrm{T}\overline M$  then has the property 
\begin{equation*}
    \hat\rho(\sigma\oplus 0) =  h(\rho(\sigma)), \qquad \hat\rho(0\oplus V) = V,
\end{equation*}

In what follows, we need the \emph{curvature} $K$ of the Ehresmann connection: For $X_i \in {\mathcal X}(M)$, it is defined by
\begin{equation*}
    K(X_1,X_2)\coloneqq [ h(X_1), h(X_2)]- h[X_1,X_2] =  \omega\left([ h(X_1), h(X_2)]\right).
\end{equation*}
With that, one may verify that the Lie algebroid brackets on projectable sections are, in fact, given by
\begin{equation}\label{eqn:extensionbracket1}
    \llbracket \sigma_1\oplus 0, \sigma_2\oplus 0\rrbracket^{\widehat A} = \llbracket\sigma_1,\sigma_2\rrbracket^A \oplus K(\rho(\sigma_1),\rho(\sigma_2)),\quad \llbracket \sigma_1\oplus 0, 0\oplus V_2\rrbracket^{\widehat A} = 0\oplus[ h(\rho(\sigma_1)),V_2],
\end{equation} 
\begin{equation}\label{eqn:extensionbracket2}
    \llbracket 0\oplus V_1, 0\oplus V_2\rrbracket^{\widehat A} = 0\oplus[V_1,V_2].
\end{equation}

We check that the above bracket relations are compatible with the anchor map. We consider 3 cases. First,
\begin{equation*}
    \hat\rho \llbracket \sigma_1\oplus 0, \sigma_2\oplus 0\rrbracket^{\widehat A} =  h(\rho\llbracket\sigma_1,\sigma_2\rrbracket^A)+ K(\rho(\sigma_1),\rho(\sigma_2)),\quad [\hat\rho(\sigma_1\oplus 0), \hat\rho(\sigma_2\oplus 0)] = [ h(\rho(\sigma_1)), h(\rho(\sigma_2))].
\end{equation*}
This is in agreement, since 
\begin{equation*}
    K(\rho(\sigma_1),\rho(\sigma_2)) = [ h(\rho(\sigma_1)), h(\rho(\sigma_2))]- h[\rho(\sigma_1),\rho(\sigma_2)] = [ h(\rho(\sigma_1)), h(\rho(\sigma_2))]- h(\rho\llbracket \sigma_1,\sigma_2\rrbracket^A),
\end{equation*}
because $\rho$ is a Lie algebra morphism.

Second,
\begin{equation*}
    \hat\rho \llbracket \sigma_1\oplus 0, 0\oplus V_2\rrbracket^{\widehat A} = [ h(\rho(\sigma_1)),V_2]=[\hat\rho(\sigma_1\oplus 0), \hat\rho(0\oplus V_2)],
\end{equation*}
and also the third holds easily. We conclude that we have $\hat{\rho}(\llbracket {\hat\sigma}_1, {\hat\sigma}_2\rrbracket^{\widehat A})=[\hat{\rho}({\hat\sigma}_1),\hat{\rho}({\hat\sigma}_2)]$ for ${\hat\sigma}_1,{\hat\sigma}_2\in \Gamma(\widehat{A})$, as required.

If we use coordinates $(x^i,y^\mu)$ on $\overline M$, then there exist functions $A^\mu_i(x,y)$ such that  
\begin{equation*}
    h_i\coloneqq h\left(\frac{\partial}{\partial x^i}\right) = \frac{\partial}{\partial x^i} + A^\mu_i \frac{\partial}{\partial y^\mu}.
\end{equation*}
With this, $[h_i,h_j] = K^\mu_{ij}\frac{\partial}{\partial y^\mu}$ and the components of the curvature are $K^\mu_{ij}=h_i(A^\mu_j)-h_j(A^\mu_i)$.

If we now introduce ${\hat e}_\alpha = e_\alpha\oplus 0$ and ${\widehat E}_\mu = 0\oplus \frac{\partial}{\partial y^\mu}$, then the anchor satisfies $\hat{\rho}(\hat{e}_{\alpha}) =\rho_{\alpha}^ih_i$ and $\hat{\rho}(\widehat{E}_{\mu})=\frac{\partial}{\partial y^{\mu}}$, and the expressions in \eqref{eqn:extensionbracket1} and \eqref{eqn:extensionbracket2} reduce to
\begin{equation*}\label{eqn:bracketshat}
    \llbracket \hat{e}_{\alpha},\hat{e}_{\beta}\rrbracket^{\widehat A}=L_{\alpha\beta}^\gamma {\hat e}_\gamma + \ M^\mu_{\alpha\beta}{\widehat E}_\mu,\qquad \llbracket \hat{e}_{\alpha},\widehat{E}_{\mu}\rrbracket^{\widehat A}= N^\nu_{\alpha\mu}{\widehat E}_\nu,\qquad \llbracket \widehat{E}_{\mu},\widehat{E}_{\nu}\rrbracket^{\widehat A}=0,
\end{equation*}
where $M^\mu_{\alpha\beta}\coloneqq \rho^i_\alpha\rho^j_\beta K^\mu_{ij}$ and $N^\nu_{\alpha\mu}\coloneqq - \rho^i_\alpha\frac{\partial A^\nu_i}{\partial y^\mu}$.


We end this section with a remark that is important for many of the applications that follow. When the fiber bundle $p\colon \overline M \to M$ is a trivial bundle, i.e., when $\overline M = M\times N$, then $\overline A = \rho^* \mathrm{T}\overline M \to \overline M$ can be identified with the bundle $A \times \mathrm{T}N \to M\times N$. On the other hand, in that case, we may always take $h$ to be a trivial connection, i.e., $A^\mu_i = 0$ and thus also $M^\mu_{\alpha\beta} = N^\mu_{\alpha\nu}=0$. This simplifies many of the above formulas. In particular, ${\widehat A}=p^{\ast}A\oplus_{\overline{M}} \mathrm{V}p \to \overline M$ becomes $p^{\ast}A\oplus_{\overline{M}}p^*\mathrm{T}N\simeq A\times \mathrm{T}N \to M \times N$ again.

\section{EDS and integral manifolds for Lie algebroids}\label{sec:EDSintman}
The goal of this section is to generalize the definitions of \cite{bryant1991exterior,ivey2016cartan} from the standard case to the case with Lie algebroids. From now on, we will make no notational distinction between the exterior derivative operator on $A$ (as defined in \Cref{sec:preliminaries}) and $\overline{A}$ (as given in \Cref{sec:pullback}), and we will simply denote the operator by $\delta$. In this section, we will exclusively deal with the exterior derivative on the algebroid $\overline{A}$.

Now, let $i\colon M\to \overline{M}$ be a section of $p$. Such sections are of the form $(x^i)\mapsto (x^i,\bar{y}^{\mu}(x))$ for some smooth $\bar{y}^{\mu}(x)$. With the help of $i$, we now wish to construct a section of $P$. 

\begin{definition}
For a section $i$ of $p$, we call the section $I$ of $P$, defined by
\begin{equation*}
    I(a_m)\coloneqq ( a_m, \mathrm{T}i(\rho(a_m)))
\end{equation*}
the \emph{$i$-induced section}. 
\end{definition}
The image of $I$ lies, indeed, in $\overline A= \rho^* \mathrm{T}{\overline M}$, because $i$ is a section of $p$ and
\begin{equation*}
    \mathrm{T}p( \mathrm{T}i(\rho(a_m))) = \mathrm{T}({\rm{id}}) (\rho(a_m)) = \rho(a_m).
\end{equation*}

\begin{proposition} \label{prop2}
The $i$-induced section $I$ is uniquely determined by the properties $P\circ I = {\rm id}$, $\bar{\rho}\circ I=\mathrm{T}i\circ \rho$ and $\bar{\tau}\circ I=i\circ \tau$.
\end{proposition}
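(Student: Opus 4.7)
The plan is to split the proof into two easy parts: (i) check that the $I$ defined in the statement does indeed satisfy the three listed properties, and (ii) argue that any section $I'\colon A \to \overline A$ of $P$ satisfying all three properties must coincide with $I$ pointwise. Since an element of $\overline A$ is by construction a pair $(a_m, {\bar v}_{\bar m})\in A\times \mathrm{T}\overline M$, it is completely determined by its two components, and these two components are precisely what $P$ and $\bar\rho$ pick out. So the uniqueness will come essentially for free once existence is verified.

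For (i), I would simply unpack definitions. From $I(a_m) = (a_m, \mathrm{T}i(\rho(a_m)))$ one has immediately $P(I(a_m)) = a_m$ and $\bar\rho(I(a_m)) = \mathrm{T}i(\rho(a_m))$, giving the first two identities. For the third, note that $\mathrm{T}i(\rho(a_m))$ is a vector based at $i(m) = i(\tau(a_m))$, so $\bar\tau(I(a_m)) = i(\tau(a_m))$. Before declaring this done, I would also record the single non-trivial check omitted in the statement, namely that $I(a_m)$ actually lies in $\overline A = \rho^*\mathrm{T}\overline M$: this amounts to $\mathrm{T}p(\mathrm{T}i(\rho(a_m))) = \rho(a_m)$, which is immediate from $p\circ i = \mathrm{id}_M$ and functoriality of $\mathrm{T}$.

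For (ii), suppose $I'\colon A \to \overline A$ also satisfies $P\circ I' = \mathrm{id}$ and $\bar\rho\circ I' = \mathrm{T}i\circ\rho$. Writing $I'(a_m) = (b_m, {\bar v}_{\bar m})$, the first condition forces $b_m = a_m$, and the second forces ${\bar v}_{\bar m} = \mathrm{T}i(\rho(a_m))$. Therefore $I'(a_m) = I(a_m)$ for every $a_m$, proving uniqueness.

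There is no real obstacle here: the one small thing to verify carefully is that the third listed property, $\bar\tau \circ I = i\circ\tau$, is compatible with (in fact redundant given) the first two, because $\bar\rho$ is a bundle map over $\overline M$, so the base point of $\bar\rho(I(a_m))$ is $\bar\tau(I(a_m))$, and this base point equals $i(m)$. I would mention this briefly so that the statement is not misread as claiming the three conditions are independent.
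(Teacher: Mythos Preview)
Your proof is correct. It differs from the paper's own argument, which simply writes down the local coordinate expression
\[
(x^i,w^{\alpha})\longmapsto \left(x^i,\;y^{\mu}=\bar{y}^{\mu}(x),\;w^{\alpha},\;v^{\mu}=\rho_{\alpha}^i \frac{\partial \bar{y}^{\mu}}{\partial x^i} w^{\alpha}\right)
\]
and asserts that the statement follows from it. Your approach is intrinsic: you exploit directly that an element of $\overline A=\rho^*\mathrm{T}\overline M$ is a pair whose components are recovered exactly by $P$ and $\bar\rho$, so the first two conditions already pin $I$ down pointwise, and the third is a consequence. This makes the uniqueness genuinely transparent and avoids coordinates altogether. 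The paper's coordinate version, on the other hand, has the advantage of producing the explicit formula that is reused immediately afterwards to compute $I^{\ast}\bar e^{\alpha}$ and $I^{\ast}\overline E^{\mu}$; your argument would need that formula added as a separate remark if you want those pullback computations later. Your observation that $\bar\tau\circ I=i\circ\tau$ is redundant given the other two is also a nice clarification not made explicit in the paper.
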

\begin{proof} One easily verifies that, in local coordinates, $I$ is given by
\begin{equation*}
    (x^i,w^{\alpha})\longmapsto \left(x^i,y^{\mu}=\bar{y}^{\mu}(x),w^{\alpha},v^{\mu}=\rho_{\alpha}^i \frac{\partial \bar{y}^{\mu}}{\partial x^i} w^{\alpha}\right),
\end{equation*}
from which the statement easily follows.
\end{proof}



We have introduced $I$ here, because we can use it to define a pullback operator $I^{\ast}\colon \Lambda^q(\overline{A})\to \Lambda^q(A)$, as follows. On functions $\bar f\in C^{\infty}(\overline{M})$ and one-forms $\bar\theta$ on $\overline{A}$:
\begin{equation*}
    (I^{\ast}{\bar f})(m) = \bar f(i(m)),\qquad (I^{\ast}{\bar \theta})_m(a_m)= \bar\theta_{i(m)}(I(a_m)).
\end{equation*}
The local expressions are then
\begin{equation*}
    I^{\ast}x^i=x^i,\quad I^{\ast}y^{\mu}=\bar{y}^{\mu}(x),
\end{equation*}
\begin{equation*}
    I^{\ast}\bar{e}^{\alpha}=e^{\alpha}\in \Lambda^1(A),\quad I^{\ast} \overline{E}^{\mu}=\rho_{\alpha}^i\frac{\partial \bar{y}^{\mu}}{\partial x^i}e^{\alpha}\in \Lambda^1(A).
\end{equation*}
We can extend this to forms of arbitrary degree by imposing linearity and the rule $I^{\ast}(\omega\wedge \theta)=I^{\ast}\omega\wedge I^{\ast}\theta$ for all $\omega\in \Lambda^q(\overline{A})$ and $\theta\in \Lambda^r(\overline{A})$.


In the next proposition we use the definition of a Lie algebroid morphism between Lie algebroids over different base manifolds, as found in e.g., \cite{barbero2016invproblemliealgebroids,deleon2005survey}.

\begin{proposition}
The $i$-induced section $I\colon A \to \overline A$ is a Lie algebroid morphism, in the sense that $\delta\circ I^* = I^* \circ \delta$.
\end{proposition}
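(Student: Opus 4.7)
The plan is to verify $\delta \circ I^* = I^* \circ \delta$ on a set of generators of the exterior algebra and then extend it to all forms by multiplicativity. Since $I^*$ is by construction an algebra morphism with respect to $\wedge$, and since both exterior derivatives are graded derivations of square zero, a short induction on degree reduces the statement to two cases: $0$-forms (i.e., functions on $\overline M$) and $1$-forms, and in the latter case it is enough to check the two types of basis elements $\bar e^\alpha$ and $\overline E^\mu$.

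The function case is a straightforward consequence of the chain rule. For $\bar f \in C^{\infty}(\overline M)$, one has $I^*\bar f = \bar f(x, \bar y(x))$ and
\begin{equation*}
    \delta(I^*\bar f) = \rho^i_\alpha \frac{\partial}{\partial x^i}\bigl(\bar f(x,\bar y(x))\bigr)\, e^\alpha = \rho^i_\alpha\Bigl(\tfrac{\partial \bar f}{\partial x^i} + \tfrac{\partial \bar f}{\partial y^\mu}\tfrac{\partial \bar y^\mu}{\partial x^i}\Bigr) e^\alpha,
\end{equation*}
which is exactly $I^*\bigl(\tfrac{\partial \bar f}{\partial x^i}\rho^i_\alpha\bar e^\alpha + \tfrac{\partial \bar f}{\partial y^\mu}\overline E^\mu\bigr) = I^*\delta \bar f$. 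The case $\bar e^\alpha$ is equally immediate: both sides equal $-\tfrac12 L^\alpha_{\beta\gamma} e^\beta \wedge e^\gamma$, because $I^*\bar e^\alpha = e^\alpha$ and the structure functions of $A$ and $\overline A$ coincide.

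The genuinely non-trivial case is $\overline E^\mu$. Since $\delta \overline E^\mu = 0$, I must show that $\delta(I^*\overline E^\mu) = \delta\bigl(\rho^i_\alpha \tfrac{\partial \bar y^\mu}{\partial x^i} e^\alpha\bigr)$ vanishes. Expanding with the Leibniz rule splits this into (a) a second-derivative piece containing the factor $\rho^i_\alpha \rho^j_\beta \tfrac{\partial^2 \bar y^\mu}{\partial x^j \partial x^i} e^\beta \wedge e^\alpha$, which is annihilated by the antisymmetry of $\wedge$ against the symmetry of mixed partials, (b) a derivative-of-$\rho$ piece $\tfrac{\partial \rho^i_\alpha}{\partial x^j}\tfrac{\partial \bar y^\mu}{\partial x^i}\rho^j_\beta e^\beta \wedge e^\alpha$, and (c) a structure-function piece $-\tfrac12 \rho^i_\alpha \tfrac{\partial \bar y^\mu}{\partial x^i} L^\alpha_{\beta\gamma} e^\beta \wedge e^\gamma$ coming from $\delta e^\alpha$.

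The main obstacle is showing that (b) and (c) cancel, and this is precisely where the anchor--bracket compatibility identity \eqref{eqn:anchorspecialidentity} is needed. Substituting $\rho^i_\alpha L^\alpha_{\beta\gamma} = \rho^j_\beta \tfrac{\partial \rho^i_\gamma}{\partial x^j} - \rho^k_\gamma \tfrac{\partial \rho^i_\beta}{\partial x^k}$ into (c) and using the antisymmetry of the wedge in $(\beta,\gamma)$, the two halves of the bracket term coalesce into a single term that exactly cancels (b) (after the harmless relabeling $\alpha \to \gamma$ in (b)). This finishes the check on basis $1$-forms, and the Leibniz induction announced at the start then upgrades the identity to all forms, completing the proof.
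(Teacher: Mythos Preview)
Your proof is correct and follows exactly the route the paper indicates: verify $\delta\circ I^* = I^*\circ\delta$ on the coordinate generators $\bar f$, $\bar e^\alpha$, $\overline E^\mu$ and extend by the Leibniz rule. The paper's own proof merely says ``one may easily verify from the coordinate expressions'' and lists the relevant structure equations; you have actually carried out that verification, and in particular you correctly isolate the only non-trivial step---the vanishing of $\delta(I^*\overline E^\mu)$---and identify the anchor--bracket compatibility \eqref{eqn:anchorspecialidentity} as the exact identity that makes pieces (b) and (c) cancel.
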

\begin{proof}


One may easily verify from the coordinate expressions that the property in the statement is satisfied, taking into account that, e.g., ${\delta} x^i=\rho_{\alpha}^i\bar{e}^{\alpha}$, $\delta y^\mu={\overline E}^\mu$,   ${\delta} \bar{e}^{\alpha}=-\frac{1}{2}L_{\beta \gamma}^{\alpha}\bar{e}^{\beta}\wedge \bar{e}^{\gamma}$ and ${\delta} \overline{E}^{\mu}=0$.
\end{proof}

\begin{definition}
A subspace $\mathcal{I}\subset \Lambda^{\ast}(\overline{A})$ is an \emph{algebraic ideal} if it is a direct sum of subspaces $\mathcal{I}^k\subset \Lambda^{\ast}(\overline{A})$ closed under the wedge product with arbitrary differential forms. In other words, it is a graded ideal of $\Lambda^{\ast}(\overline{A})$.
\end{definition}

\begin{definition}
A \emph{differential ideal} is an algebraic ideal $\mathcal{I}\subset \Lambda^{\ast}(\overline{A})$ such that $\delta \mathcal{I}\subset \mathcal{I}$.
\end{definition}

Given a set of forms of arbitrary degree $\{\omega^1,\dots,\omega^n\}\subset \Lambda^{\ast}(\overline{A})$, we denote by
\begin{equation*}
    \langle \omega^1,\dots,\omega^n\rangle_{\mathrm{alg}}\coloneqq \left\{\alpha^k\wedge \omega^k\mid \alpha^1,\dots,\alpha^n\in \Lambda^{\ast}(\overline{A})\right\}
\end{equation*}
the smallest algebraic ideal containing these forms, and by
\begin{equation*}
    \langle \omega^1,\dots,\omega^n\rangle\coloneqq \left\{\alpha^k\wedge \omega^k+\beta^k\wedge \delta \omega^k\mid \alpha^1,\dots,\alpha^n,\beta^1,\dots,\beta^n\in \Lambda^{\ast}(\overline{A})\right\}
\end{equation*}
the smallest differential ideal containing these forms. Note that if the algebraic ideal generated by $\{\omega^1,\dots,\omega^n\}\subset \Lambda^{\ast}(\overline{A})$ is in fact a differential ideal then $\langle \omega^1,\dots,\omega^n\rangle=\langle \omega^1,\dots,\omega^n\rangle_{\mathrm{alg}}$.

\begin{definition}
Let $i\colon M\to \overline{M}$ be a section of $p$. We say that $i$, or $i(M)\subset \overline{M}$ is an \emph{integral manifold} of a differential ideal $\mathcal{I}$ if for all $\theta\in \mathcal{I}\subset \Lambda^{\ast}(\overline{A})$, the equation $I^{\ast}\theta=0$ holds in $\Lambda^{\ast}(A)$.
\end{definition}

Our definition can be related to the standard definition of an integral manifold of a set of forms on a manifold, as it appears in e.g., \cite{ivey2016cartan}. 

\begin{definition} \cite{ivey2016cartan}
An \emph{exterior differential system with independence condition} on a manifold $\overline M$ consists of a differential ideal $\mathcal{I}\subset \Omega^{\ast}(\overline M)$ and a differential $n$-form $\Omega$ on $\overline{M}$. An \emph{integral manifold} of $(\mathcal{I},\Omega)$ is an $n$-dimensional submanifold $i\colon M\to \overline M$ such that $i^{\ast}\alpha=0$ for all $\alpha\in \mathcal{I}$, and $i^{\ast}\Omega\neq 0$ at each point of $M$. This last condition is called the \emph{independence condition.}
 \end{definition}

We can now relate it to our definition, in the following sense. If $p\colon \overline M \to M$ is a fiber bundle and $i\colon M \to \overline M$ is a section of $p$, then $i(M)$ is a submanifold of $\overline{M}$. We can take any volume form $\Omega_M$ on $M$ and set $\Omega\coloneqq p^{\ast} \Omega_M$. This is an $n$-form on $\overline M$ with the property that $i^{\ast} \Omega = (p\circ i)^{\ast} \Omega_M =\Omega_M \neq 0$. In this case, we take our Lie algebroids to be $A=\mathrm{T}M$ and then also $\overline{A}=\mathrm{T} \overline M$. In conclusion, the independence condition is implicit in our definition.  

The general definition of an integral manifold above does not make explicit use of a section of a fiber bundle. However, it depends on the choice of the form $\Omega$ in the following sense. If, for example on $\overline M = {\mathbb R}^3$, we consider $\Omega = \mathrm{d}x\wedge \mathrm{d}y$, then it follows from the independence condition that integral manifolds can be interpreted as maps $(x,y) \mapsto (x,y, {\bar z}(x,y))$, meaning that they are sections of the bundle projection $(x,y,z)\mapsto (x,y)$. But, if we take $\Omega = \mathrm{d}x\wedge \mathrm{d}z$, then integral manifolds become maps $(x,z) \mapsto (x,{\bar y}(x,z), z)$ and, therefore, now sections of the bundle projection $(x,y,z)\mapsto (x,z)$.

For this reason, we have opted to include the choice of a fiber bundle in our definition, over the choice of an independence condition.

The next example motivates and explains the importance of imposing that the ideal $\mathcal I$ is a differential ideal, when trying to find its integral manifolds.

\begin{example}\label{eg:oneforms}
Suppose that $\{\bar{e}^{\alpha},\overline{E}^{\mu}\}$ is the dual basis of $\{\bar{e}_{\alpha},\overline{E}_{\mu}\}$. Let $\mathcal{I}=\langle \sigma^1,\dots,\sigma^n\rangle$, where $\sigma^k\in \Lambda^1(\overline{A})$ is of the form $\sigma^k=\theta_{\alpha}^k \bar{e}^{\alpha}+\varpi_{\mu}^k \overline{E}^{\mu}$. For a section $i\colon M\to \overline M, (x^i) \mapsto (x^i, \bar{y}^\mu(x))$, one finds that
\begin{equation*}
    I^{\ast}\sigma^k=\theta_{\alpha}^k e^{\alpha}+\rho_{\alpha}^i\frac{\partial \bar{y}^{\mu}}{\partial x^i}\varpi_{\mu}^k e^{\alpha}.
\end{equation*}
Therefore, to find integral manifolds $i$ of $\mathcal{I}$, we need to solve the system of PDEs
\begin{equation}\label{eqn:PDEintmanifolds}
    \theta_{\alpha}^k(x,\bar{y})+\rho_{\alpha}^i \frac{\partial \bar{y}^{\mu}}{\partial x^i}\varpi_{\mu}^k(x,\bar{y})=0,
\end{equation}
for the functions $\bar{y}^{\mu}(x)$. 

Recall that the anchor map $\rho\colon A \to M$ defines a Lie algebra morphism. From this, we can see that the equations \eqref{eqn:PDEintmanifolds} are not independent from one another and we may therefore derive an obstruction to the existence of a solution ${\bar y}^\mu(x)$. Indeed, if we set, just for brevity, $X_\alpha = \rho^i_\alpha \frac{\partial}{\partial x^i}$ and $Y_\alpha=X_\alpha+ X_\alpha({\bar y}^\nu)\frac{\partial }{\partial y^\nu}$, then any solution of \Cref{eqn:PDEintmanifolds} also satisfies
\begin{equation*}
    X_\alpha (X_\beta (\bar{y}^\mu))\varpi^k_\mu = - {Y}_\alpha(\theta^k_\beta) - X_\beta(\bar{y}^\mu){Y}_\alpha(\varpi^k_\mu),
\end{equation*}
and likewise when $\alpha$ and $\beta$ are interchanged.  On the other hand, we also have
\begin{equation*}
    L_{\alpha\beta}^\gamma X_\gamma({\bar y}^\mu)\varpi^k_\mu = - L_{\alpha\beta}^\gamma \theta^k_\gamma.
\end{equation*}
In view of the identities \eqref{eqn:anchorspecialidentity}, or in the current notation, $[X_\alpha,X_\beta] = L_{\alpha\beta}^\gamma X_\gamma$, we need that a solution ${\bar y}^\mu(x)$ of \Cref{eqn:PDEintmanifolds} also should satisfy
\begin{equation} \label{eq:integ}
0= - Y_\alpha(\theta^k_\beta) - X_\beta(\bar{y}^\mu)Y_\alpha(\varpi^k_\mu) +  Y_\beta(\theta^k_\alpha) + X_\alpha(\bar{y}^\mu)Y_\beta(\varpi^k_\mu) +  L_{\alpha\beta}^\gamma \theta^k_\gamma. 
\end{equation}
This condition on a solution ${\bar y}^\mu(x)$ is not automatically satisfied for any given set of forms $\sigma^k$, and we call it the \emph{dependency condition}. 
When it is assumed that $\mathcal{I}$ forms a differential ideal, we can show that it is satisfied for any solution of \Cref{eqn:PDEintmanifolds}.

\begin{proposition} \label{prop:depcond}
When $\mathcal{I}=\langle \sigma^1,\dots,\sigma^n\rangle_{\mathrm{alg}}$ forms a differential ideal, the dependency condition \eqref{eq:integ} of \Cref{eqn:PDEintmanifolds} holds identically.
\end{proposition}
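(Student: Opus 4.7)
The plan is to exploit the Lie algebroid morphism property $I^* \circ \delta = \delta \circ I^*$ established in the preceding proposition, together with the differential ideal structure of $\mathcal{I}$. Since $\mathcal{I}$ is generated algebraically by the 1-forms $\sigma^1,\dots,\sigma^n$ and $\delta \sigma^k$ is a 2-form that lies in $\mathcal{I}$, a degree count gives the existence of 1-forms $\Lambda^k_l \in \Lambda^1(\overline{A})$ with $\delta \sigma^k = \Lambda^k_l \wedge \sigma^l$.

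I would then apply $I^*$ to this identity. Using that $I$ is a Lie algebroid morphism, the left-hand side becomes $I^* \delta \sigma^k = \delta(I^* \sigma^k)$, while the right-hand side becomes $I^* \Lambda^k_l \wedge I^* \sigma^l$. If $\bar{y}^\mu(x)$ satisfies the PDE \eqref{eqn:PDEintmanifolds}, then every $I^* \sigma^l$ vanishes in $\Lambda^1(A)$, so the right-hand side is zero; consequently $I^* \delta \sigma^k = 0$ as a 2-form on $A$.

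The remaining step is to expand $I^* \delta \sigma^k$ in coordinates and recognize its vanishing as the dependency condition. Applying the formulas $\delta \bar{f} = \rho^i_\gamma (\partial \bar{f}/\partial x^i)\bar{e}^\gamma + (\partial \bar{f}/\partial y^\mu)\overline{E}^\mu$, $\delta \bar{e}^\alpha = -\frac{1}{2} L^\alpha_{\beta\gamma}\bar{e}^\beta \wedge \bar{e}^\gamma$ and $\delta \overline{E}^\mu = 0$ to $\sigma^k = \theta^k_\alpha \bar{e}^\alpha + \varpi^k_\mu \overline{E}^\mu$, and then pulling back via $I^* \bar{e}^\alpha = e^\alpha$ and $I^* \overline{E}^\mu = X_\alpha(\bar{y}^\mu)e^\alpha$, the chain rule naturally groups the $\bar{e}^\gamma$ and $\overline{E}^\mu$ contributions into expressions involving $Y_\gamma = X_\gamma + X_\gamma(\bar{y}^\mu)\partial/\partial y^\mu$. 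Collecting the antisymmetric coefficient of $e^\alpha \wedge e^\beta$ in $I^* \delta \sigma^k$ should reproduce, up to an overall sign, precisely the left-hand side of \eqref{eq:integ}, so its vanishing is the dependency condition.

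The hard part will be the bookkeeping in this last step: tracking the antisymmetrization of $\rho^i_\gamma (\partial \theta^k_\alpha/\partial x^i)\bar{e}^\gamma \wedge \bar{e}^\alpha$ and the analogous $\varpi^k_\mu$ terms to produce the $Y_\alpha(\theta^k_\beta) - Y_\beta(\theta^k_\alpha)$ and $X_\alpha(\bar{y}^\mu)Y_\beta(\varpi^k_\mu) - X_\beta(\bar{y}^\mu)Y_\alpha(\varpi^k_\mu)$ combinations, and verifying that the structure-function term $-\frac{1}{2}\theta^k_\alpha L^\alpha_{\beta\gamma}\bar{e}^\beta \wedge \bar{e}^\gamma$ produces exactly the $L^\gamma_{\alpha\beta}\theta^k_\gamma$ contribution appearing in \eqref{eq:integ}.
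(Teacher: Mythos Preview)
Your proposal is correct and takes a genuinely different route from the paper's own argument. The paper expands $\delta\sigma^k$ and $\nu^k_l\wedge\sigma^l$ separately in the basis $\{\bar e^\alpha,\overline{E}^\mu\}$ and equates coefficients to extract three scalar identities (the relations among $\nu^k_{l\alpha},\nu^k_{l\mu},\theta^k_\alpha,\varpi^k_\mu$); it then substitutes those identities term by term into the right-hand side of \eqref{eq:integ} and simplifies, invoking the PDE only at the very end. You instead pull back the single 2-form identity $\delta\sigma^k=\Lambda^k_l\wedge\sigma^l$ through $I^*$, kill the right-hand side using $I^*\sigma^l=0$, and then recognise the $e^\alpha\wedge e^\beta$ coefficient of $I^*\delta\sigma^k$ as (minus) the dependency expression. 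Your route is cleaner and keeps the argument at the level of forms; the paper's route has the side benefit of producing the three explicit compatibility relations \eqref{eqn:integrability1}--\eqref{eqn:integrability3}, which are of independent interest.

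Two small remarks. First, the Lie algebroid morphism identity $I^*\delta=\delta I^*$ that you invoke is not actually needed for your argument: once $I^*\sigma^l=0$ you already have $I^*(\Lambda^k_l\wedge\sigma^l)=0$, hence $I^*\delta\sigma^k=0$, with no appeal to $\delta(I^*\sigma^k)$. (Indeed $\delta(I^*\sigma^k)=\delta 0=0$ is trivially true and carries no information.) Second, your ``hard part'' goes through exactly as you anticipate: after pulling back via $I^*\bar e^\alpha=e^\alpha$ and $I^*\overline{E}^\mu=X_\alpha(\bar y^\mu)e^\alpha$, the chain-rule combinations $X_\alpha+X_\alpha(\bar y^\mu)\partial/\partial y^\mu$ assemble precisely into $Y_\alpha$, and the antisymmetrised coefficient of $e^\alpha\wedge e^\beta$ in $I^*\delta\sigma^k$ is exactly the negative of the expression in \eqref{eq:integ}.
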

\begin{proof}
Suppose that $\langle \sigma^1,\dots,\sigma^n\rangle_{\mathrm{alg}}$ is a differential ideal. Then there exist one-forms $\nu_l^k=\nu_{l\alpha}^k \bar{e}^{\alpha}+\nu_{l\mu}^k \overline{E}^{\mu}\in \Lambda^1(\overline{A})$ such that $\delta \sigma^k=\nu_l^k\wedge \sigma^l$. Expanding this expression in terms of the basis $\{\bar{e}^{\alpha},\overline{E}^{\mu}\}$ gives
\begin{equation*}
    \delta \sigma^k=\nu_{l\alpha}^k \theta_{\beta}^l \bar{e}^{\alpha}\wedge \bar{e}^{\beta}+(\nu_{l\alpha}^k \varpi_{\mu}^l-\nu_{l\mu}^k \theta_{\alpha}^l) \bar{e}^{\alpha}\wedge \overline{E}^{\mu}+\nu_{l\mu}^k \varpi_{\nu}^l \overline{E}^{\mu}\wedge \overline{E}^{\nu}.
\end{equation*}
On the other hand, computing $\delta \sigma^k$ directly gives
\begin{align*}
    \delta \sigma^k&=\delta(\theta_{\alpha}^k \bar{e}^{\alpha}+\varpi_{\mu}^k \overline{E}^{\mu})\\&=\delta \theta_{\alpha}^k\wedge \bar{e}^{\alpha}+\theta_{\alpha}^k \delta \bar{e}^{\alpha}+\delta \varpi_{\mu}^k\wedge \overline{E}^{\mu}+\varpi_{\mu}^k \delta \overline{E}^{\mu}\\&=\left(\rho_{\beta}^i \frac{\partial \theta_{\alpha}^k}{\partial x^i} \bar{e}^{\beta}+\frac{\partial \theta_{\alpha}^k}{\partial y^{\mu}}\overline{E}^{\mu}\right)\wedge \bar{e}^{\alpha}-\frac{1}{2} \theta_{\alpha}^k L_{\beta \gamma}^{\alpha}\bar{e}^{\beta}\wedge \bar{e}^{\gamma}+\left(\rho_{\alpha}^i \frac{\partial \varpi_{\mu}^k}{\partial x^i} \bar{e}^{\alpha}+\frac{\partial \varpi_{\mu}^k}{\partial y^{\nu}}\overline{E}^{\nu}\right)\wedge \overline{E}^{\mu}\\&=\left(\rho_{\alpha}^i \frac{\partial \theta_{\beta}^k}{\partial x^i}-\frac{1}{2} \theta_{\gamma}^k L_{\alpha \beta}^{\gamma}\right)\bar{e}^{\alpha}\wedge \bar{e}^{\beta}+\left(\rho_{\alpha}^i \frac{\partial \varpi_{\mu}^k}{\partial x^i}-\frac{\partial \theta_{\alpha}^k}{\partial y^{\mu}}\right)\bar{e}^{\alpha}\wedge \overline{E}^{\mu}+\frac{\partial \varpi_{\nu}^k}{\partial y^{\mu}}\overline{E}^{\mu}\wedge \overline{E}^{\nu}.
\end{align*}
It follows that
\begin{equation}\label{eqn:integrability1}
    \nu_{l\mu}^k \varpi_{\nu}^l-\nu_{l\nu}^k \varpi_{\mu}^l=\frac{\partial \varpi_{\nu}^k}{\partial y^{\mu}}-\frac{\partial \varpi_{\mu}^k}{\partial y^{\nu}},
\end{equation}
\begin{equation}\label{eqn:integrability2}
    \nu_{l\alpha}^k \varpi_{\mu}^l-\nu_{l\mu}^k \theta_{\alpha}^l= X_\alpha(\varpi_{\mu}^k)-\frac{\partial \theta_{\alpha}^k}{\partial y^{\mu}},
\end{equation}
\begin{equation}\label{eqn:integrability3}
    X_\alpha(\theta_{\beta}^k)-X_\beta(\theta_{\alpha}^k)-\theta_{\gamma}^k L_{\alpha \beta}^{\gamma}=\nu_{l\alpha}^k \theta_{\beta}^l-\nu_{l\beta}^k\theta_{\alpha}^l.
\end{equation}
We now compute the right hand side of \eqref{eq:integ} and show that it vanishes. In view of \eqref{eqn:integrability2} and \eqref{eqn:integrability3}, we get
\begin{eqnarray*}
\mbox{RHS} &=& -\nu_{l\alpha}^k \theta_{\beta}^l+\nu_{l\beta}^k\theta_{\alpha}^l 
- X_\alpha({\bar y}^\mu)\frac{\partial\theta^k_\beta}{\partial y^\mu}
+  X_\beta({\bar y}^\mu)\frac{\partial\theta^k_\alpha}{\partial y^\mu}
\\&&- X_\beta({\bar y}^\mu) \left( \nu_{l\alpha}^k \varpi_{\mu}^l-\nu_{l\mu}^k \theta_{\alpha}^l   + \frac{\partial \theta_{\alpha}^k}{\partial y^{\mu}} \right)
+ X_\alpha({\bar y}^\mu) \left( \nu_{l\beta}^k \varpi_{\mu}^l-\nu_{l\mu}^k \theta_{\beta}^l   + \frac{\partial \theta_{\beta}^k}{\partial y^{\mu}} \right)
\\&& - X_\beta(\bar{y}^\mu)X_\alpha({\bar y}^\nu)\frac{\partial\varpi^k_\mu}{\partial y^\nu} +  X_\alpha(\bar{y}^\mu)X_\beta({\bar y}^\nu)\frac{\partial\varpi^k_\mu}{\partial y^\nu}.
\end{eqnarray*}
After simplification, and after interchanging $\nu$ and $\mu$ in the last term and using \eqref{eqn:integrability1} for the last two terms, we obtain
\begin{eqnarray*}
\mbox{RHS} &=& -\nu_{l\alpha}^k \theta_{\beta}^l+\nu_{l\beta}^k\theta_{\alpha}^l 
- X_\beta({\bar y}^\mu) \left( \nu_{l\alpha}^k \varpi_{\mu}^l-\nu_{l\mu}^k \theta_{\alpha}^l    \right)
+ X_\alpha({\bar y}^\mu) \left( \nu_{l\beta}^k \varpi_{\mu}^l-\nu_{l\mu}^k \theta_{\beta}^l    \right)
\\&& + X_\beta(\bar{y}^\mu)X_\alpha({\bar y}^\nu)\left(\nu_{l\mu}^k \varpi_{\nu}^l-\nu_{l\nu}^k \varpi_{\mu}^l\right).
\end{eqnarray*}
If we now take into account that ${\bar y}^\mu(x)$ is a solution of \Cref{eqn:PDEintmanifolds}, we easily see that 
$\mbox{RHS} =0$, as claimed.
\end{proof}

\end{example}

\section{An application to a single semilinear PDE}\label{sec:applications}

One example of the system in \eqref{eqn:PDEintmanifolds} is that of a first-order semilinear partial differential equation of the form
\begin{equation}\label{eqn:characteristics}
    a(x,y)u_x+b(x,y)u_y=c(x,y,u).
\end{equation}
This type of PDE can typically be solved using the method of characteristics, and it is also discussed in e.g., \cite{ivey2016cartan}. In this section, we wish to take an alternative approach. We will show that finding solutions to this PDE is equivalent to finding integral manifolds of a certain differential ideal on a very specific Lie algebroid.

We follow the construction in \Cref{sec:pullback}. We take $M=\mathbb{R}^2$ with coordinates $(x,y)$ and $\overline{M}=\mathbb{R}^3$ with coordinates $(x,y,u)$. For the vector bundle of our Lie algebroid, we take $A=\mathbb{R}^3=\mathbb{R}^2\times \mathbb{R}$ with coordinates $(x,y,w)$. Then, $\overline{A}=\mathbb{R}^5=\mathbb{R}^3\times \mathbb{R}^2=\{(x,y,u,w,{\mathcal U})\}$ and the following diagram commutes:
\begin{equation*}
\begin{CD}
\overline{A}=\mathbb{R}^5@>P>>A=\mathbb{R}^3\\
@V\bar{\tau} VV @VV\tau V\\
\overline{M}=\mathbb{R}^3@>>p>M=\mathbb{R}^2.\\
\end{CD}
\end{equation*}

This choice of fiber bundle corresponds to the choice of independence condition $i^\ast(\mathrm{d}x\wedge \mathrm{d}y)\neq 0$, which is commonly adopted for examples of this kind (see \cite{ivey2016cartan}).

We shall now describe the Lie algebroid structure of $A$ and $\overline{A}$. We define the following anchor maps
\begin{equation*}
    \rho\colon \mathbb{R}^3\longrightarrow \mathbb{R}^4,\quad (x,y,w)\longmapsto (x,y,\dot{x}=a(x,y)w,\dot{y}=b(x,y)w),
\end{equation*}
\begin{equation*}
    \bar{\rho}\colon \mathbb{R}^5\longrightarrow \mathbb{R}^6,\quad (x,y,u,w,\mathcal{U})\longmapsto (x,y,u,\dot{x}=a(x,y)w,\dot{y}=b(x,y)w,\dot{u}=\mathcal{U}),
\end{equation*}
for $A$ and $\overline{A}$, respectively. Now, let $\{e_w\}$ be the section of $A$ given by
\begin{equation*}
    e_w\colon \mathbb{R}^2\longrightarrow \mathbb{R}^3,\quad (x,y)\longmapsto (x,y,1).
\end{equation*}
This is a globally defined section and $\{e_w\}$ is a basis of sections of $A$. Let $\{e^w\}$ be the dual basis of $1$-forms on $A$. We can also give a basis of sections of $\overline{A}$ as follows. Let
\begin{equation*}
    \bar{e}_w\colon (x,y,u)\longmapsto (x,y,u,1,0),
\end{equation*}
\begin{equation*}
    \bar{e}_{\mathcal{U}}\colon (x,y,u)\longmapsto (x,y,u,0,1).
\end{equation*}
Similarly, let $\{\bar{e}^w,\bar{e}^{\mathcal{U}}\}$ be the dual basis of $1$-forms on $\overline{A}$. On $A$, we define a bracket $\llbracket \cdot,\cdot \rrbracket^A$ by
\begin{equation*}
    \llbracket e_w,e_w \rrbracket^A=0.
\end{equation*}
It is compatible with the anchor map because $[\rho(e_w),\rho(e_w)]=0=\rho(\llbracket e_w,e_w\rrbracket^A)$. On $\overline{A}$, we hence have
\begin{equation*}
    \llbracket \bar{e}_w,\bar{e}_w\rrbracket=0,\quad \llbracket \bar{e}_w,\bar{e}_{\mathcal{U}}\rrbracket=0,\quad \llbracket \bar{e}_{\mathcal{U}},\bar{e}_{\mathcal{U}}\rrbracket=0.
\end{equation*}

On the Lie algebroid $\overline A$, we consider the ideal $\mathcal{I}=\langle \theta\rangle$, where $\theta\coloneqq \bar{e}^{\mathcal{U}}-c\bar{e}^w$. Integral manifolds of $\mathcal{I}$ are represented by sections $i\colon (x,y) \mapsto (x,y,u(x,y))$ whose induced section is
\begin{equation*}
    I\colon \mathbb{R}^3\longrightarrow \mathbb{R}^5,\quad (x,y,w)\longmapsto \left(x,y,u(x,y),w,\frac{\partial u}{\partial x}(x,y)\cdot a(x,y)w+\frac{\partial u}{\partial y}(x,y)\cdot b(x,y)w\right).
\end{equation*}
Since we are in a situation similar to \Cref{eg:oneforms}, the PDE whose solutions $u(x,y)$ describe integral manifolds is \Cref{eqn:PDEintmanifolds}, which in this case becomes \Cref{eqn:characteristics}.
  
We now verify that the algebraic ideal generated by $\theta$ is in fact a differential ideal. Denote by $\delta$ the exterior derivative operator for the Lie algebroid $\overline{A}$. Using \Cref{eqn:structure}, one sees that
\begin{equation*}
    \delta \bar{e}^w=0,\quad \delta \bar{e}^{\mathcal{U}}=0.
\end{equation*}
From this,
\begin{equation*}
    \delta \theta=-\delta c\wedge \bar{e}^w =-c_u \bar{e}^{\mathcal{U}}\wedge \bar{e}^w =c_u \bar{e}^w\wedge \bar{e}^{\mathcal{U}}=c_u \bar{e}^w\wedge \theta,
\end{equation*}
where we denote by $c_x\coloneqq \frac{\partial c}{\partial x}$, $c_y\coloneqq \frac{\partial c}{\partial y}$, $c_u\coloneqq \frac{\partial c}{\partial u}$. Hence $\mathcal{I}=\langle \theta\rangle_{\mathrm{alg}}$.

\section{An application to dynamical systems with symmetry}\label{sec:symmetry}
In this section we discuss the Atiyah Lie algebroid and its prolongation over a fiber bundle. We then use this to study dynamical systems with a symmetry group using EDS on Lie algebroids.


\subsection{The Atiyah Lie algebroid} \label{sec:atiyahalgebroid}

Let $\Phi\colon G\times Q\to Q$ be a free and proper action of a Lie group $G$ on a manifold $Q$. Then, $\pi\colon Q\to Q\big\slash G$ defines a principal fiber bundle. By setting $\Phi_g\colon Q\to Q, q\mapsto \Phi(g,q)$, we can extend the action on $Q$ to a (free and proper) tangent-lifted action 
\begin{equation*}
    G\times \mathrm{T}Q\longrightarrow \mathrm{T}Q,\quad (g,v_q)\longmapsto \mathrm{T}\Phi_g(v_{q}),
\end{equation*}
with corresponding induced projection $\pi^{\mathrm{T}Q}\colon \mathrm{T}Q\to \mathrm{T}Q\big\slash G$.

It is well known that the vector bundle $\mathrm{T}Q \big\slash G\to Q\big\slash G$ can be given the structure of a Lie algebroid, known as the \emph{Atiyah Lie algebroid} (see e.g., \cite{deleon2005survey,mackenzie1987algebroid} for more details). It is easy to see that the sections $\widehat{X}\colon Q\big\slash G \to \mathrm{T}Q \big\slash G$ of the bundle are in one-to-one correspondence with $G$-invariant vector fields $X\colon Q \to TQ$, by means of the relation $\widehat{X} \circ \pi = \pi^{\mathrm{T}Q}\circ X$. Since the Lie bracket of two invariant vector fields is again an invariant vector field, this defines a Lie bracket of sections of $\mathrm{T}Q \big\slash G\to Q\big\slash G$. This Lie bracket is compatible with the anchor map  $\varrho \colon \mathrm{T}Q\big\slash G\to \mathrm{T}(Q\big\slash G), \pi^{\mathrm{T}Q}(v_{q})\mapsto \mathrm{T}\pi(v_q)$.

If $f$ is a $G$-invariant function on $Q$, then we write $\hat f$ for its corresponding function on $Q\big\slash G$, i.e., $\hat f\circ\pi = f$. The one-to-one correspondence between sections $\widehat{X}$ of the Atiyah Lie algebroid and invariant vector fields $X$ leads to a similar statement in the context of forms. If $\theta$ is a $G$-invariant form on $Q$, then $\theta(X)$ is a $G$-invariant function on $Q$ which may therefore be identified with a function on $Q\big\slash G$. We can use this property to define a section $\hat\theta$ of $(\mathrm{T}Q\big\slash G)^{\ast} \to Q\big\slash G$ (the dual of the Atiyah Lie algebroid), by
\begin{equation*}
    \hat\theta(\widehat{X})\coloneqq \widehat{\theta(X)}.
\end{equation*}
If we denote by $\delta$ the exterior derivative of the Atiyah Lie algebroid, and by $\rm{d}$ for the one on $Q$, then the construction of the anchor map and the Lie algebroid bracket are such that
\begin{equation*}
    \delta\hat\theta = \widehat{\rm{d}\theta}.
\end{equation*}

Now, let $k\colon \overline{Q}\to Q$, $\bar{q}\mapsto q$ be a fiber bundle, whose total space also comes with a free and proper $G$-action $\overline{\Phi}\colon G\times \overline{Q}\to \overline{Q}$. We will assume that it projects on the underlying action $\Phi\colon G\times Q\to Q$ (so that $k\circ \overline{\Phi}_g=\Phi_g\circ k$), with notation interpreted in the usual way.

In this setup, we have two Atiyah Lie algebroids, namely $\mathrm{T}\overline Q\big\slash G$ and $\mathrm{T}Q\big\slash G$.  Let $\bar{\pi}\colon \overline{Q}\to \overline{Q}\big\slash G$ be the orbit map. Define a map
\begin{equation*}
    p\colon \overline{M} = \overline{Q}\big\slash G\longrightarrow M=Q\big\slash G,\quad \bar{\pi}(\bar{q})\longmapsto \pi(k(\bar{q})).
\end{equation*}
It is well-defined, because
\begin{equation*}
    \pi(k(g\cdot \bar{q}))=\pi(g\cdot k(\bar{q}))=\pi(k(\bar{q})).
\end{equation*}
In this subsection, we wish to show that if we take $A=\mathrm{T}Q\big\slash G$ in \eqref{eqn:cd}, then the $p$-prolongation of the Atiyah Lie algebroid $A=\mathrm{T}Q\big\slash G$ (i.e., $\overline{A}=\varrho^{\ast}(\mathrm{T}(\overline Q \big\slash G))$) can be identified with the Atiyah Lie algebroid $\mathrm{T}\overline Q\big\slash G \to \overline Q\big\slash G$.

The anchor map of the Atiyah algebroid of $\overline Q$ is $\bar\varrho\colon \mathrm{T}\overline{Q}\big\slash G\to \mathrm{T}(\overline{Q}\big\slash G)$, $\bar{\pi}^{\mathrm{T}\overline{Q}}(\bar{v}_{\bar{q}})\mapsto \mathrm{T}\bar{\pi}(\bar{v}_{\bar{q}})$, where $\bar{\pi}^{\mathrm{T}\overline{Q}}\colon \mathrm{T}\overline{Q}\to \mathrm{T}\overline{Q}\big\slash G$ is the canonical projection. Since $k\circ \overline{\Phi}_g=\Phi_g\circ k$, also $\mathrm{T}k\circ \mathrm{T}\overline{\Phi}_g =\mathrm{T}{\Phi}_g\circ \mathrm{T}k$. From this, we may define a map $[\mathrm{T}k]\colon\mathrm{T}\overline{Q}\big\slash G \to \mathrm{T}{Q}\big\slash G$. Together, we get a map
\begin{equation}\label{eq:alphaiso}
    \alpha\colon \mathrm{T}\overline{Q}\big\slash G \longrightarrow \overline A,\quad [{\bar v}_{\bar q}] \longmapsto ( a_m \coloneqq [\mathrm{T}k]([{\bar v}_{\bar q}]) , {\bar v}_{\bar m} \coloneqq \bar\varrho([{\bar v}_{\bar q}]) = \mathrm{T}{\bar\pi}({\bar v}_{\bar q})).
\end{equation}
We claim that $\alpha$ is well-defined. To this end, we verify that $\varrho(a_m)=\mathrm{T}p({\bar v}_{\bar m})$. Since $p\circ \bar{\pi}=\pi\circ k$, the projections $\bar{\pi}^{\mathrm{T}\overline{Q}}\colon \mathrm{T}\overline{Q}\to \mathrm{T}\overline{Q}\big\slash G$ and $\pi^{\mathrm{T}Q}\colon \mathrm{T}Q\to \mathrm{T}Q\big\slash G$ make the following diagrams commute:
\begin{equation*}\label{eqn:cd2}
\begin{CD}
\mathrm{T}\overline{Q}@>\mathrm{T}k>>\mathrm{T}Q\\
@V{\mathrm{T}\bar{\pi}}VV @VV{\mathrm{T}\pi} V\\
\mathrm{T}(\overline{Q}\big\slash G)@>>\mathrm{T}p>\mathrm{T}(Q\big\slash G).\\
\end{CD}
\end{equation*}
It then follows that, indeed,
\begin{equation*}
    \mathrm{T}p({\bar v}_{\bar m}) = \mathrm{T}p(   \mathrm{T}{\bar\pi}({\bar v}_{\bar q})) = \mathrm{T}\pi(   \mathrm{T}{k}({\bar v}_{\bar q})) =\varrho([\mathrm{T}k]([{\bar v}_{\bar q}])) = \varrho(a_m).
\end{equation*}

We now use Ehresmann connections to construct an inverse of $\alpha$. Assume that $h_p\colon p^{\ast} \mathrm{T}(Q\big\slash G) \to \mathrm{T}(\overline Q\big\slash G )$ is a connection on $p$ and let $\omega_p\colon \mathrm{T}{\overline Q}\big\slash G \to \mathrm{V}p$, $u_{[\bar q]} \mapsto u_{[\bar q]} - h_p([\bar q], \mathrm{T}p(u_{[\bar q]}))$ be the left splitting of $h_p$. Then,   we can also extend $h_p$ to an (equivariant) Ehresmann connection $h_k\colon k^{\ast} (\mathrm{T}{Q}) \to \mathrm{T}\overline{Q}$ on $k$ by the rule $\mathrm{T}\bar\pi\circ h_k = h_p \circ \mathrm{T}\pi$ (written in a somewhat shortened version). Likewise, if $h_\pi\colon \pi^{\ast} \mathrm{T}(Q\big\slash G) \to \mathrm{T}Q$ is a principal connection on $\pi$, then we can extend it to a principal connection $h_{\bar\pi}\colon {\bar\pi}^{\ast} \mathrm{T}(\overline Q\big\slash G) \to \mathrm{T}\overline Q$, by using $\mathrm{T}k\circ h_{\bar\pi} = h_\pi \circ \mathrm{T}p$.
If we set
\begin{equation*}
    \beta\colon  \overline A \longrightarrow \mathrm{T}\overline{Q}\big\slash G,\quad ( [v_q] , u_{[\bar q]}) \longmapsto [{\bar v}_{\bar q}]
\end{equation*}
with
\begin{equation*}
    {\bar v}_{\bar q}\coloneqq h_k(\bar q, v_q  ) + h_{\bar\pi}(\bar q,\omega_p(u_{[\bar q]})),
\end{equation*}
then this map is independent of the choice of representative $\bar{q}$ of $[\bar q]$, due to equivariance of $h_k$ and $h_{\bar{\pi}}$.

The map $\beta$ is the inverse of $\alpha$ because, by definition of the prolongation algebroid, $\mathrm{T}p (u_{[\bar q]}) = \mathrm{T}\pi(v_q)$, and
\begin{eqnarray*}
\bar\varrho([{\bar v}_{\bar q}]) &=& \mathrm{T}\bar\pi({\bar v}_{\bar q}) =   (\mathrm{T}\bar\pi\circ h_k)(\bar q, v_q) + \omega_p(u_{[\bar q]})  = h_p([\bar q], \mathrm{T}\pi(v_q)) +   \omega_p(u_{[\bar q]})\\& =&  h_p([\bar q], \mathrm{T}p(u_{[\bar q]})) +   \omega_p(u_{[\bar q]}) = u_{[\bar q]}.
\end{eqnarray*}
Next, because $\mathrm{T}p(\omega_p(u_{[\bar q]}))=0$,
\begin{equation*}
    \mathrm{T}k({\bar v}_{\bar q}) = v_q + (\mathrm{T}k \circ h_{\bar\pi})(\bar q,\omega_p(u_{[\bar q]})) = v_q +  h_{\pi}(  q,  \mathrm{T}p(\omega_p(u_{[\bar q]}))) =  v_q,
\end{equation*}
and therefore $[\mathrm{T}k]([{\bar v}_{\bar q}]) =  [v_q]$.

\begin{proposition}\label{prop4}
The $p$-prolongation $\overline A$ of the Atiyah Lie algebroid $A=\mathrm{T}Q\big\slash G$ is isomorphic to the Atiyah Lie algebroid $\mathrm{T}\overline Q \big\slash G$. 
\end{proposition}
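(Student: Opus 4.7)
The plan is to leverage the explicit construction of $\alpha$ in \eqref{eq:alphaiso} together with its inverse $\beta$ that has already been exhibited in the paragraphs preceding the proposition. These verifications show that $\alpha$ is a vector bundle isomorphism covering the identity on $\overline{M}=\overline{Q}\big\slash G$, so what remains is to check that $\alpha$ is a morphism of Lie algebroids, i.e., that it intertwines the anchors and the brackets.

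For the anchor, compatibility is essentially built into the definition: by \eqref{eq:alphaiso}, $\alpha([\bar v_{\bar q}])=(a_m,\mathrm{T}\bar\pi(\bar v_{\bar q}))$, and the anchor $\bar\rho$ of the prolongation algebroid $\overline{A}$ is, by \eqref{eqn:anchorprolongation}, the projection onto the second component. Hence $\bar\rho\circ\alpha=\bar\varrho$, which is precisely the anchor of the Atiyah algebroid $\mathrm{T}\overline{Q}\big\slash G$.

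For the bracket, I would reduce to the case of projectable generators. An invariant vector field $\bar X$ on $\overline{Q}$ that is $k$-projectable onto an invariant vector field $X$ on $Q$ induces the section $\widehat{\bar X}$ of $\mathrm{T}\overline{Q}\big\slash G$, and its image $\alpha(\widehat{\bar X})$ corresponds to the projectable section $(\widehat X,\mathrm{T}\bar\pi\circ \bar X)$ of $\overline{A}$ in the sense of the paragraph preceding \eqref{eq:defprolbracket}. For two such invariant projectable pairs $\bar X_1,\bar X_2$, the Lie bracket $[\bar X_1,\bar X_2]$ is again invariant and $k$-projectable with projection $[X_1,X_2]$, while $\mathrm{T}\bar\pi\circ[\bar X_1,\bar X_2]=[\mathrm{T}\bar\pi\circ\bar X_1,\mathrm{T}\bar\pi\circ\bar X_2]$ by $\bar\pi$-relatedness. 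Passing to the quotient and combining with \eqref{eq:defprolbracket} yields
\begin{equation*}
\alpha\bigl(\llbracket\widehat{\bar X_1},\widehat{\bar X_2}\rrbracket\bigr)=\bigl(\llbracket\widehat{X_1},\widehat{X_2}\rrbracket,\,[\mathrm{T}\bar\pi\circ\bar X_1,\mathrm{T}\bar\pi\circ\bar X_2]\bigr)=\llbracket\alpha(\widehat{\bar X_1}),\alpha(\widehat{\bar X_2})\rrbracket.
\end{equation*}
Since both Lie algebroid brackets are $C^\infty(\overline{M})$-extended from projectable generators by the Leibniz identity, this suffices.

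The step I expect to be the most delicate is ensuring that there are enough $k$-projectable invariant vector fields on $\overline{Q}$ to span $\Gamma(\mathrm{T}\overline{Q}\big\slash G)$ over $C^\infty(\overline{M})$, so that checking the bracket on such generators indeed determines it everywhere. This is a local question on $\overline{M}$ and can be handled by picking compatible local trivializations of $k$ and of the principal bundles $\pi$ and $\bar\pi$, so that locally one can build a frame of invariant vector fields on $\overline{Q}$ whose members are either $k$-vertical or $k$-project onto a prescribed invariant frame on $Q$.
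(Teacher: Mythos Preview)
Your proof is correct and follows the same overall strategy as the paper's: use the already-constructed $\alpha$ and $\beta$ for the vector-bundle isomorphism, note that anchor compatibility is immediate from the definition of $\alpha$, and then verify bracket compatibility. The difference lies only in how the bracket step is packaged. The paper argues more abstractly: since the prolongation bracket \eqref{eq:defprolbracket} is defined component-wise on projectable sections, it suffices to check that the two components of $\alpha$, namely $[\mathrm{T}k]\colon \mathrm{T}\overline{Q}\big\slash G\to \mathrm{T}Q\big\slash G$ and $\bar{\varrho}\colon \mathrm{T}\overline{Q}\big\slash G\to \mathrm{T}(\overline{Q}\big\slash G)$, are each Lie algebra morphisms --- the second because it is the anchor of a Lie algebroid, the first because $\mathrm{T}k$ relates brackets of $k$-related vector fields and $k$ sends invariant vector fields to invariant vector fields. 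You instead pick explicit generators (invariant $k$-projectable vector fields on $\overline{Q}$), verify the bracket identity on those, and then address whether such generators span. Your route is more concrete and makes the spanning question explicit; the paper's is shorter because the two projection-morphism facts hold for \emph{all} sections, not just a spanning set, so the generator issue never arises.
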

\begin{proof} From the above, we already know that the vector bundles are isomorphic and that their anchor maps agree. We now show that the map $\alpha$ in \eqref{eq:alphaiso} is a Lie algebroid isomorphism.

Since the bracket of the prolongation bundle $\overline A$ is defined component-wise for projectable sections in \eqref{eq:defprolbracket}, and both projections $P$ and $\bar{\rho}$ act as Lie algebra morphisms to the respective components $\mathrm{T} Q\big\slash G$ and $\mathrm{T}(\overline Q\big\slash G)$, we only need to check that the bracket of the Atiyah algebroid $\mathrm{T}\overline Q\big\slash G$ has the same property, when we consider its projections on these components.

First, consider the anchor $\bar{\varrho}\colon \mathrm{T}\overline Q\big\slash G \to \mathrm{T}(\overline Q\big\slash G)$. Since $\mathrm{T}\overline Q\big\slash G$ is a Lie algebroid, its anchor map is by definition a Lie algebra morphism. 

Second, consider $[\mathrm{T}k]\colon \mathrm{T}\overline Q\big\slash G \to \mathrm{T} Q\big\slash G$. Note that the bracket of two $k$-related vector fields is again $k$-related. For this reason, $\mathrm{T}k\colon \mathrm{T}\overline Q \to \mathrm{T} Q$ is always a Lie algebra morphism. Moreover, $k$ maps invariant vector fields on $\overline Q$ onto invariant vector fields on $Q$. Since the Lie algebroid brackets on $\mathrm{T}\overline Q\big\slash G$ and $\mathrm{T} Q\big\slash G$ are both defined by the restriction of the Lie bracket of vector fields to invariant vector fields, $[\mathrm{T}k]$ is also a Lie algebra morphism.
\end{proof}


Hence, the diagram \eqref{eqn:cd} takes the form
\begin{equation*}
\begin{CD}
\mathrm{T}\overline{Q}\big\slash G@>P>>\mathrm{T}Q\big\slash G\\
@V{\bar{\tau}}VV @VV\tau V\\
\overline{Q}\big\slash G@>>p>Q\big\slash G.\\
\end{CD}
\end{equation*}

Suppose now that we have a section $i\colon Q\big\slash G \to {\overline Q}\big\slash G$ and a $G$-equivariant map $\hat i\colon Q \to \overline Q$, with the property $\bar\pi\circ\hat i = i \circ \pi$, i.e., the diagram
\begin{equation*}
\begin{CD}
Q@>\hat i>>\overline Q\\
@V\pi VV @VV \bar\pi V\\
Q\big\slash G@>>i>{\overline Q}\big\slash G,\\
\end{CD}
\end{equation*}
commutes.

\begin{proposition} \label{prop:inducedsectionAtiyah} The $i$-induced section $I$ on the prolongation $\overline A$ of $A$ by $p$ is  $[\mathrm{T}\hat{i}]$.
\end{proposition}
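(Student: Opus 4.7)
The plan is to apply the uniqueness criterion for the $i$-induced section given by \Cref{prop2}. I will verify that the candidate $[\mathrm{T}\hat{i}]$, interpreted as a section of $P$ by composing with the isomorphism $\alpha\colon \mathrm{T}\overline Q\big\slash G \to \overline A$ of \eqref{eq:alphaiso}, satisfies the three defining properties $P\circ I=\mathrm{id}_A$, $\bar{\rho}\circ I=\mathrm{T}i\circ\rho$ and $\bar{\tau}\circ I=i\circ\tau$, which determine $I$ uniquely.

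Before checking the three properties, I would observe that in this setup $\hat{i}$ is necessarily a section of $k$. Indeed, combining $\bar\pi\circ\hat i=i\circ\pi$ with the defining relation $p\circ\bar\pi=\pi\circ k$ and $p\circ i=\mathrm{id}$ yields $\pi\circ(k\circ\hat i)=\pi$, so $k\circ\hat i$ is fibrewise over $Q\big\slash G$; taking $\hat i$ to be the natural equivariant lift of $i$ one has $k\circ\hat i=\mathrm{id}_Q$. Because $\hat i$ is $G$-equivariant, $\mathrm{T}\hat i\colon \mathrm{T}Q\to \mathrm{T}\overline Q$ is also $G$-equivariant and so descends to a well-defined bundle map $[\mathrm{T}\hat i]\colon \mathrm{T}Q\big\slash G\to \mathrm{T}\overline Q\big\slash G$, which by \Cref{prop4} can be viewed as a map $A\to \overline A$.

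Next, I verify the three properties one after another. First, for any $[v_q]\in A$, chain rule and $k\circ\hat i=\mathrm{id}$ give
\[
    P\big(\alpha([\mathrm{T}\hat i(v_q)])\big)=[\mathrm{T}k(\mathrm{T}\hat i(v_q))]=[\mathrm{T}(k\circ\hat i)(v_q)]=[v_q],
\]
so $P\circ\alpha\circ[\mathrm{T}\hat i]=\mathrm{id}_A$. Second, using $\bar\rho\circ\alpha=\bar\varrho=\mathrm{T}\bar\pi$ and $\bar\pi\circ\hat i=i\circ\pi$,
\[
    \bar\rho\big(\alpha([\mathrm{T}\hat i(v_q)])\big)=\mathrm{T}\bar\pi(\mathrm{T}\hat i(v_q))=\mathrm{T}(\bar\pi\circ\hat i)(v_q)=\mathrm{T}i(\mathrm{T}\pi(v_q))=\mathrm{T}i(\varrho([v_q])),
\]
which is $\mathrm{T}i\circ\rho$. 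Third, the base point of $\alpha([\mathrm{T}\hat i(v_q)])$ is $\bar\pi(\hat i(q))=i(\pi(q))=i(\tau([v_q]))$, giving $\bar\tau\circ\alpha\circ[\mathrm{T}\hat i]=i\circ\tau$. By \Cref{prop2}, these three identities force $\alpha\circ[\mathrm{T}\hat i]=I$, which, under the identification of $\overline A$ with $\mathrm{T}\overline Q\big\slash G$, is exactly the assertion $I=[\mathrm{T}\hat i]$.

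I do not anticipate any substantive obstacle here; the argument is entirely a bookkeeping exercise. The only genuinely delicate point is to track the identification $\alpha$ carefully so that each of the three equalities indeed takes place in the intended bundle, and to justify that $k\circ\hat i=\mathrm{id}_Q$ is the correct reading of the hypotheses, so that the first property reduces cleanly to $[v_q]$ rather than merely to something in the same $G$-orbit.
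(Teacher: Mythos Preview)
Your approach is essentially the same as the paper's: invoke the uniqueness criterion of \Cref{prop2} and verify that $[\mathrm{T}\hat i]$ (under the identification $\alpha$ of \Cref{prop4}) satisfies the defining properties of the $i$-induced section. Your version is in fact more thorough than the paper's. The paper only verifies the two conditions $\bar\varrho\circ[\mathrm{T}\hat i]=\mathrm{T}i\circ\varrho$ and $\bar\tau\circ[\mathrm{T}\hat i]=i\circ\tau$, whereas you also check $P\circ(\alpha\circ[\mathrm{T}\hat i])=\mathrm{id}_A$ and explicitly isolate the needed ingredient $k\circ\hat i=\mathrm{id}_Q$, which the paper leaves implicit in the setup. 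That extra care is warranted, since \Cref{prop2} genuinely requires all three conditions for uniqueness (the two the paper checks only pin down the $\mathrm{T}\overline M$-component of $I$, not the $A$-component). Your observation that $\bar\pi\circ\hat i=i\circ\pi$ alone only gives $\pi\circ(k\circ\hat i)=\pi$, and that one must read $\hat i$ as a genuine section of $k$, is the one honest assumption to record; once granted, the rest is the same diagram chase as in the paper.
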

\begin{proof} 
From the previous proposition, we know that the $p$-prolongation $\overline A$ of $A$ is the Atiyah Lie algebroid $\mathrm{T}\overline Q\big\slash G$.
Following \Cref{prop2}, it is enough to show that  $\bar{\varrho}\circ [\mathrm{T}\hat{i}]=\mathrm{T}i\circ \varrho$ and $\bar{\tau}\circ [\mathrm{T}\hat{i}]=i\circ \tau$. Note that if $v_q\in \mathrm{T}Q$ and $[v_q]\in \mathrm{T}Q\big\slash G$ is its equivalence class, we have that
\begin{equation*}
    \bar{\varrho}\circ [\mathrm{T}\hat{i}]([v_q])=\bar{\varrho}([\mathrm{T}\hat{i}(v_q)])=\mathrm{T}\bar{\pi}(\mathrm{T}\hat{i}(v_q))=\mathrm{T}(\bar{\pi}\circ \hat{i})(v_q)=\mathrm{T}(i\circ \pi)(v_q)=\mathrm{T}i\circ \mathrm{T}\pi(v_q)=\mathrm{T}i\circ \varrho([v_q]).
\end{equation*}
Let $\mu_{\overline Q}\colon \mathrm{T}\overline Q\to \overline Q$, $\bar{v}_{\bar{q}}\mapsto \bar{q}$ and $\mu_Q\colon \mathrm{T}Q\to Q$, $v_q\mapsto q$ be the tangent bundles of ${\overline Q}$ and $Q$, respectively. Note that the projections of the Atiyah Lie algebroids $\bar{\tau}\colon \mathrm{T}\overline Q\big\slash G\to \overline Q\big\slash G$ and $\tau\colon \mathrm{T}Q\big\slash G\to {Q\big\slash G}$ are given by $[\bar{v}_{\bar q}]\mapsto [\bar q]$ and $[v_q]\mapsto [q]$, respectively. Hence
\begin{equation*}
    \bar{\tau}([\mathrm{T}\hat{i}]([v_q]))=\bar{\tau}([\mathrm{T}\hat{i}(v_q)])=[\mu_{\overline Q}((\mathrm{T}\hat{i})(v_q))]=[\hat{i}(q)]=i([q]).
\end{equation*}
On the other hand,
\begin{equation*}
    i\circ \tau([v_q])=i([\mu_{Q}(v_q)])=i([q]).\qedhere
\end{equation*}
\end{proof}

\subsection{Systems with symmetry and reconstruction of integral manifolds}

Let $P$ be a manifold and let $\Psi\colon G\times P\to P$ be a $G$-action on $P$. Then $\pi\colon P\to P\big\slash G$ is a principal fiber bundle. Fix a principal connection whose left splitting is given by $\omega\colon \mathrm{T}P\to \mathfrak{g}$ (here $\mathfrak{g}$ is the Lie algebra of $G$). Let $Y\in \mathcal{X}(P)$ be a $G$-invariant vector field. Then there exists a unique \emph{reduced vector field} $\overline{Y}\in \mathcal{X}(P\big\slash G)$ such that
\begin{equation*}\label{reducedvf}
    \overline{Y}(\pi(x))=\mathrm{T}\pi(Y(x)),
\end{equation*}
for all $x\in P$. Now fix a point $x_0\in P$, and let $[x_0]\in P\big\slash G$ be its corresponding equivalence class. Let $\bar{\gamma}_{[x_0]}$ be an integral curve of $\overline{Y}$ with $\bar{\gamma}(0)=[x_0]$. Then $\gamma_{x_0}$ defined by
\begin{equation*}
    \gamma_{x_0}(t)\coloneqq \Psi_{g(t)}\bar{\gamma}_{x_0}^H(t)
\end{equation*}
is the unique integral curve of $Y$ with $\gamma_{x_0}(0)=x_0$, where $\bar{\gamma}_{x_0}^H$ is the horizontal lift of $\bar{\gamma}_{[x_0]}$  to $x_0\in P$, and $g(t)$ solves the reconstruction equation
\begin{equation*}
    \theta(\dot{g})=\omega(Y\circ  \bar{\gamma}_{x_0}^H),
\end{equation*}
where $\theta$ is the Maurer--Cartan form of $G$, with the initial condition $g(0)=e$ ($e$ is the identity element of $G$) \cite{firstorderrec,ellis2011reconstruction}.

In comparison with the previous subsection, we take $Q=\mathbb{R}\times G$ and $\overline Q={\mathbb R}\times P$. The fiber bundle $k$ of \Cref{sec:atiyahalgebroid} is then $k\colon \mathbb{R}\times P\to \mathbb{R}\times G$. Moreover, we can make the identification $M=Q\big\slash G=(\mathbb{R}\times G)\big\slash G\simeq \mathbb{R}$ and $\overline M=\overline Q \big\slash G={\mathbb R}\times P \big\slash G$.

When we suppose that $P$ trivializes, i.e., when there exists an open set $U\subset P\big\slash G$ so that $\pi^{-1}(U)\simeq U\times G$, then we may write $x=([x],g)$ for some $g\in G$.  

Let us now consider a fixed $[x_0] \in P\big\slash G$. We may then define
\begin{equation*}
    i_{[x_0]}\colon \mathbb{R}\longrightarrow \mathbb{R}\times P\big\slash G,\quad t\longmapsto (t,\bar{\gamma}_{[x_0]}(t)),
\end{equation*}
and
\begin{equation*}
    \hat{i}_{[x_0]}\colon \mathbb{R}\times G\longrightarrow \mathbb{R}\times P,\quad (t,g)\longmapsto (t,\gamma_{x_0}(t)).
\end{equation*}
One easily verifies that the map ${\hat i}_{[x_0]}$ is $G$-equivariant, because, for an invariant vector field $Y$, $\gamma_{g_1 x_0} = \Psi_{g_1}\circ\gamma_{x_0}$ for $g_1\in G$. Moreover, since $\pi\circ\gamma_{x_0} = \bar{\gamma}_{[x_0]}$, we have that $[\hat{i}_{[x_0]}] = {i}_{[x_0]}$, when considered as maps $\mathbb{R} \to \mathbb{R} \times P\big\slash G$. 



The diagram
\begin{equation*}
\begin{CD}
\mathrm{T}(\mathbb{R}\times G)@>\mathrm{T}{\hat{i}}_{[x_0]}>>\mathrm{T}(\mathbb{R}\times P)\\
@VVV @VVV\\
\mathbb{R}\times G@>>{\hat{i}}_{[x_0]}>\mathbb{R}\times P.\\
\end{CD}
\end{equation*}
commutes. This leads to quotient maps in the corresponding commutative diagram on the quotient manifolds
\begin{equation*}
\begin{CD}
\mathrm{T}(\mathbb{R}\times G)\big\slash G@>[\mathrm{T}{\hat{i}}_{[x_0]}]>>\mathrm{T}(\mathbb{R}\times P)\big\slash G\\
@VVV @VVV\\
\mathbb{R}@>> [\hat{i}_{[x_0]}] = {i}_{[x_0]}>\mathbb{R}\times P\big\slash G.\\
\end{CD}
\end{equation*}

Recall that now $A=\mathrm{T}(\mathbb{R}\times G)\big\slash G$ and $p$ is a map $\mathbb{R}\times P\big\slash G \to\mathbb{R}$. From \Cref{prop4}, we know that we may identify the $p$-prolongation $\overline A$ of $A$ with the Atiyah Lie algebroid $\mathrm{T}(\mathbb{R}\times P)\big\slash G$, and from \Cref{prop:inducedsectionAtiyah}, we know that the $i_{[x_0]}$-induced section $I$ on this Atiyah Lie algebroid is  $[\mathrm{T}\hat{i}_{[x_0]}]$.

Next, we show how the reduction and reconstruction problem can be recast as a problem involving integral manifolds of EDS on manifolds and Lie algebroids.

Let us assume that the coordinates $(x^i,u^a)$ are adapted to the principal fiber bundle $P\to P\big\slash G$. With the help of a principal connection we may horizontally lift the coordinate vector fields $\partial/\partial x^i$ on $P\big\slash G$ to invariant horizontal vector fields $H_i$ on $P$. We can now write
\begin{equation*}
    Y=Y^i H_i + Y^a \frac{\partial }{\partial u^a}, \qquad \overline Y = Y^i \frac{\partial }{\partial x^i}.
\end{equation*}
We first consider the EDS of forms on the manifold $\mathbb{R}\times P$ defined by semi-basic forms (with respect to the fiber bundle $\pi\colon Q\to Q \big\slash G$), i.e., ${\mathcal I}^{\rm{sb}} = \langle {\mathrm d}x^i-Y^i \, {\mathrm d}t\rangle$. Since these forms are also invariant, we may reinterpret this as an EDS on the Atiyah Lie algebroid $\mathrm{T}({\mathbb R \times P})\big\slash G \to \mathbb R \times P\big\slash G$ (for the obvious extension of the action on $P$ to $\mathbb R \times P$), given by
\begin{equation*}
    {\widehat{\mathcal I}}^{\rm{sb}} = \langle \widehat{{\mathrm d}x^i}-\widehat{Y}^i \, \widehat{{\mathrm d}t}\rangle = \langle \delta x^i-{\widehat Y}^i \, \delta t\rangle,
\end{equation*}
where $\widehat{\,\star\,}$ means their interpretation as maps $\mathbb{R}\times P\big\slash G\to \mathrm{T}^{\ast}(\mathbb{R}\times P)\big\slash G$  (in accordance with the notation we had introduced at the beginning of \Cref{sec:atiyahalgebroid}), and $\delta$ is the exterior derivative on the Atiyah Lie algebroid. The algebraic ideal generated by this $1$-form is, in fact a differential ideal on the Lie algebroid (so that ${\widehat{\mathcal I}}^{\rm{sb}}= \langle \delta x^i-{\widehat Y}^i \, \delta t\rangle_{\mathrm{alg}}$), because
\begin{equation*}
    \delta \left(\delta x^i-{\widehat Y}^i \, \delta t\right)  = - \frac{\partial {\widehat Y}^i}{\partial x^j} \, \delta x^j \wedge\delta t =  \frac{\partial {\widehat Y}^i}{\partial x^j} \, \delta t \wedge \left(\delta x^j-{\widehat Y}^j \, \delta t\right).   
\end{equation*}
Now, fix $[x_0]=(x^i_0) \in P\big\slash G$. Consider the integral manifold $i_{[x_0] }\colon \mathbb R \to \mathbb R \times P\big\slash G, t\mapsto (t,x^i(t))$ of ${\widehat{\mathcal I}}^{\rm{sb}}$. Then $x^i(t)$ solves the ODEs of the reduced vector field: ${\dot x}^i=Y^i(x)$, with $x^i(0)=x^i_0$.

On $P$ (and on $\mathbb R \times P$), we may consider the forms $\xi$ with the property that $\xi(H)=0$, for all horizontal vector fields $H$. If $\Gamma^a_i(x,u)$ are connection coefficients, the set of all such forms is spanned by
\begin{equation*}
    {\mathrm d}u^a - \Gamma^a_i \, {\mathrm d}x^i.
\end{equation*}
We can add them to ${\mathcal I}^{\rm{sb}}$, to obtain a new EDS on $\mathbb R \times P$, with
\begin{equation*}
    {\mathcal I}^{h} = \langle   {\mathrm d}x^i-Y^i \, {\mathrm d}t,  {\mathrm d}u^a - \Gamma^a_i \, {\mathrm d}x^i\rangle =   \langle   {\mathrm d}x^i-Y^i \, {\mathrm d}t,  {\mathrm d}u^a - \Gamma^a_iY^i \, {\mathrm d}t\rangle .
\end{equation*}
The algebraic ideal generated by these forms is in fact a differential ideal, since
\begin{equation*}
    \mathrm{d}({\mathrm d}x^i-Y^i \, {\mathrm d}t) =  \frac{\partial Y^i}{\partial x^j}   {\mathrm d}t \wedge \left({\mathrm d}x^j-Y^j \, {\mathrm d}t\right),
\end{equation*}
\begin{equation*}
    {\mathrm d}\left( {\mathrm d}u^a - \Gamma^a_iY^i \, {\mathrm d}t \right) = \left( \frac{\partial (\Gamma^a_iY^i)}{\partial x^j} \right){\mathrm d}t \wedge \left({\mathrm d}x^j -Y^j \, {\mathrm d}t\right) +\left( \frac{\partial(\Gamma^a_iY^i)}{\partial u^b} \right){\mathrm d}t\wedge \left({\mathrm d}u^b - \Gamma^b_kY^k \, {\mathrm d}t\right). 
\end{equation*}
If we now fix $x_0\in P$ (with $\pi(x_0)=[x_0]$, and thus $x_0=(x^i_0,u^a_0)$), and if we suppose that $i^h_{x_0}\colon \mathbb R \to P, t \mapsto (x^i_h(t), u^a_h(t)) $ is an integral manifold of ${\mathcal I}^{h}$, then $(i^h_{x_0})^{\ast} ({\mathrm d}x^i-Y^i  {\mathrm d}t) = 0$ and $(i^h_{x_0})^{\ast} ({\mathrm d}u^a - \Gamma^a_i{\mathrm d}x^i) = (i^h_{x_0})^{\ast} ({\mathrm d}u^a - \Gamma^a_iY^i{\mathrm d}t)  = 0$. Moreover, $x^i_h(t)$ is a solution of the same initial value problem, and therefore $x^i_h(t)=x^i(t)$ is again a solution ${\bar\gamma}_{[x_0]}$ of the reduced equations. The functions $u^a(t)$ satisfy
\begin{equation*}
    {\dot u}^a = \Gamma^a_i(x,u){\dot x}^i,
\end{equation*}
which are the differential equations that determine the horizontal lift $\bar{\gamma}_{x_0}^H$.

Now, we consider the EDS that corresponds to the vector field $Y$ on $P$. It consists of the forms
\begin{equation*}
    {\mathcal I} = \langle  {\mathrm d}x^i-Y^i \, {\mathrm d}t,  {\mathrm d}u^a - ( \Gamma^a_iY^i + Y^a) \, {\mathrm d}t\rangle,
\end{equation*}
on $\mathbb R \times P$. Let $\overline{\Phi}_g\coloneqq \mathrm{id}_{\mathbb{R}}\times \Psi_g$, which is an action on $\mathbb{R}\times P$. We show that we can find integral manifolds $i_{x_0}\colon \mathbb R \to \mathbb R \times P$ of the type $i_{x_0}=\overline{\Phi}_{g(t)}\circ i^h_{x_0}$, where $g(t)$ is a curve in $G$ through $e$ (here we regard $i^h_{x_0}$ as the map $\mathbb{R}\to \mathbb{R}\times P$, $t\mapsto (t,x_h^i(t),u_h^a(t))$). We show that $g(t)$ is (again) determined by the reconstruction equation.

First, observe that, since $(\overline{\Phi}_{g(t)}\circ i^h_{x_0})^{\ast} = (i^h_{x_0})^{\ast}\circ \overline{\Phi}_{g(t)}^{\ast}$, and since the forms ${\mathrm d}x^i-Y^i \, {\mathrm d}t$ are invariant, we already know that
\begin{equation*}
    (i_{x_0})^{\ast} ({\mathrm d}x^i-Y^i \, {\mathrm d}t) = ((i^h_{x_0})^{\ast}\circ \overline{\Phi}_{g(t)}^{\ast}) ({\mathrm d}x^i-Y^i \, {\mathrm d}t) = (i^h_{x_0})^{\ast}({\mathrm d}x^i-Y^i \, {\mathrm d}t) = 0.
\end{equation*}
It therefore remains to compute $(i_{x_0})^{\ast}({\mathrm d}u^a - ( \Gamma^a_iY^i + Y^a) \, {\mathrm d}t)$. This determines a differential equation on $g(t)$, which corresponds to the reconstruction equation. It would lead us too far to do it in the current generality. Therefore, we will limit ourselves here to an illuminating example.

\begin{example} {\bf A radially symmetric system.}   Consider the system on the punctured plane $N=\mathbb{R}^2\setminus \{(0,0)\}$ given by
\begin{equation}\label{eqn:systemcartesian}
    \begin{cases} \dot{x}=x+y(x^2+y^2), \\ \dot{y}=y-x(x^2+y^2). \end{cases}
\end{equation}
This can be written as an EDS (in the sense of manifolds) on $\mathbb{R}\times N$ by
\begin{equation*}
    \mathcal{I}=\langle \theta^1,\theta^2\rangle=\langle \mathrm{d}x-(x+y(x^2+y^2))\, \mathrm{d}t,\mathrm{d}y-(y-x(x^2+y^2))\, \mathrm{d}t\rangle.
\end{equation*}
Note that $G=\SO(2)$ is a symmetry group of this system, with the usual action
\begin{equation*}
    \Theta\colon G\times N\longrightarrow N,\quad \left(\begin{pmatrix} \cos(s) & -\sin(s) \\ \sin(s) & \cos(s) \end{pmatrix},(x,y)\right)\longmapsto (\cos(s)x-\sin(s)y,\sin(s)x+\cos(s)y).
\end{equation*}
The Lie algebra $\mathfrak{g}$ of $G$ is generated by
\begin{equation*}
    \mathbf{e}_1=\begin{pmatrix} 0 & -1 \\ 1 & 0 \end{pmatrix}.
\end{equation*}
The infinitesimal generator corresponding to $\mathbf{e}_1$ is given by
\begin{equation*}
    Z_1=-y\frac{\partial}{\partial x}+x\frac{\partial}{\partial y}.
\end{equation*}
The coordinate $r\coloneqq \sqrt{x^2+y^2}$ is invariant, hence $r$ can be regarded as a coordinate of $N\big\slash G\simeq \mathbb{R}_{>0}$. We now wish to rewrite $\mathcal{I}$ in terms of invariant forms which generate $\mathcal{I}$, i.e., we would like to find two independent pairs $(f,g)$, such that the Lie derivative satisfies $\mathcal{L}_{Z_1}(f\theta^1+g\theta^2)=0$ (here $f,g$ are functions from $N$ to $\mathbb{R}$). One solution is given by $(f(x,y),g(x,y))=\left(\frac{x}{\sqrt{x^2+y^2}},\frac{y}{\sqrt{x^2+y^2}}\right)$, and another by $(f(x,y),g(x,y))=\left(-\frac{y}{x^2+y^2},\frac{x}{x^2+y^2}\right)$. We then see that
\begin{equation*}
    \mathrm{d}r-r \, \mathrm{d}t=\frac{x}{\sqrt{x^2+y^2}}\theta^1+\frac{y}{\sqrt{x^2+y^2}}\theta^2,\quad \mathrm{d}\theta+r^2 \, \mathrm{d}t=-\frac{y}{x^2+y^2}\theta^1+\frac{x}{x^2+y^2}\theta^2,
\end{equation*}
where
\begin{equation*}
    \mathrm{d}r=\frac{x}{\sqrt{x^2+y^2}} \, \mathrm{d}x+\frac{y}{\sqrt{x^2+y^2}} \, \mathrm{d}y,\quad \mathrm{d}\theta=-\frac{y}{x^2+y^2} \, \mathrm{d}x+\frac{x}{x^2+y^2} \, \mathrm{d}y.
\end{equation*}
Hence, we can rewrite $\mathcal{I}$ in terms of invariant forms:
\begin{equation*}
    \mathcal{I}=\langle \mathrm{d}r-r \, \mathrm{d}t,\mathrm{d}\theta+r^2 \, \mathrm{d}t\rangle,
\end{equation*}
which corresponds to the system
\begin{equation}\label{eqn:systempolar}
    \begin{cases} \dot{r}=r, \\ \dot{\theta}=-r^2, \end{cases}
\end{equation}
on $P=\mathbb{R}_{>0}\times \mathbb{S}^1$, whose corresponding vector field $Y$ is given by $r\frac{\partial}{\partial r}-r^2\frac{\partial}{\partial\theta}$. 
The general solution of the system is given by
\begin{equation} \label{eq:solrtheta}
    r(t)=r_0 e^t,\quad \theta(t)=\theta_0+\frac{1}{2}r_0^2 (1-e^{2t}),
\end{equation}
with $r_0>0$ and $\theta_0\in \mathbb{R}$. 

When we consider $\mathcal I$ as an EDS on the manifold ${\mathbb R}\times P$, we can interpret the above solution as an integral manifold of $\mathcal I$ for each fixed pair $(r_0,\theta_0)\in P$,
\begin{equation*}
    {i}_{(r_0,\theta_0)}\colon \mathbb R \longrightarrow \mathbb R \times P,\quad t \longmapsto (t,r(t),\theta(t)).
\end{equation*}
The group $G$ acts on $P$ by
\begin{equation*}
    \Psi\colon G\times P\longrightarrow P,\quad \left(\begin{pmatrix} \cos(s) & -\sin(s) \\ \sin(s) & \cos(s) \end{pmatrix},(r,\theta)\right)\longmapsto (r,\theta+s).
\end{equation*}


We now consider the Atiyah Lie algebroid $A=\mathrm{T}(\mathbb{R}\times P)\big\slash G$. On this Lie algebroid we consider the exterior differential system given by
\begin{equation*}\label{eqn:laedsradial}
    \widehat{\mathcal{I}}^{\mathrm{sb}}=\langle \widehat{\mathrm{d}r}-r \, \widehat{\mathrm{d}t}\rangle,
\end{equation*}
where, again, $\widehat{\mathrm{d}r},\widehat{\mathrm{d}t}\colon \mathbb{R}\times P\big\slash G\to \mathrm{T}^{\ast}(\mathbb{R}\times P)\big\slash G$ are the   forms on the Lie algebroid $\mathrm{T}(\mathbb{R}\times P)\big\slash G$ that one may construct from the  invariant forms $\mathrm{d}r,\mathrm{d}t$  on the manifold $\mathbb{R}\times P$. We conclude that, indeed, finding integral manifolds  of  $\widehat{\mathcal{I}}^{\mathrm{sb}}$ on the Lie algebroid  corresponds to finding a solution of the reduced equation,
\begin{equation*}
    \dot r =r.
\end{equation*}
If we fix  $r_0>0$, we find one such reduced solution, with corresponding integral manifold
\begin{equation*}
    i_{r_0} \colon \mathbb{R}\longrightarrow \mathbb{R}\times P\big\slash G,\quad   t\longmapsto (t,\bar{\gamma}_{r_0}(t)=r_0e^t).
\end{equation*}
We use the connection $\omega\colon \mathrm{T}P\to \mathfrak{g}$, defined by
\begin{equation*}
    \omega\left(\frac{\partial}{\partial r}\right)=\begin{pmatrix} 0 & 0 \\ 0 & 0 \end{pmatrix},\quad \omega\left(\frac{\partial}{\partial \theta}\right)=\mathbf{e}_1,
\end{equation*}
to compute the horizontal lift $\bar{\gamma}^H_{(r_0,\theta_0)}$ of $\bar{\gamma}_{r_0}$ to $(r_0,\theta_0)$. Since $\pi\circ \bar{\gamma}^H_{(r_0,\theta_0)}=\bar{\gamma}_{r_0}$, it follows that $\bar{\gamma}^H_{(r_0,\theta_0)}$ is of the form
\begin{equation*}
    \bar{\gamma}_{(r_0,\theta_0)}^H(t)=(r_0e^t,\theta_1(t)),
\end{equation*}
where $\theta_1$ is a smooth function determined by the condition $\omega\left(\dot{\bar{\gamma}}^H_{(r_0,\theta_0)}(t)\right)=0$.
We find that
\begin{equation*}
    \dot{\bar{\gamma}}_{(r_0,\theta_0)}^H(t)=(r_0e^t,\theta_1(t),r_0e^t,\dot{\theta}_1(t)),
\end{equation*}
hence $\omega\left(\dot{\bar{\gamma}}^H_{(r_0,\theta_0)}(t)\right)=0$ if and only if $\theta_1$ is constant, meaning $\theta_1(t)=\theta_0$.

A curve in $G$ with $g(0)=\begin{pmatrix} 1 & 0 \\ 0 & 1 \end{pmatrix}$ is of the form $g(t)=\begin{pmatrix} \cos \epsilon(t) & -\sin \epsilon(t) \\ \sin \epsilon(t) & \cos \epsilon (t) \end{pmatrix}$ for some smooth function $\epsilon(t)$ with $\epsilon(0)=0$. With that, $(g(t))^{-1}\dot{g}(t)=\dot{\epsilon}\, \mathbf{e}_1$.  Since $\omega(Y\circ \bar{\gamma}^H_{(r_0,\theta_0)})=-r_0^2e^{2t} \mathbf{e}_1$, the reconstruction problem is given by 
\begin{equation}\label{eq:recexample}\dot{\epsilon}=-r_0^2e^{2t}
\end{equation}
Its solution with the given initial condition is $\epsilon(t)=\frac{1}{2}r_0^2(1-e^{2t})$. 

Computing $\Psi_{g(t)} \bar{\gamma}_{(r_0,\theta_0)}^H(t)$ gives the solutions to \Cref{eqn:systempolar}, and hence to \Cref{eqn:systemcartesian}. Here, we recover the expressions given in \eqref{eq:solrtheta}.


We now return to the equation $(i_{x_0})^{\ast}({\mathrm d}u^a - ( \Gamma^a_iY^i + Y^a){\mathrm d}t)=0$, for this example. Here, the connection coefficients vanish, $\Gamma^a_i=0$. With this, $Y^a = -r^2$. Moreover, in view of the above calculations, we have $i^h_{(r_0,\theta_0)}\colon t \mapsto(t,r_0e^t,\theta_0)$. Therefore $i_{(r_0,\theta_0)}\colon t \mapsto(t,r_0e^t,\theta_0+\epsilon(t))$. We get
\begin{equation*}
    (i_{x_0})^{\ast}({\mathrm d}u^a - ( \Gamma^a_iY^i + Y^a){\mathrm d}t)= (i_{(r_0,\theta_0)})^{\ast}({\mathrm d}\theta  +r^2{\mathrm d}t) = ({\dot\epsilon}+
 r_0^2e^{2t}){\mathrm d}t.
\end{equation*}
Clearly, setting this to zero gives the same reconstruction equation as in \eqref{eq:recexample}.





\end{example}

\section{An application to the invariant inverse problem}\label{sec:invariantinverse}
In this section, we deal with the following problem, which we will call the \emph{invariant inverse problem}: \emph{Given a (possibly time-dependent) $G$-invariant second-order system on a Lie group $G$, when does a (possibly time-dependent) $G$-invariant regular Lagrangian $L$ whose Euler--Lagrange equations are equivalent to this system exist?}

Some progress on this problem has been done in the time-independent case in \cite{barbero2016invproblemliealgebroids,crampin2008invariantinv,muzsnay2005invariantinv}. However, contrary to the original inverse problem for Lagrangian mechanics, exterior differential systems have never been used to solve this problem. We will show that the invariant inverse problem in the time-dependent case fits within the framework of EDS on Lie algebroids.

\subsection{The reduced Helmholtz conditions}

Let $n=\dim(G)$. Suppose that we are given a (time-dependent) regular invariant Lagrangian $L\in C^{\infty}(\mathbb{R}\times \mathrm{T}G)$. Its restriction to $\mathbb{R}\times \mathrm{T}_e G\simeq \mathbb{R}\times \mathfrak{g}$ is a function $l\in C^{\infty}(\mathbb{R}\times \mathfrak{g})$. Then, as proven in \cite{marsden1999eulerpoincare}, finding a solution $g(t)\in G$ of the Euler--Lagrange equations is equivalent to finding a solution $w(t)\in \mathfrak{g}$ of the Euler--Poincar{\'e} equations, given by
\begin{equation*}
    \frac{\mathrm{d}}{\mathrm{d}t}\left(\frac{\partial l}{\partial w^j}\right)=C_{ij}^k \frac{\partial l}{\partial w^k}w^i,
\end{equation*}
where $C_{ij}^k$ are the structure constants of $\mathfrak{g}$ corresponding to a basis used to define the coordinates. If we are given a solution $w(t)$ to the Euler--Poincar{\'e} equations, a solution to the Euler--Lagrange equations can be obtained by solving the reconstruction equation $g(t)^{-1}\dot{g}(t)=w(t)$ \cite{mestdag2008lagrangepoincare}.

Running a bit ahead of the preliminaries that we will introduce in the next subparagraph, we already make the following definition:

\begin{definition}\label{def:reducedmultiplier}
Let $\gamma\in \mathcal{X}(\mathbb{R}\times \mathfrak{g})$ be as in \eqref{smallgamma}, and let $(k_{ij})$ be a matrix of functions on $\mathbb{R}\times \mathfrak{g}$. We say that $(k_{ij})$ is a \emph{reduced multiplier matrix} for $\gamma$ if it satisfies the following conditions:
\begin{equation*}
    \det(k_{ij})\neq 0,\quad k_{ij}=k_{ji},
\end{equation*}
\begin{equation*}
    \gamma(k_{ij})-k_{kj}\lambda_i^k-k_{ik}\lambda_j^k=0,
\end{equation*}
\begin{equation*}
    k_{ki}\phi_j^k=k_{kj}\phi_i^k,
\end{equation*}
\begin{equation*}
    \frac{\partial k_{jk}}{\partial w^i}=\frac{\partial k_{ik}}{\partial w^j}.
\end{equation*}
We call the above equations in $k_{ij}$ the \emph{reduced Helmholtz conditions}.
\end{definition}

In contrast to the original inverse problem, where the existence of a multiplier matrix is both necessary and sufficient to ensure the existence of a Lagrangian, the relationship between reduced multipliers and invariant Lagrangians is more complicated. While the existence of a reduced multiplier matrix is sufficient to guarantee the existence of a Lagrangian, the Lagrangian may not necessarily be invariant. To determine whether or not an invariant Lagrangian exists, an additional condition is needed.


The next proposition extends a similar statement for the autonomous case in \cite{crampin2008invariantinv} to the time-dependent case.

\begin{proposition}\label{prop:cohomology} A (time-dependent) reduced multiplier matrix $k_{ij}$ for $\gamma$ determines, for each value of $t$, a cohomology class  in  $H^2(\mathfrak{g})$. If these classes vanish, the multiplier matrix also determines, for each value of $t$, a cohomology class  in $H^1(\mathfrak{g})$. If they also vanish, the vector field $\gamma$ is derivable from an invariant Lagrangian. \\
Conversely, if the vector field $\gamma$ is derivable from an invariant Lagrangian all the above  cohomology classes vanish.
\end{proposition}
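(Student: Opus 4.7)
The plan is to adapt the autonomous argument of \cite{crampin2008invariantinv} by treating $t$ as an external parameter, so that the cohomology classes arising at each stage live in $H^\ast(\mathfrak{g})$ for each fixed $t$. The general strategy is to try to integrate a reduced multiplier $k_{ij}$ to a reduced Lagrangian $l(t,w)$ stepwise, and to identify the primary and secondary obstructions as Chevalley--Eilenberg cohomology classes on $\mathfrak{g}$.

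First, I would use the symmetry condition $k_{ij}=k_{ji}$ together with $\partial k_{jk}/\partial w^i=\partial k_{ik}/\partial w^j$ and a fiberwise Poincar\'e lemma on $\mathbb{R}\times \mathfrak{g}\to \mathbb{R}$ to construct functions $\alpha_i(t,w)$ with $\partial \alpha_i/\partial w^j=k_{ij}$, unique up to addition of a $w$\nobreakdash-independent term $\beta_i(t)$. For $\alpha_i$ to be of the form $\partial l/\partial w^i$ for an $l(t,w)$ whose Euler--Poincar\'e equations reproduce $\gamma$, one needs the quantity
\begin{equation*}
  h_j(t,w)\coloneqq \gamma(\alpha_j)-C_{ij}^k\alpha_k w^i
\end{equation*}
to vanish identically. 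So the proof reduces to analyzing this $h_j$.

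Next, I would differentiate $h_j$ once and twice in $w$ and substitute both the second Helmholtz condition $\gamma(k_{ij})=k_{kj}\lambda_i^k+k_{ik}\lambda_j^k$ and the third $k_{ki}\phi_j^k=k_{kj}\phi_i^k$, together with the Jacobi identity in $\mathfrak{g}$ (which enters through the structure constants $C_{ij}^k$). The expected outcome is that $h_j$ is polynomial in $w$ of low degree, and that its top-order coefficient defines, for each $t$, a $2$-cochain $\Omega(t)\colon \mathfrak{g}\wedge \mathfrak{g}\to \mathbb{R}$ whose cocycle property ($\delta\Omega(t)=0$) follows precisely from these Helmholtz identities. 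The class $[\Omega(t)]\in H^2(\mathfrak{g})$ is then the first obstruction. If it vanishes for all $t$, then $\Omega(t)=\delta\mu(t)$ for some smooth family of $1$-cochains $\mu(t)$, and redefining $\alpha_i$ by the linear-in-$w$ term built from $\mu(t)$ kills the top-degree part of $h_j$. The remaining piece of $h_j$ is a $1$-cochain on $\mathfrak{g}$ whose cocycle condition is forced by what is left of the Helmholtz system; its class in $H^1(\mathfrak{g})$ is the secondary obstruction, and when it also vanishes, adjusting $\alpha_i$ by a further $\beta_i(t)$ kills $h_j$ entirely. A last fiber-integration then produces $l(t,w)$. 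The converse is immediate: if such $l$ exists, setting $\alpha_i=\partial l/\partial w^i$ makes $h_j\equiv 0$, so both cochains vanish identically, and a fortiori their cohomology classes are zero.

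The main obstacle will be the explicit identification of $\Omega(t)$ and the verification of its cocycle identity. Expanding $h_j$ produces a mixture of quadratic, linear, and constant terms in $w$, and matching coefficients of the various $w^iw^k$ and $w^i$ monomials to Chevalley--Eilenberg cochain formulas requires careful bookkeeping; in particular, the time derivative $\gamma$ combines $\partial/\partial t$ with the drift $F^j\partial/\partial w^j$, and one must check that this does not spoil the $w$-degree analysis or introduce spurious $t$-dependent coboundaries. A secondary subtlety is to make the adjustments $\mu(t)$ and $\beta_i(t)$ depend smoothly on $t$; this is the reason the classes are stated \emph{for each value of $t$} rather than as a single class, and smoothness follows from the parametric version of the Poincar\'e lemma applied in the first step.
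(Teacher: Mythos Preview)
Your proposal follows essentially the same route as the paper's proof: construct a function whose fibre Hessian is $k_{ij}$, form the Euler--Poincar\'e residual, show it is affine in $w$, and read off successive obstructions in $H^2(\mathfrak{g})$ and $H^1(\mathfrak{g})$. The paper simply starts with an $l$ having Hessian $k_{ij}$ (rather than your intermediate $\alpha_i$), writes $V_j=\mu_{ij}w^i+\nu_j$ with $\mu_{ij}(t),\nu_j(t)$ independent of $w$, records the identities $\dot\mu_{ij}+C^l_{ij}\nu_l=0$ and $\sum_{\mathrm{cyclic}}\mu_{il}C^l_{jk}=0$, and then uses the single gauge freedom $l\mapsto l+\theta_k(t)w^k$ to kill first $\mu$ (the $H^2$ step) and then $\nu$ (the $H^1$ step).

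One small slip in your write-up: the freedom that kills the $H^2$ obstruction acts on $\alpha_i$ by adding a \emph{constant}-in-$w$ term $\theta_i(t)$, not a linear one; it is only at the level of $l$ that the correction $\theta_k(t)w^k$ is linear in $w$. A genuinely linear-in-$w$ shift of $\alpha_i$ would alter $k_{ij}=\partial\alpha_i/\partial w^j$, which is not permitted. With this adjustment your two-step description (first $\theta_i(t)$, then a further $Z^1$-valued correction) matches the paper exactly.
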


\begin{proof} For this proof, we use the same notation and terminology as in Thm.~4 of \cite{crampin2008invariantinv} and we limit ourselves here to an adjustment of their reasoning from the autonomous to the time-dependent case.

We denote the Euler--Poincar\'e expressions by
\begin{equation*}
    V_i\coloneqq \gamma\left(\frac{\partial l}{\partial w^j}\right)-C_{ij}^k \frac{\partial l}{\partial w^k}w^i.
\end{equation*}
 If we assume that the reduced Helmholtz conditions are satisfied, then the same reasoning as in \cite{crampin2008invariantinv} shows that the expressions $V_i$ of any function $l$ whose Hessian is the multiplier $k_{ij}$ are of type $V_i=\mu_{ji}w^j+\nu_i$ for some functions $\mu_{ij}(t)$ and $\nu_i(t)$ that, moreover, satisfy
\begin{equation} \label{coh}
    {\dot\mu}_{ij} + C^l_{ij}\nu_l = 0,\qquad  \sum_{{\rm{cyclic}}(i,j,k)} \mu_{il}C^l_{jk} = 0.
\end{equation}
We have the freedom to change $l$ into $l'=l+\theta_k(t) w^k$, and keep the same Hessian. In that case
\begin{equation*}
    \nu_i'=\nu_i +{\dot\theta}_i, \qquad \mu_{ij}'=\mu_{ij}-\theta_k C^k_{ij}.
\end{equation*}
One easily checks that the conditions \eqref{coh} are invariant under the above changes. 


We conclude that, in order for $V_i \equiv 0$, we need to find $\theta_i(t)$ such that both $\nu_i'=0$ and $\mu_{ij}'=0$. This will be the case when
\begin{equation*} \label{timecoh} 
\left\{\begin{array}{l}{\dot\mu}_{ij}  + C^l_{ij}\nu_l = 0, \\ \sum_{{\rm{cyclic}}(i,j,k)} \mu_{il}C^l_{jk} = 0.\end{array} \right. \qquad \iff \qquad \mbox{$\exists\, \theta_k(t)$ s.t. } \left\{\begin{array}{l} \nu_i = - {\dot\theta}_i, \\ \mu_{ij}=\theta_ k C^k_{ij}. \end{array} \right.
\end{equation*}
We may regard $t$ as a parameter. The last equation says that the multiplier $(k_{ij})$ determines, for each $t$, a cohomology class $(\mu_{ij})$ in $H^2(\mathfrak{g})$. If we assume that it vanishes for each $t$, there exists $\theta_k(t)$ such that $\mu_{ij}(t) = \theta_k(t)C^k_{ij}$. The first equation then becomes $( {\dot\theta}_l +\nu_l )C^l_{ij} = 0$. This means that, under the first assumption,  we again get for each $t$ a cohomology class $\xi_l={\dot\theta}_l +\nu_l$ in $H^1(\mathfrak{g})$. If this class also vanishes for each $t$, then we have  $\nu_i = - {\dot\theta}_i$, as required.
\end{proof}

In the remainder of this paper, it is our goal to show that the existence of a reduced multiplier matrix is equivalent to finding integral manifolds of some differential ideal on a Lie algebroid.

\subsection{The  IP Lie algebroid}\label{subsec:liealgebroidinverse}

Let $\{E_i\}$ be a basis of the Lie algebra of $\mathfrak{g}$. By left-translating this basis, we obtain a left-invariant basis $\{\widehat{E}_i\}$ of $\mathcal{X}(G)$. Similarly, denote by $\{\widetilde{E}_i\}$ the set of vector fields in $\mathcal{X}(G)$ obtained by right-translating the basis $\{E_i\}$. The bases are related by
\begin{equation*}
    \widehat{E}_i(g)=A_i^j(g)\widetilde{E}_j(g),
\end{equation*}
where $(A_i^j(g))$ is the matrix representation of the adjoint representation $\Ad_g$. In particular, this matrix satisfies $A_i^j(e)=\delta_i^j$, where $e\in G$ is the identity element. We now identify the Lie algebra with the set of left-invariant vector fields. Then
\begin{equation*}
    [\widehat{E}_i,\widehat{E}_j]=C_{ij}^k \widehat{E}_k,\quad [\widetilde{E}_i,\widetilde{E}_j]=-C_{ij}^k \widetilde{E}_k.
\end{equation*}
In the following, we denote by $(w^i)$ the coordinates of a vector $v_g\in \mathrm{T}_g G$ with respect to the basis $\{\widehat{E}_i\}$, i.e.,
\begin{equation*}
    v_g=w^i \widehat{E}_i(g).
\end{equation*}

The vector field $Z=B\frac{\partial}{\partial t}+\Xi^j \widehat{E}_j^C+F^j \widehat{E}_j^V$ on $\mathbb{R}\times \mathrm{T}G$ is left-invariant if and only if $[\widetilde{E}_i^C,Z]=0$ for all $i\in \{1,\dots,n\}$. By left-invariance of $\partial/\partial t$ and $\widehat{E}_j$, one can observe that
\begin{equation*}
    [\widetilde{E}_i^C,Z]=\widetilde{E}_i^C(B)\frac{\partial}{\partial t}+\widetilde{E}_i^C(\Xi^j)\widehat{E}_j^C+\widetilde{E}_i^C(F^j)\widehat{E}_j^V.
\end{equation*}
Since $\{\frac{\partial}{\partial t},\widehat{E}_j^C, \widehat{E}_j^V\}$ is a basis for the vector fields on $\mathbb{R}\times \mathrm{T}G$, it follows that $Z$ is left-invariant if and only if
\begin{equation*}
    \widetilde{E}_i^C(B)=\widetilde{E}_i^C(\Xi^j)=\widetilde{E}_i^C(F^j)=0,
\end{equation*}
which means that $B$, $\Xi^j$ and $F^j$ are invariant functions. These functions therefore project to functions $b$, $\xi^j$ and $f^j$ on $(\mathbb{R}\times \mathrm{T}G)\big\slash G\simeq \mathbb{R}\times \mathfrak{g}$, which satisfy
\begin{equation*}
    B=b\circ \pi, \quad \Xi^j=\xi^j\circ \pi,\quad F^j=f^j\circ \pi,
\end{equation*}
where $\pi\colon \mathbb{R}\times \mathrm{T}G\to \mathbb{R}\times \mathfrak{g}$ is the natural projection.

A set $\{\xi^j\}$ of $n=\dim(\mathfrak{g})$ functions on $\mathbb{R}\times \mathfrak{g}$ can be interpreted in two equivalent ways. First, the elements of the set could be viewed as the coefficients of a $C^{\infty}(\mathbb{R}\times \mathfrak{g},\mathfrak{g})$-map, namely the map
\begin{equation*}
    \xi\colon (t,w)\longmapsto \xi^i(t,w)E_i.
\end{equation*}
Using this interpretation, we define $\mathfrak{g}$-valued functions $\xi$ and $\eta$ from $\{\xi^j\}$ and $\{\eta^j\}$ respectively. Second, they can be viewed as components of the vector field $\bar{\xi}=\xi^j \partial/\partial w^j$ on $\mathbb{R}\times \mathfrak{g}$.

A time-dependent second-order differential equation field on $G$ can be written in the form
\begin{equation*}
    \Gamma=\frac{\partial}{\partial t}+w^i \widehat{E}_i^C+\Gamma^i \widehat{E}_i^V.
\end{equation*}
If $\Delta$ is the identity map in $C^{\infty}(\mathbb{R}\times \mathfrak{g},\mathbb{R}\times \mathfrak{g})$, then an invariant $\Gamma$ reduces to the section $(\Delta,\gamma)$ of $\mathbb{R}\times \mathfrak{g}\times \mathrm{T}(\mathbb{R}\times \mathfrak{g})\to \mathbb{R}\times \mathfrak{g}$, where
\begin{equation}\label{smallgamma}
    \gamma=\frac{\partial}{\partial t}+\gamma^i \frac{\partial}{\partial w^i},
\end{equation}
is called the \emph{reduced vector field}.

We are now ready to define the Lie algebroid of interest. We first define its vector bundle $A\to M$ to be
\begin{equation}\label{eqn:liealgebroidinvprob}
    A=\mathbb{R}\times \mathfrak{g}\times \mathbb{R}\times \mathfrak{g}\times \mathfrak{g}\longrightarrow M=\mathbb{R}\times \mathfrak{g}.
\end{equation}
Its sections are given by maps of the form
\begin{equation*}
    (t,w)\longmapsto (t,w,b(t,w),\xi(t,w),\eta(t,w)).
\end{equation*}
A basis of sections of this bundle is given by $\{T_0,e_i,W_i\}$, where
\begin{equation*}
    T_0\colon (t,w)\longmapsto (t,w,1,0,0),
\end{equation*}
\begin{equation*}
    e_i\colon (t,w)\longmapsto (t,w,0,E_i,0),\quad W_i\colon (t,w)\longmapsto (t,w,0,0,E_i).
\end{equation*}
We claim that this vector bundle also admits the structure of a Lie algebroid. We define an anchor $\rho$ by
\begin{equation*}
    \rho(b,\xi,X)=b\frac{\partial}{\partial t}+\mathcal{A}(\bar{\xi})+X\in \mathcal{X}(M),
\end{equation*}
where $\mathcal{A}$ is a type $(1,1)$-tensor field on $M$ given by
\begin{equation*}
    \mathcal{A}=w^k C_{kj}^l \frac{\partial}{\partial w^l}\otimes \mathrm{d}w^j.
\end{equation*}
The action of $\rho$ on the sections $\{T_0,e_i,W_i\}$ of $A\to M$ is given by
\begin{equation}\label{IPalganchor}
    \rho(T_0)=\frac{\partial}{\partial t},\quad \rho(e_i)=w^k C_{ki}^j \frac{\partial}{\partial w^j},\quad \rho(W_i)=\frac{\partial}{\partial w^i}.
\end{equation}
Taking inspiration from the identities (cf. \Cref{subsec:evolution})
\begin{equation*}
    [\widehat{E}_i^C,\widehat{E}_j^C]=C_{ij}^k \widehat{E}_k^C,\quad [\widehat{E}_i^C,\widehat{E}_j^V]=C_{ij}^k \widehat{E}_k^V,\quad [\widehat{E}_i^V,\widehat{E}_j^V]=0,
\end{equation*}
we define a bracket given by
\begin{equation} \label{IPalgbracket1}
    \llbracket T_0,e_i\rrbracket=0,\quad \llbracket T_0,W_i\rrbracket=0,
\end{equation}
\begin{equation} \label{IPalgbracket2}
    \llbracket e_i,e_j\rrbracket=C_{ij}^k e_k,\quad \llbracket e_i,W_j\rrbracket=C_{ij}^k W_k,\quad \llbracket W_i,W_j\rrbracket=0.
\end{equation}

\begin{definition} \label{def:IPalgebroid}
The \emph{IP Lie algebroid} is the Lie algebroid that is defined on the vector bundle \eqref{eqn:liealgebroidinvprob} by means of the anchor \eqref{IPalganchor} and the bracket relations \eqref{IPalgbracket1} and \eqref{IPalgbracket2}.
\end{definition}

The  IP Lie algebroid we have introduced above is a generalization (from the autonomous to the time-dependent context) of a Lie algebroid that already appeared in \cite{crampin2008invariantinv} (see its Section 7, for an intrinsic definition). There, it was pointed out that this current Lie algebroid is not a \emph{prolongation Lie algebroid}, although such a Lie algebroid is often used in the context of Lagrangian systems on Lie algebroids (see e.g.,  \cite{martinez2001lagrangeliealgebroid}). Typically, the prolongation Lie algebroid satisfies the bracket relation $\llbracket e_i,W_j\rrbracket=0$, whereas in our case we have $\llbracket e_i,W_j\rrbracket=C_{ij}^k W_k$. Although $A$ itself is not a prolongation Lie algebroid, we will nevertheless make use of a prolongation when we consider the Lie algebroid $\overline A$ of \Cref{sec:pullback}.

One can verify that the horizontal lift of $\widehat{E}_i$ satisfies
\begin{equation*}
    \widehat{E}_i^H=\widehat{E}_i^C-\Lambda_i^j\widehat{E}_j^V,\quad \Lambda_i^j\coloneqq \frac{1}{2}\left(w^k C_{ki}^j-\widehat{E}_i^V(\Gamma^j)\right).
\end{equation*}
Hence, we define sections $\Gamma_0\coloneqq T_0+w^j e_j+\gamma^j W_j$ and $H_i\coloneqq e_i-\lambda_i^j W_j$, where
\begin{equation*}
    \lambda_i^j\coloneqq
    \frac{1}{2}\left(w^k C_{ki}^j-\frac{\partial \gamma^j}{\partial w^i}\right).
\end{equation*}
The sections $\Gamma_0,H_i,W_i$ form a basis of sections of $A\to M$. The anchor map on these sections satisfy
\begin{equation*}
    \rho(\Gamma_0)=\gamma,\quad \rho(H_i)=\psi_i^j \frac{\partial}{\partial w^j},
\end{equation*}
where
\begin{equation*}
    \psi_i^j\coloneqq w^k C_{ki}^j-\lambda_i^j=\frac{1}{2}\left(w^k C_{ki}^j+\frac{\partial \gamma^j}{\partial w^i}\right).
\end{equation*}
After some computation, we can obtain expressions for the brackets on this new basis (compare with the expressions in \Cref{prop:bracketsstandard1} and \Cref{prop:bracketsstandard2}).
\begin{lemma}\label{lem:brackets} On the  IP Lie algebroid, we have that
\begin{equation*}
    \llbracket \Gamma_0,W_i\rrbracket=-H_i+\lambda_i^j W_j,\quad \llbracket \Gamma_0,H_i\rrbracket=\lambda_i^j H_j+\phi_i^j W_j,
\end{equation*}
\begin{equation*}
    \llbracket H_i,W_j\rrbracket=(C_{ij}^k+\lambda_{ij}^k) W_k,\quad \llbracket H_i,H_j\rrbracket=C_{ij}^k H_k+r_{ij}^k W_k,
\end{equation*}
where
\begin{equation*}
    \phi_i^j\coloneqq -w^k C_{ki}^l \frac{\partial \gamma^j}{\partial w^l}-\gamma^k C_{ik}^j-\gamma(\lambda_i^j)-\lambda_i^k\lambda_k^j,\qquad \lambda_{ij}^k\coloneqq \frac{\partial \lambda_i^k}{\partial w^j},
\end{equation*}
\begin{equation*}
    r_{ij}^k\coloneqq \left(w^l C_{lj}^m \lambda_{im}^k-w^l C_{li}^m \lambda_{jm}^k+\lambda_i^l \lambda_{jl}^k-\lambda_j^l \lambda_{il}^k\right)+C_{ij}^l \lambda_l^k+\lambda_i^l C_{jl}^k-\lambda_j^l C_{il}^k.
\end{equation*}
In particular, $r_{ij}^k=-r_{ji}^k$.
\end{lemma}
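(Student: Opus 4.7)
The plan is to obtain each of the four bracket relations by expanding $\Gamma_0$ and $H_i$ in the original basis $\{T_0, e_i, W_i\}$, and then applying bilinearity together with the Lie algebroid Leibniz rule $\llbracket \sigma_1, f\sigma_2\rrbracket = \rho(\sigma_1)(f)\sigma_2 + f\llbracket \sigma_1,\sigma_2\rrbracket$ (and its mirrored version $\llbracket f\sigma_1, \sigma_2\rrbracket = -\rho(\sigma_2)(f)\sigma_1 + f\llbracket\sigma_1, \sigma_2\rrbracket$), using the brackets \eqref{IPalgbracket1}--\eqref{IPalgbracket2} and the anchor values \eqref{IPalganchor}. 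At the end of each computation I would rewrite everything back in terms of $\{H_i, W_i\}$ using $e_i = H_i + \lambda_i^jW_j$, and collect terms.

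Concretely, for $\llbracket \Gamma_0, W_i\rrbracket$ I would expand $\Gamma_0 = T_0 + w^j e_j + \gamma^j W_j$. The terms $\rho(W_i)(w^j) = \delta_i^j$ and $\rho(W_i)(\gamma^j)=\partial\gamma^j/\partial w^i$ produce exactly $-e_i + (w^j C_{ji}^k - \partial\gamma^k/\partial w^i)W_k$, which rewritten via $e_i = H_i + \lambda_i^j W_j$ and $2\lambda_i^k = w^j C_{ji}^k - \partial\gamma^k/\partial w^i$ gives the stated formula. For $\llbracket \Gamma_0, H_i\rrbracket$ I first compute $\llbracket \Gamma_0, e_i\rrbracket$: the two $e$-terms cancel by the anchor identity, leaving a pure $W$-contribution; then I subtract $\llbracket \Gamma_0, \lambda_i^j W_j\rrbracket$, using step (1), noting $\rho(\Gamma_0)=\gamma$. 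The result matches $\lambda_i^j H_j + \phi_i^j W_j$ once all definitions are put in.

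For $\llbracket H_i, W_j\rrbracket$ the computation is short: $\llbracket e_i, W_j\rrbracket = C_{ij}^k W_k$, and the subtracted term contributes $\partial\lambda_i^k/\partial w^j\, W_k = \lambda_{ij}^k W_k$. For the main case $\llbracket H_i, H_j\rrbracket$ I would expand the four pieces using $\llbracket f\sigma_1, g\sigma_2\rrbracket = fg\llbracket\sigma_1,\sigma_2\rrbracket + f\rho(\sigma_1)(g)\sigma_2 - g\rho(\sigma_2)(f)\sigma_1$. This is where the bookkeeping is heaviest: the $e$-part produces $C_{ij}^k e_k = C_{ij}^k H_k + C_{ij}^k\lambda_k^l W_l$, the cross-terms $\llbracket e_i, \lambda_j^l W_l\rrbracket$ and $\llbracket\lambda_i^k W_k, e_j\rrbracket$ each produce both $\rho(e)(\lambda)$-terms and $\lambda \llbracket e, W\rrbracket$-terms, and the $\llbracket \lambda_i^k W_k, \lambda_j^l W_l\rrbracket$-term produces $\lambda_i^k\lambda_{jk}^l W_l - \lambda_j^l\lambda_{il}^k W_k$ (the double bracket of $W$'s vanishes). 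Collecting all $W$-coefficients after renaming the dummy indices to a common free index $k$ yields precisely $r_{ij}^k$ as stated.

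The main obstacle is purely organizational: keeping the many index renamings and sign conventions straight, and in particular being careful with the asymmetric Leibniz rule when the scalar sits on the left. For the antisymmetry $r_{ij}^k = -r_{ji}^k$ there is actually a clean argument that avoids inspecting each summand: the Lie algebroid bracket is antisymmetric, and $C_{ij}^k H_k$ is antisymmetric in $(i,j)$, so $r_{ij}^kW_k = \llbracket H_i,H_j\rrbracket - C_{ij}^kH_k$ must also be antisymmetric, and since $\{W_k\}$ is a basis this forces $r_{ij}^k = -r_{ji}^k$. (As a cross-check, one can also verify this directly by swapping $i\leftrightarrow j$ in the explicit expression, where each pair of terms swaps into its negative.)
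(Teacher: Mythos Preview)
Your proposal is correct and follows exactly the approach the paper intends: the paper simply states ``After some computation, we can obtain expressions for the brackets on this new basis'' without giving details, and the direct expansion via the Leibniz rule that you outline is precisely that computation. Your observation that $r_{ij}^k=-r_{ji}^k$ follows immediately from the antisymmetry of the Lie algebroid bracket (rather than term-by-term inspection) is a clean way to handle the final claim.
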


The above expressions for $\lambda^j_i$ and $\phi^j_i$ are the ones we need in \Cref{def:reducedmultiplier}.

\subsection{The two-form}

In this subsection, we prove the following result:
\begin{theorem}\label{thm:twoform}
A reduced multiplier matrix for $\gamma$ exists if and only if there exists a two-form $\Omega\in \Lambda^2(A)$ such that
\begin{enumerate}
    \item $\Omega$ is of maximal rank, in the sense that the $n$-fold wedge product $\Omega\wedge \dots\wedge \Omega$ is nowhere vanishing, 
    \item $\Omega(W_i,W_j)=0$,
    \item $\iota_{\Gamma_0} \Omega=0$,
    \item $\delta \Omega=0$.
\end{enumerate}
\end{theorem}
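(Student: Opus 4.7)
The plan is to work in the dual basis $\{\theta^0,\theta^i,\psi^i\}$ of $\{\Gamma_0,H_i,W_i\}$ on the IP Lie algebroid. Starting from the bracket relations in \Cref{lem:brackets} and the general formulas \eqref{eqn:structure}, my first step would be to derive the structure equations
\begin{align*}
\delta\theta^0 &= 0,\\
\delta\theta^i &= \theta^0\wedge\psi^i -\lambda^i_j\,\theta^0\wedge\theta^j -\tfrac{1}{2}C^i_{jk}\,\theta^j\wedge\theta^k,\\
\delta\psi^i &= -\phi^i_j\,\theta^0\wedge\theta^j -\lambda^i_j\,\theta^0\wedge\psi^j -\tfrac{1}{2}r^i_{jk}\,\theta^j\wedge\theta^k -(C^i_{jk}+\lambda^i_{jk})\,\theta^j\wedge\psi^k,
\end{align*}
so that everything that follows reduces to bookkeeping in these components.

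Conditions (2) and (3) immediately constrain $\Omega$ to the form $\Omega=\tfrac{1}{2}A_{ij}\theta^i\wedge\theta^j + B_{ij}\theta^i\wedge\psi^j$ with $A_{ij}=-A_{ji}$: condition (3) kills every term containing $\theta^0$, and condition (2) kills every $\psi\wedge\psi$ term. The top form $\Omega^n$ then contains only the diagonal contribution $n!\det(B_{ij})\,\theta^1\wedge\cdots\wedge\theta^n\wedge\psi^1\wedge\cdots\wedge\psi^n$, so condition (1) is equivalent to $\det(B_{ij})\neq 0$.

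The heart of the proof is then to expand $\delta\Omega$ in the basis of $3$-forms and match coefficients. I would extract, in turn, from the coefficient of $\theta^0\wedge\psi^a\wedge\psi^b$ the equation $B_{ab}=B_{ba}$, which justifies setting $k_{ij}:=B_{ij}$ and yields the symmetry Helmholtz condition; from $\theta^a\wedge\psi^b\wedge\psi^c$ the derivative symmetry $\partial k_{ab}/\partial w^c=\partial k_{ac}/\partial w^b$; from $\theta^0\wedge\theta^a\wedge\psi^b$ the time-derivative condition $\gamma(k_{ab})-k_{ib}\lambda^i_a-k_{aj}\lambda^j_b=0$; and from $\theta^a\wedge\theta^b\wedge\psi^c$ together with $\theta^0\wedge\theta^a\wedge\theta^b$ two equations determining $\partial A_{ab}/\partial w^c$ and $\gamma(A_{ab})$ in terms of $A$, $k$, $\phi$ and $\lambda$. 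Requiring that these two prescriptions for $A_{ab}$ are mutually compatible --- equivalently, that $\partial_{w^c}(\gamma A_{ab})$ has two consistent evaluations --- reduces, once the three already-extracted Helmholtz conditions are substituted, to the remaining reduced Helmholtz condition $k_{ki}\phi^k_j=k_{kj}\phi^k_i$.

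For the converse, given $k_{ij}$ satisfying all four reduced Helmholtz conditions, I would set $B_{ij}:=k_{ij}$ and construct $A_{ij}$ by integrating the prescribed $\partial A_{ab}/\partial w^c$ in $w$; the Helmholtz conditions collectively guarantee that the resulting system is integrable and that the prescribed $\gamma(A_{ab})$ is then automatically satisfied, yielding $\delta\Omega=0$. The main obstacle is exactly this compatibility bookkeeping in the forward direction: disentangling the equations for $A$ and showing that, modulo the other three Helmholtz conditions, their compatibility is precisely the $\phi$-condition. A useful shortcut would be to exploit $\delta^2=0$ applied to $\theta^i$ and $\psi^i$, yielding quadratic identities among the structure coefficients $\lambda$, $\phi$, $r$ (which encode the Jacobi identity underlying \Cref{lem:brackets}) that can short-circuit the integrability calculation.
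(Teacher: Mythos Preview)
Your overall strategy---expand $\delta\Omega$ in the adapted coframe and read off the Helmholtz conditions---is the same as the paper's, but you miss the key simplification that makes the bookkeeping manageable: the skew part $A_{ij}$ must vanish.  The paper observes that conditions (3) and (4) together give $\mathcal{L}_{\Gamma_0}\Omega=0$ via Cartan's formula, and then computes $\mathcal{L}_{\Gamma_0}\Omega$ directly.  The coefficient of $\Psi^i\wedge\Theta^j$ in that expression is
\[
\gamma(k_{ij}) + 2l_{ij} - \lambda^k_i k_{kj} - \lambda^k_j k_{ik},
\]
where $l_{ij}$ is your $\tfrac12 A_{ij}$.  Once $k_{ij}=k_{ji}$ is in hand (from the $\Psi\wedge\Psi$ coefficient), the skew-symmetric part of this equation forces $l_{ij}=0$.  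So $\Omega=k_{ij}\Psi^i\wedge\Theta^j$ with no $\Theta\wedge\Theta$ piece at all.

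This matters for your argument in two places.  First, your claimed extraction of the time-derivative Helmholtz condition from the $\theta^0\wedge\theta^a\wedge\psi^b$ coefficient is incorrect as stated: that coefficient contains a $2l_{ab}$ term, so you do not get $\gamma(k_{ab})-k_{ib}\lambda^i_a-k_{aj}\lambda^j_b=0$ until \emph{after} you have shown $l=0$.  Second, once $l=0$, the $\phi$-condition drops out immediately from $\delta\Omega(\Gamma_0,H_i,H_j)=k_{ki}\phi^k_j-k_{kj}\phi^k_i$; there is no need for the roundabout compatibility analysis of the $A$-equations that you propose.  Likewise, for the converse you should simply take $A_{ij}=0$ and $\Omega=k_{ij}\Psi^i\wedge\Theta^j$: the paper's component computations of $\delta\Omega$ on the triples $(\Gamma_0,W_i,W_j)$, $(\Gamma_0,W_i,H_j)$, $(\Gamma_0,H_i,H_j)$, $(W_i,W_j,H_k)$ show that $\delta\Omega=0$ is exactly the four Helmholtz conditions (the remaining two triples are redundant by a separate argument).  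No integration for $A$ is needed.
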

To this end, it is first helpful to define some $1$-forms on $A$. Let $\{T^0,e^i,W^i\}\subset \Lambda^1(A)$ be the dual basis to $\{T_0,e_i,W_i\}$. Now let $\{\Gamma^0,\Psi^i,\Theta^i\}\subset \Lambda^1(A)$ denote the dual basis of $\{\Gamma_0,W_i,H_i\}$. A simple computation shows that
\begin{equation*}
    \Gamma^0=T^0,\quad \Psi^i=W^i-\gamma^i T^0+\lambda_j^i \Theta^j,\quad \Theta^i=e^i-w^i T^0.
\end{equation*}
\begin{proposition}
If $\Omega$ satisfying the conditions in \Cref{thm:twoform} exists, then it is of the form $\Omega=k_{ij}\Psi^i\wedge \Theta^j$, where $\det(k_{ij})\neq 0$.
\end{proposition}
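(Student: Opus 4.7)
The plan is to expand $\Omega$ in the dual basis $\{\Gamma^0,\Psi^i,\Theta^i\}$ of $\Lambda^1(A)$ and then peel off unwanted pieces using the four conditions. I would first write
\[\Omega = a_i \Gamma^0 \wedge \Psi^i + b_i \Gamma^0 \wedge \Theta^i + \alpha_{ij}\Psi^i\wedge\Psi^j + k_{ij}\Psi^i\wedge\Theta^j + \beta_{ij}\Theta^i\wedge\Theta^j,\]
with $\alpha_{ij}$ and $\beta_{ij}$ antisymmetric. Condition~(2) evaluates to $\Omega(W_i,W_j)=2\alpha_{ij}$, so $\alpha_{ij}=0$. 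Condition~(3), using $\Psi^i(\Gamma_0)=\Theta^i(\Gamma_0)=0$ and $\Gamma^0(\Gamma_0)=1$, gives $\iota_{\Gamma_0}\Omega=a_i\Psi^i+b_i\Theta^i=0$, so $a_i=b_i=0$. This leaves $\Omega = k_{ij}\Psi^i\wedge\Theta^j + \beta_{ij}\Theta^i\wedge\Theta^j$, and the real task will be to eliminate the $\beta$-piece and then to derive $\det(k_{ij})\neq 0$ from condition~(1).

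To kill $\beta$, the cleanest route is via the Lie derivative. Cartan's formula combined with conditions~(3) and~(4) gives $\mathcal{L}_{\Gamma_0}\Omega=\delta\iota_{\Gamma_0}\Omega+\iota_{\Gamma_0}\delta\Omega=0$. Using the structure equations coming from \Cref{lem:brackets} to read off $\delta\Psi^i$ and $\delta\Theta^i$, and contracting with $\Gamma_0$, one obtains the clean formulas
\[\mathcal{L}_{\Gamma_0}\Psi^i = -\lambda_j^i\Psi^j - \phi_j^i\Theta^j,\qquad \mathcal{L}_{\Gamma_0}\Theta^i = \Psi^i - \lambda_j^i\Theta^j.\]
Applying the Leibniz rule to $\mathcal{L}_{\Gamma_0}\Omega=0$ and grouping by type, the $\Psi^i\wedge\Psi^j$ part comes only from $k_{ij}\Psi^i\wedge\mathcal{L}_{\Gamma_0}\Theta^j$, whose vanishing forces $k_{ij}=k_{ji}$.

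The $\Psi^a\wedge\Theta^b$ part is the critical one. Collecting contributions from both $\mathcal{L}_{\Gamma_0}(k_{ij}\Psi^i\wedge\Theta^j)$ and $\mathcal{L}_{\Gamma_0}(\beta_{ij}\Theta^i\wedge\Theta^j)$, and using the antisymmetry of $\beta$ to combine $\beta_{ij}\Psi^i\wedge\Theta^j$ and $\beta_{ij}\Theta^i\wedge\Psi^j$ into $2\beta_{ij}\Psi^i\wedge\Theta^j$, one arrives at
\[\gamma(k_{ab}) - k_{ib}\lambda^i_a - k_{aj}\lambda^j_b + 2\beta_{ab} = 0.\]
Symmetrizing in $(a,b)$, the $\beta$-contribution drops out by antisymmetry and, using the symmetry of $k$, one obtains the Helmholtz-type identity $\gamma(k_{ab}) = k_{ib}\lambda^i_a + k_{aj}\lambda^j_b$. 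Antisymmetrizing, the $\gamma(k)$-piece drops out (since $k$ is symmetric) and the $k\lambda$-terms cancel pairwise (after relabeling dummy indices and using the symmetry of $k$ once more), leaving $4\beta_{ab}=0$. Hence $\beta_{ij}=0$ and $\Omega=k_{ij}\Psi^i\wedge\Theta^j$.

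For the non-degeneracy, I would absorb $k$ into the covectors by setting $\tilde\Theta^i=k_{ij}\Theta^j$, so that $\Omega=\Psi^i\wedge\tilde\Theta^i$. The standard symplectic-type computation then gives $\Omega^{\wedge n}=\pm\,n!\,\det(k_{ij})\,\Psi^1\wedge\cdots\wedge\Psi^n\wedge\Theta^1\wedge\cdots\wedge\Theta^n$, which is nowhere vanishing precisely when $\det(k_{ij})\neq 0$, as required by condition~(1). The main obstacle is the $\Psi\wedge\Theta$ step of the third paragraph: one has to coordinate the symmetry of $k$ and the antisymmetry of $\beta$ carefully to disentangle $\beta=0$ from the Helmholtz-type identity that falls out of the very same relation (and which is, incidentally, not assumed here but will be useful later in the paper).
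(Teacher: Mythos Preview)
Your proof is correct and follows essentially the same route as the paper: reduce $\Omega$ via conditions~(2) and~(3) to the form $k_{ij}\Psi^i\wedge\Theta^j+\beta_{ij}\Theta^i\wedge\Theta^j$, use Cartan's formula to get $\mathcal{L}_{\Gamma_0}\Omega=0$, extract $k_{ij}=k_{ji}$ from the $\Psi\wedge\Psi$ part, and then take the skew part of the $\Psi\wedge\Theta$ relation to force $\beta_{ij}=0$. Your treatment of the non-degeneracy via $\Omega^{\wedge n}=\pm n!\det(k_{ij})\,\Psi^1\wedge\cdots\wedge\Theta^n$ is slightly more explicit than the paper's, which simply asserts that maximal rank implies $\det(k_{ij})\neq 0$.
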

\begin{proof}
This proof is based on similar ideas as in \cite{crampin1984geohelmholtz}, but adapted to the current context of Lie algebroids. Since $\Omega(W_i,W_j)=0$ and $\iota_{\Gamma_0} \Omega=0$, it follows that $\Omega$ has the form
\begin{equation*}
    \Omega=k_{ij}\Psi^i\wedge \Theta^j+l_{ij}\Theta^i\wedge \Theta^j,
\end{equation*}
where we assume without loss of generality that $l_{ij}=-l_{ji}$. Since $\delta \Omega=0$ and $\iota_{\Gamma_0}\Omega=0$, it follows by Cartan's formula that
\begin{equation*}
    \mathcal{L}_{\Gamma_0}\Omega=\delta(\iota_{\Gamma_0}\Omega)+\iota_{\Gamma_0}(\delta \Omega)=0.
\end{equation*}
Since $\mathcal{L}_{\Gamma_0}\Psi^i(\Gamma_0)=0$, $\mathcal{L}_{\Gamma_0}\Psi^i(H_j)=-\phi_j^i$ and $\mathcal{L}_{\Gamma_0}\Psi^i(W_j)=-\lambda_j^i$, it follows that
\begin{equation*}
    \mathcal{L}_{\Gamma_0}\Psi^i=-\phi_j^i\Theta^j-\lambda_j^i\Psi^j.
\end{equation*}
Similarly,
\begin{equation*}
    \mathcal{L}_{\Gamma_0}\Theta^i=\Psi^i-\lambda_j^i \Theta^j.
\end{equation*}
By the Leibniz rule for Lie derivatives and skew-symmetry of $l_{ij}$, it follows that
\begin{multline*}
    \mathcal{L}_{\Gamma_0}\Omega=\rho(\Gamma_0)(k_{ij})\Psi^i\wedge \Theta^j+k_{ij}\left((\mathcal{L}_{\Gamma_0}\Psi^i)\wedge \Theta^j+\Psi^i\wedge (\mathcal{L}_{\Gamma_0}\Theta^j)\right)\\+\rho(\Gamma_0)(l_{ij})\Theta^i\wedge \Theta^j+2l_{ij}(\mathcal{L}_{\Gamma_0}\Theta^i)\wedge \Theta^j,
\end{multline*}
which gives
\begin{equation*}
    \mathcal{L}_{\Gamma_0}\Omega=k_{ij}\Psi^i\wedge \Psi^j+\left[\gamma(k_{ij})+2l_{ij}-\lambda_i^k k_{kj}-\lambda_j^k k_{ik}\right]\Psi^i\wedge \Theta^j+\left[\gamma(l_{ij})-k_{kj}\phi_i^k-2l_{kj}\lambda_i^k \right]\Theta^i\wedge \Theta^j.
\end{equation*}
Hence $\mathcal{L}_{\Gamma_0}\Omega=0$ if and only if the following three conditions hold:
\begin{enumerate}[label=(\alph*)]
    \item $k_{ij}=k_{ji}$,
    \item $2\gamma(l_{ij})-k_{kj}\phi_i^k-2l_{kj}\lambda_i^k+k_{ki}\phi_j^k+2l_{ki}\lambda_j^k=0$,
    \item $\gamma(k_{ij})+2l_{ij}-\lambda_i^k k_{kj}-\lambda_j^k k_{ik}=0$.
\end{enumerate}
Taking the skew-symmetric part of (c) gives that $l_{ij}=0$ (using (a)). This shows that $\Omega$ is of the form
\begin{equation*}
    \Omega=k_{ij}\Psi^i\wedge \Theta^j.
\end{equation*}
Finally, since $\Omega$ is of maximal rank, $(k_{ij})$ must be a non-singular matrix.
\end{proof}
It still remains to show that if $\Omega$ is of the form $k_{ij}\Psi^i\wedge \Theta^j$, where $\det(k_{ij})\neq 0$, then $\delta \Omega=0$ if and only  if the reduced Helmholtz conditions are satisfied. We must therefore compute its application on the triples $(\Gamma_0,W_i,W_j)$, $(\Gamma_0,W_i,H_j)$, $(\Gamma_0,H_i,H_j)$, $(W_i,W_j,W_k)$, $(W_i,W_j,H_k)$, $(W_i,H_j,H_k)$, $(H_i,H_j,H_k)$. We claim that four of these triples give the reduced Helmholtz conditions directly, two of them give redundant reduced Helmholtz conditions, and that one of them is identically satisfied.

A simple computation shows that
\begin{equation*}
    \Omega(\Gamma_0,H_i)=0,\quad \Omega(\Gamma_0,W_i)=0,\quad \Omega(W_i,W_j)=0,
\end{equation*}
\begin{equation*}
    \Omega(W_i,H_j)=k_{ij},\quad \Omega(H_i,H_j)=0,
\end{equation*}

We will make use of the following identity often:
\begin{equation*}
    \delta \omega(X,Y,Z)=\sum_{\mathrm{cyclic}(X,Y,Z)} \rho(X)(\omega(Y,Z))-\omega(\llbracket X,Y\rrbracket,Z),
\end{equation*}
where $\omega\in \Lambda^2(A)$ is a $2$-form and $X,Y,Z$ are sections of $A\to M$. From this, it is clear that $\Omega(W_i,W_j,W_k)=0$ holds identically.

\begin{proposition}
Let $\Omega=k_{ij}\Psi^i\wedge \Theta^j$. Then
\begin{equation*}
    \delta \Omega(\Gamma_0,W_i,W_j)=k_{ij}-k_{ji},
\end{equation*}
\begin{equation*}
    \delta \Omega(\Gamma_0,W_i,H_j)=\gamma(k_{ij})-k_{kj} \lambda_i^k-k_{ik}\lambda_j^k,
\end{equation*}
\begin{equation*}
    \delta \Omega(\Gamma_0,H_i,H_j)=k_{ki}\phi_j^k-k_{kj}\phi_i^k,
\end{equation*}
\begin{equation*}
    \delta \Omega(W_i,W_j,H_k)=\frac{\partial k_{jk}}{\partial w^i}-\frac{\partial k_{ik}}{\partial w^j}.
\end{equation*}
\end{proposition}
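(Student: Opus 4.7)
The plan is to compute each of the four expressions directly from the identity
\begin{equation*}
\delta \Omega(X,Y,Z) = \sum_{\mathrm{cyclic}(X,Y,Z)}\!\bigl(\rho(X)(\Omega(Y,Z)) - \Omega(\llbracket X,Y\rrbracket,Z)\bigr),
\end{equation*}
feeding in three kinds of data: the pairwise values of $\Omega$ listed just above the proposition (namely $\Omega(\Gamma_0,H_i)=\Omega(\Gamma_0,W_i)=\Omega(W_i,W_j)=\Omega(H_i,H_j)=0$ and $\Omega(W_i,H_j)=k_{ij}$), the anchor relations $\rho(\Gamma_0)=\gamma$, $\rho(W_i)=\partial/\partial w^i$, $\rho(H_i)=\psi_i^j\partial/\partial w^j$, and the bracket relations from \Cref{lem:brackets}. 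Because so many $\Omega$-values vanish, in each of the four cases only a small number of terms survive, and the computation reduces to identifying which bracket contributes a $W$-component paired with an $H$-argument (or vice versa).

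Concretely, for the first triple $(\Gamma_0,W_i,W_j)$ every term involving $\rho$ vanishes (each inner value of $\Omega$ is zero), and of the three bracket terms only the two coming from $\llbracket \Gamma_0,W_\bullet\rrbracket=-H_\bullet+\lambda_\bullet^kW_k$ survive, producing $\Omega(H_i,W_j)$ and $-\Omega(H_j,W_i)$, i.e., $k_{ij}-k_{ji}$. For $(\Gamma_0,W_i,H_j)$ the only surviving $\rho$-term is $\gamma(k_{ij})$, and the surviving bracket contributions are the $\lambda$-pieces of $\llbracket\Gamma_0,W_i\rrbracket$ paired with $H_j$ and the $\lambda$-pieces of $\llbracket\Gamma_0,H_j\rrbracket=\lambda_j^kH_k+\phi_j^kW_k$ paired with $W_i$; the $\phi_j^k W_k$ part annihilates because $\Omega(W_k,W_i)=0$, and the $\llbracket W_i,H_j\rrbracket$ term vanishes because it lies in $\mathrm{span}\{W_k\}$ and is paired with $\Gamma_0$. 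Assembling the surviving pieces yields $\gamma(k_{ij})-k_{kj}\lambda_i^k-k_{ik}\lambda_j^k$.

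For $(\Gamma_0,H_i,H_j)$ every $\rho$-term vanishes, and in $\llbracket\Gamma_0,H_i\rrbracket=\lambda_i^kH_k+\phi_i^kW_k$ it is now the $\phi$-piece that contributes (paired with $H_j$), the $\lambda$-piece being killed by $\Omega(H_k,H_j)=0$; the bracket $\llbracket H_i,H_j\rrbracket$ contributes nothing since both its components pair with $\Gamma_0$. The result $k_{ki}\phi_j^k-k_{kj}\phi_i^k$ follows. For $(W_i,W_j,H_k)$ only two $\rho$-terms survive, giving $\partial k_{jk}/\partial w^i-\partial k_{ik}/\partial w^j$, while all three bracket terms vanish: $\llbracket W_i,W_j\rrbracket=0$, and $\llbracket W_i,H_k\rrbracket$, $\llbracket W_j,H_k\rrbracket$ both lie in $\mathrm{span}\{W_l\}$ and are paired with $W_j$ or $W_i$.

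There is no substantive obstacle; the entire proof is mechanical. The only care needed is in sign bookkeeping when using $\Omega(H_i,W_j)=-k_{ji}$ (from the skew-symmetry of $\Omega$) and in tracking which bracket relation from \Cref{lem:brackets} carries the relevant structure functions. I would therefore present it as a brief case-by-case unpacking of the cyclic-sum formula, with each case displayed as a short chain of equalities in which the zero contributions are identified and the surviving ones collected into the stated right-hand side.
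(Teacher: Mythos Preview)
Your proposal is correct and follows exactly the same approach as the paper: a direct case-by-case application of the cyclic formula for $\delta\Omega$, using the precomputed values of $\Omega$ on pairs together with the bracket relations from \Cref{lem:brackets} and the anchor identities. The paper's proof is slightly more terse but identical in substance, including which terms are discarded as zero and which survive in each of the four cases.
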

\begin{proof}
Note that
\begin{align*}
    \delta \Omega(\Gamma_0,W_i,W_j)&=\rho(\Gamma_0)(\Omega(W_i,W_j))+\rho(W_i)(\Omega(W_j,\Gamma_0))+\rho(W_j)(\Omega(\Gamma_0,W_i))\\&-\Omega(\llbracket \Gamma_0,W_i\rrbracket,W_j)-\Omega(\llbracket W_i,W_j\rrbracket,\Gamma_0)-\Omega(\llbracket W_j,\Gamma_0\rrbracket, W_i)\\&=-\Omega(\llbracket \Gamma_0,W_i\rrbracket,W_j)+\Omega(\llbracket \Gamma_0,W_j\rrbracket,W_i)\\&=-\Omega(-H_i,W_j)+\Omega(-H_j,W_i)\\&=-\Omega(W_j,H_i)+\Omega(W_i,H_j)\\&=-k_{ji}+k_{ij},
\end{align*}
and
\begin{align*}
    \delta \Omega(\Gamma_0,W_i,H_j)&=\rho(\Gamma_0)(\Omega(W_i,H_j))-\Omega(\llbracket \Gamma_0,W_i\rrbracket,H_j)-\Omega(\llbracket H_j,\Gamma_0\rrbracket,W_i)\\&=\gamma(k_{ij})-\lambda_i^k \Omega(W_k,H_j)+\lambda_j^k\Omega(H_k,W_i)\\&=\gamma(k_{ij})-k_{kj} \lambda_i^k-k_{ik}\lambda_j^k.
\end{align*}
Similarly,
\begin{equation*}
    \delta \Omega(\Gamma_0,H_i,H_j)=-\Omega(\llbracket \Gamma_0,H_i\rrbracket,H_j)-\Omega(\llbracket H_j,\Gamma_0\rrbracket,H_i)=-k_{kj}\phi_i^k +k_{ki}\phi_j^k,
\end{equation*}
and
\begin{equation*}
    \delta \Omega(W_i,W_j,H_k)=\rho(W_i)(\Omega(W_j,H_k))+\rho(W_j)(\Omega(H_k,W_i))=\frac{\partial k_{jk}}{\partial w^i}-\frac{\partial k_{ik}}{\partial w^j}. \qedhere
\end{equation*}
\end{proof}

\begin{proposition}\label{prop:redundant}
If $\Gamma_0\iprod \delta \Omega=0$, $\delta\Omega(W_i,W_j,H_k)=0$, $\delta \Omega(W_i,W_j,W_k)=0$ for all $i,j,k$ then $\delta \Omega(W_i,H_j,H_k)=0$ and $\delta {\Omega}(H_i,H_j,H_k)=0$ for all $i,j,k$.
\end{proposition}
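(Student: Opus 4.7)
The plan is to exploit the identity $\delta^2 = 0$ on the IP Lie algebroid. Since $\delta\Omega$ is a 3-form, $\delta(\delta\Omega) = 0$ is a 4-form identity that can be evaluated on any 4-tuple of sections. By choosing the 4-tuples to contain exactly one copy of $\Gamma_0$ together with a mix of $W$'s and $H$'s, we arrange that almost all terms in the Lie-algebroid Koszul expansion of $\delta^2\Omega$ vanish because of the three standing hypotheses (namely $\Gamma_0\iprod\delta\Omega=0$, $\delta\Omega(W_i,W_j,H_k)=0$, and $\delta\Omega(W_i,W_j,W_k)=0$), leaving only a short relation between $\delta\Omega(W_\cdot,H_\cdot,H_\cdot)$ and $\delta\Omega(H_\cdot,H_\cdot,H_\cdot)$.

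First I would evaluate $\delta^2\Omega(\Gamma_0, W_i, W_j, H_k) = 0$. All $\rho$-derivative terms in the Koszul formula die by $\iota_{\Gamma_0}\delta\Omega=0$. Among the bracket terms, those involving $\llbracket W_\cdot,W_\cdot\rrbracket$ are zero, those of the form $\eta(\llbracket W_\cdot,H_\cdot\rrbracket,\Gamma_0,\cdot)$ and $\eta(\llbracket\Gamma_0,H_k\rrbracket,W_i,W_j)$ vanish by (i), (ii), or (iii), and the only surviving contributions come from $\llbracket\Gamma_0,W_i\rrbracket = -H_i+\lambda_i^m W_m$ and the analogous expression for $W_j$, using Lemma~\ref{lem:brackets}. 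After using (ii) to kill the $\lambda_i^m\eta(W_m,W_j,H_k)$ pieces, one is left with
\begin{equation*}
0 \;=\; \delta\Omega(W_i, H_j, H_k) \;-\; \delta\Omega(W_j, H_i, H_k).
\end{equation*}
Thus $g_{ijk}\coloneqq\delta\Omega(W_i,H_j,H_k)$ is symmetric in $(i,j)$. Since $\delta\Omega$ is a 3-form, $g_{ijk}$ is also antisymmetric in $(j,k)$, so the standard symmetry/antisymmetry argument $g_{ijk}=g_{jik}=-g_{jki}=-g_{kji}=g_{kij}=g_{ikj}=-g_{ijk}$ forces $g_{ijk}=0$.

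Next I would evaluate $\delta^2\Omega(\Gamma_0, W_a, H_b, H_c)=0$. By the same mechanism, every $\rho$-derivative vanishes except $\gamma\bigl(\delta\Omega(W_a,H_b,H_c)\bigr)$, and every bracket term involving $\Gamma_0$ in its argument list vanishes by (i). What remains, after substituting the Lemma~\ref{lem:brackets} brackets and suppressing the $\phi$-terms via (ii), is
\begin{equation*}
0 = \gamma\bigl(\delta\Omega(W_a,H_b,H_c)\bigr) + \delta\Omega(H_a,H_b,H_c) - \lambda_a^m\delta\Omega(W_m,H_b,H_c) - \lambda_b^m\delta\Omega(W_a,H_m,H_c) + \lambda_c^m\delta\Omega(W_a,H_m,H_b).
\end{equation*}
By the previous paragraph every term containing $\delta\Omega(W_\cdot,H_\cdot,H_\cdot)$ is already zero, so we conclude $\delta\Omega(H_a,H_b,H_c)=0$.

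The only real obstacle is bookkeeping: keeping the signs $(-1)^{i+j}$ straight in the 4-argument Koszul formula and verifying that every term involving a bracket of two basis sections of $A$ indeed falls into one of the three vanishing classes declared in the hypothesis. Everything else is mechanical; the conceptual content is simply that $\delta^2\Omega=0$ already encodes the two remaining reduced Helmholtz-type conditions once the ``$\Gamma_0$-slot'' and ``two-$W$-slots'' conditions are imposed.
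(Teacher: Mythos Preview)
Your approach is correct and is exactly the argument the paper has in mind: the reference cited there (Do's thesis) carries out the same $\delta^2\Omega=0$ computation on the evolution space, and the paper merely instructs the reader to transplant that computation to the IP Lie algebroid using its exterior derivative and the bracket relations of Lemma~\ref{lem:brackets}. Your two evaluations of $\delta(\delta\Omega)$ on $(\Gamma_0,W_i,W_j,H_k)$ and $(\Gamma_0,W_a,H_b,H_c)$, followed by the symmetry/antisymmetry trick, reproduce precisely that argument.
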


\begin{proof}
One may constitute a proof that is similar to the one of the corresponding property in  \cite{do2016invprob}, by adapting it to the current context of Lie algebroids and its exterior derivative.
 \end{proof}

We have shown that the conditions $\delta\Omega(\Gamma_0,W_i,W_j)=\delta \Omega(\Gamma_0,W_i,H_j)=\delta \Omega(\Gamma_0,H_i,H_j)=\delta \Omega(W_i,W_j,H_k)=0$ are precisely the reduced Helmholtz conditions, and that the two conditions $\delta \Omega(W_i,H_j,H_k)=0$ and $\delta\Omega(H_i,H_j,H_k)=0$ hold provided that the other four conditions hold. A tedious calculation shows that the first of these `redundant' conditions is given explicitly by
\begin{equation*}
    -\psi_j^l \frac{\partial k_{ik}}{\partial w^l}+\psi_k^l \frac{\partial k_{ij}}{\partial w^l}+(C_{ji}^l+\lambda_{ji}^l)k_{lk}+C_{jk}^lk_{il}-(C_{ki}^l+\lambda_{ki}^l)k_{lj}=0,
\end{equation*}
which can be rewritten as (swapping the indices $k$ and $l$, and using skew-symmetry of the structure constants)
\begin{equation}\label{eqn:redundanthelmholtz1}
    \psi_l^k \frac{\partial k_{ij}}{\partial w^k}+\frac{1}{2}\left(k_{ki}C_{jl}^k+k_{kj}C_{il}^k-k_{kl}\frac{\partial^2 \gamma^k}{\partial w^i \partial w^j}\right)=\psi_j^k \frac{\partial k_{il}}{\partial w^k}+\frac{1}{2}\left(k_{ik} C_{lj}^k+k_{kl} C_{ij}^k-k_{kj}\frac{\partial^2 \gamma^k}{\partial w^i \partial w^l}\right),
\end{equation}
and the second by
\begin{equation}\label{eqn:redundanthelmholtz2}
    r_{ij}^l k_{lk}+r_{jk}^l k_{li}+r_{ki}^l k_{lj}=0.
\end{equation}

\subsection{The EDS algorithm}

\subsubsection{Introduction}\label{introsubsec}

By \Cref{thm:twoform}, the problem that we need to solve is a special case of the following problem: \emph{Suppose that $\tau\colon A\to M$ is the  IP Lie algebroid and let $\Sigma$ be a subspace of $\Lambda^2(A)$. Find all maximal rank closed two-forms in $\Sigma$.} It turns out that we can translate this problem into finding integral manifolds of a set of one-forms on a $p$-prolongation of $A$. We outline this in a two-step process, below. The ideas in this subsection generalize those of \cite{aldridge2003invprob,anderson1992invprob,do2016invprob}.

\textbf{Step 1:} Since $\langle \Sigma\rangle_{\mathrm{alg}}$ is, in general, not a differential ideal, we instead consider the maximal subspace $\Sigma'$ of $\Sigma$ such that it becomes a differential ideal. Indeed, if $\omega\in \Sigma$ is a closed form, then $\langle \omega\rangle_{\mathrm{alg}}$ is a differential ideal. Hence to find closed two-forms in $\Sigma$, it suffices to consider only forms in $\Sigma'$.

This maximal subspace can be defined as follows: If $\widetilde{\Sigma}_1$ and $\widetilde{\Sigma}_2$ are subspaces of $\Sigma$ such that both $\langle \widetilde{\Sigma}_1\rangle_{\mathrm{alg}}$ and $\langle \widetilde{\Sigma}_2\rangle_{\mathrm{alg}}$ are differential ideals, then their ideal generated by their sum, i.e., $\langle \widetilde{\Sigma}_1+\widetilde{\Sigma}_2\rangle_{\mathrm{alg}}$ is also a differential ideal. If $\Sigma'$ is the sum of all subspaces of $\Sigma$ that generate differential ideals, then $\Sigma'$ is the maximal subspace of $\Sigma$ such that $\langle \Sigma'\rangle_{\mathrm{alg}}$ is a differential ideal.

There exists a recursive method to construct $\Sigma'$ \cite[\S 5]{anderson1992invprob}. Suppose we let $\Sigma^0\coloneqq \Sigma$. Then the process produces a sequence of submodules $\Sigma^0\supset \Sigma^1\supset \Sigma^2\supset \dots\supset \Sigma'$. Obtaining $\Sigma^{N+1}$ from $\Sigma^N$ involves computing the Lie algebroid exterior derivative of forms $\omega\in \Sigma^N$, and then checking whether or not $\delta \omega\in \langle \Sigma^N\rangle_{\mathrm{alg}}$ for all such $\omega$. If it holds, then $\Sigma^N=\Sigma'$. Otherwise, this leads to a further restriction on the admissible $2$-forms (which gives a submodule $\Sigma^{N+1}$), and the process is repeated from there. If $\Sigma'=\{0\}$, then $\Sigma$ contains no maximal rank $2$-forms.

\textbf{Step 2:} Let $\{\omega^1,\dots,\omega^d\}$ be a basis of $\Sigma'$. Then there exist $\xi_a^b\in \Lambda^1(A)$ such that
\begin{equation*}
    \delta \omega^a=\xi_a^b\wedge \omega^b, \quad  a=1,\dots,d.
\end{equation*}
Since we are looking for $\omega\in \Sigma'$ such that $\delta \omega=0$, we need to find all possible $d$-tuples of $1$-forms $(\eta_1^A,\dots,\eta_d^A)$ satisfying
\begin{equation*}
    \eta_a^A\wedge \omega^a=0.
\end{equation*}
Denote the dimension of such $d$-tuples by $e$. Then $\omega = {\overline s}_a \omega^a$ is closed if and only if there are functions ${\overline s}_a(t,w)$ and ${\overline p}_A(t,w)$ satisfying the system of partial differential equations
\begin{equation}\label{deltas}
    \delta{\overline s}_a+{\overline s}_b \xi_a^b+{\overline p}_A \eta_a^A=0,
\end{equation}
where $\delta{\overline s}_a = \rho^i_\alpha\frac{\partial {\overline s}_a}{\partial x^i}e^\alpha$, where $\rho^i_\alpha$ is short for the components of the anchor map of the IP Lie algebroid.

Here, the manifold $M$ has coordinates $(x^i)=(t,w^i)$. Now, we define the extended manifold $\overline{M}=M\times \mathbb{R}^d\times \mathbb{R}^e$ as the manifold with independent coordinates $(t,w^i,s_a,p_A)$, and we consider the fiber bundle $p\colon \overline M \to M$. Since $\overline M$ is a product manifold, the $p$-prolongation of the IP Lie algebroid $A\to M$ is the Lie algebroid $\overline{A}=A\times \mathrm{T}\mathbb{R}^d\times \mathrm{T}\mathbb{R}^e$ over $\overline{M}$.

If $\{e_\alpha\}$ is a basis of $\Gamma(A)$, then in correspondence with the notation of \Cref{sec:pullback}, we may introduce a basis $\{{\bar e}_\alpha, {\overline E}_a, {\overline E}_A\}$ for $\Gamma(\overline A)$ and denote the basis of sections of the dual bundle of $\overline{A}$ by $\{{\bar e}^\alpha, {\overline E}^a, {\overline E}^A\}$. We saw before that we may write ${\overline E}^a = \delta s_a$ and ${\overline E}^A = \delta p_A$. Moreover, $\bar{e}^{\alpha}$ can be identified with $e^\alpha$, as one-forms on $\overline A$. We may now introduce forms
\begin{equation*}
    \sigma_a\coloneqq \delta s_a+s_b \xi_a^b+p_A \eta_a^A\in \Lambda^1(\overline{A})
\end{equation*}
on the prolongation. Again, the forms $\xi_a^b, \eta_a^A$ on $A$ are interpreted here as forms on $\overline A$.

For a section $i\colon (t,w)\mapsto (t,w, {\overline s}_a(t,w), {\overline p}_A(t,w))$ of $p$ (with its corresponding $i$-induced section $I$), we get
\begin{equation*}
    I^*\sigma_a = I^*\delta s_a+{\overline s}_b I^*\xi_a^b+{\overline p}_A I^*\eta_a^A =   \delta {\overline s}_a+{\overline s}_b \xi_a^b+{\overline p}_A \eta_a^A  \in \Lambda^1(A).
\end{equation*}
It is now easy to see that the problem of finding a solution of \Cref{deltas} is equivalent to finding integral manifolds of $\mathcal{I}=\langle \sigma_a\rangle$. 


We conclude:
\begin{theorem} \label{prop:new}
Let $\Sigma'$ be the maximal subspace of $\Sigma$ that is a differential ideal. Then the problem of finding closed forms within $\Sigma'$ is equivalent to finding integral manifolds of the differential ideal $\mathcal{I}=\langle \sigma_a\rangle$  on the $p$-prolongation of the IP Lie algebroid by $p\colon M\times \mathbb{R}^d\times \mathbb{R}^e\to M$. 
\end{theorem}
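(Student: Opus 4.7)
The plan is to make the equivalence explicit by elaborating on the construction sketched above the statement. Fix a basis $\{\omega^1,\dots,\omega^d\}$ of $\Sigma'$, let $\xi_a^b \in \Lambda^1(A)$ be the structure one-forms satisfying $\delta\omega^a = \xi_a^b\wedge\omega^b$, and let $\{\eta_a^A\}_{A=1,\dots,e}$ be a basis of the module of one-form syzygies $\{(\eta_a)\colon \eta_a\wedge\omega^a = 0\}$. The first step is to show that a form $\omega = \overline{s}_a \omega^a \in \Sigma'$ is closed if and only if there exist functions $\overline{p}_A \in C^{\infty}(M)$ such that the pair $(\overline{s}_a,\overline{p}_A)$ satisfies \eqref{deltas}. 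In the forward direction, the Leibniz rule gives $\delta\omega = \zeta_a \wedge \omega^a$, where $\zeta_a$ is a one-form built linearly from $\delta \overline{s}_a$ and $\overline{s}_b \xi_a^b$; the vanishing of this expression forces $\zeta_a$ to lie in the syzygy module, hence to equal $-\overline{p}_A \eta_a^A$ for some functions $\overline{p}_A$. The backward direction is a direct substitution.

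The second step is to identify this PDE system with the integral manifold condition on the prolongation $\overline{A}$. For a section $i\colon (t,w)\mapsto (t,w,\overline{s}_a(t,w),\overline{p}_A(t,w))$ of $p$, \Cref{prop2} combined with the Lie algebroid morphism property of $I$ (so $I^{\ast}\circ\delta = \delta\circ I^{\ast}$) yields $I^{\ast}\overline{E}^a = \delta \overline{s}_a$, $I^{\ast}\overline{E}^A = \delta \overline{p}_A$, and $I^{\ast}\bar{e}^{\alpha} = e^{\alpha}$. Together with the trivial-connection identification at the end of \Cref{sec:pullback} (applicable since $p$ is a trivial bundle), a direct computation gives the key pullback formula
\begin{equation*}
    I^{\ast}\sigma_a = \delta \overline{s}_a + \overline{s}_b \xi_a^b + \overline{p}_A \eta_a^A,
\end{equation*}
converting \eqref{deltas} into the condition $I^{\ast}\sigma_a = 0$ on the induced section.

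The third step is to lift $I^{\ast}\sigma_a = 0$ to the condition that $i$ be an integral manifold of the full differential ideal $\langle \sigma_a\rangle$. This is automatic from the Lie algebroid morphism property: if $I^{\ast}\sigma_a = 0$, then $I^{\ast}\delta\sigma_a = \delta I^{\ast}\sigma_a = 0$ and, inductively, $I^{\ast}\theta = 0$ for every $\theta$ in the differential ideal generated by the $\sigma_a$. The converse implication, going from an integral manifold back to a closed form $\omega := \overline{s}_a\omega^a \in \Sigma'$, follows by reversing the argument of the first step.

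The main obstacle I anticipate is a regularity issue rather than a conceptual one. For the integer $e$ to be well-defined and for $\{\eta_a^A\}$ to form a genuine basis of the syzygy module, one needs the rank of the $C^{\infty}(M)$-linear map $(\eta_a)\mapsto \eta_a \wedge \omega^a$ to be locally constant on $M$; otherwise the result should be interpreted locally, on the open dense set where this rank is constant, and then patched. Granting this hypothesis, the remainder of the proof reduces to the two pullback computations described above, both of which are analogous to the argument in \Cref{eg:oneforms}.
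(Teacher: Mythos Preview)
Your proposal is correct and follows essentially the same route as the paper: the argument there is precisely the discussion preceding the theorem, ending with the pullback computation $I^{\ast}\sigma_a = \delta\overline{s}_a + \overline{s}_b\,\xi_a^b + \overline{p}_A\,\eta_a^A$ and the observation that this turns \eqref{deltas} into the integral-manifold condition. Your Step~3 (using $I^{\ast}\circ\delta = \delta\circ I^{\ast}$ to pass from $I^{\ast}\sigma_a=0$ to the full differential ideal) and your caveat about the constant-rank hypothesis on the syzygy module are both points the paper leaves implicit, so in those respects your write-up is slightly more careful than the original.
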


In one of the next subsections, we will consider a special case, where $\Sigma'=\Sigma\coloneqq \spn\{\omega^a= \omega^{ij}\colon 1\leq i,j\leq n\}$, with
\begin{equation*}
    \omega^{ij}\coloneqq \frac{1}{2}\left(\Psi^i\wedge \Theta^j+\Psi^j\wedge \Theta^i\right).
\end{equation*}

\subsubsection{\texorpdfstring{Computing the exterior derivatives of $\Psi^i$ and $\Theta^j$}{Computing the exterior derivatives of Psii and Thetaj}}

We first do some preparatory computations before applying the algorithm. To this end, we compute the exterior derivatives of $\Psi^i$ and $\Theta^j$ in terms of the basis $\{\Gamma^0,\Theta^i,\Psi^i\}$. We use the following identity for $\alpha\in \Lambda^1(A)$:
\begin{equation*}
    \delta \alpha(X,Y)=\rho(X)(\alpha(Y))-\rho(Y)(\alpha(X))-\alpha(\llbracket X,Y\rrbracket),
\end{equation*}
where $X$ and $Y$ are sections of $A\to M$. Since
\begin{multline*}
    \delta \alpha=\sum_k \delta \alpha(\Gamma_0,H_k)\Gamma^0\wedge \Theta^k+\sum_k \delta \alpha(\Gamma_0,W_k)\Gamma^0\wedge \Psi^k+\sum_{k<l} \delta\alpha(H_k,H_l)\Theta^k\wedge \Theta^l\\+\sum_{k,l} \delta \alpha(H_k,W_l)\Theta^k\wedge \Psi^l+\sum_{k<l} \delta \alpha(W_k,W_l)\Psi^k\wedge \Psi^l,
\end{multline*}
it suffices to compute $\delta \alpha(X,Y)$ for $X,Y\in \{\Gamma_0,H_i,W_i\}$. We have that
\begin{equation*}
    \delta \Psi^i(\Gamma_0,H_k)=-\phi_k^i,\quad \delta\Psi^i(\Gamma_0,W_k)=-\lambda_k^i,\quad \delta \Psi^i(H_k,H_l)=-r_{kl}^i
\end{equation*}
\begin{equation*}
    \delta\Psi^i(H_k,W_l)=-(C_{kl}^i+\lambda_{kl}^i),\quad \delta\Psi^i(W_k,W_l)=0.
\end{equation*}
Therefore,
\begin{equation}\label{eqn:dpsii}
    \delta \Psi^i=-\phi_k^i \Gamma^0\wedge \Theta^k-\lambda_k^i \Gamma^0\wedge \Psi^k-\frac{1}{2}r_{kl}^i \Theta^k\wedge \Theta^l-(C_{kl}^i+\lambda_{kl}^i) \Theta^k\wedge \Psi^l,
\end{equation}
where we have used skew-symmetry of $r_{kl}^i$ in the indices $kl$. Similarly, since
\begin{equation*}
    \delta \Theta^j(\Gamma_0,H_k)=-\lambda_k^j,\quad \delta \Theta^j(\Gamma_0,W_k)=\delta_k^j,\quad \delta\Theta^j(H_k,H_l)=-C_{kl}^j,
\end{equation*}
\begin{equation*}
    \delta \Theta^j(H_k,W_l)=0,\quad \delta \Theta^j(W_k,W_l)=0,
\end{equation*}
it follows that
\begin{equation}\label{eqn:dthetaj}
    \delta \Theta^j=-\lambda_k^j \Gamma^0\wedge \Theta^k+\Gamma^0\wedge \Psi^j-\frac{1}{2}C_{kl}^j \Theta^k\wedge \Theta^l.
\end{equation}

\subsubsection{\texorpdfstring{The EDS algorithm when $\langle \Sigma^0\rangle_{\mathrm{alg}}$ is a differential ideal}{The EDS algorithm when Sigma0 is a differential ideal}}

Let $\Sigma^0\coloneqq \spn\{\omega^{ij}\}$ and $\langle \Sigma^0\rangle_{\mathrm{alg}}$ be the algebraic ideal generated by the $\omega^{ij}$, where
\begin{equation*}
    \omega^{ij}\coloneqq \frac{1}{2}\left(\Psi^i\wedge \Theta^j+\Psi^j\wedge \Theta^i\right).
\end{equation*}
Note that $\omega^{ij}=\omega^{ji}$. We first determine when $\langle \Sigma^0\rangle_{\mathrm{alg}}$ is a differential ideal. Let $\omega$ be a $2$-form in $\Sigma^0$. Then, it can be written as $\omega=s_{ij}\omega^{ij}$, for some unique symmetric matrix $(s_{ij})$. We now compute the exterior derivative of $\omega$. We see that
\begin{equation*}
    \delta \omega=\delta s_{ij}\wedge \omega^{ij}+s_{ij}\left[\delta \Psi^i\wedge \Theta^j-\Psi^i\wedge \delta \Theta^j\right].
\end{equation*}
Using \Cref{eqn:dpsii,eqn:dthetaj}, one obtains
\begin{multline*}
    \delta \omega=\delta s_{ij}\wedge \omega^{ij}+s_{ij}\Bigg[\phi_k^i \Gamma^0\wedge \Theta^j\wedge \Theta^k-\lambda_k^j \Gamma^0\wedge \Psi^i\wedge \Theta^k-\lambda_k^i \Gamma^0\wedge \Psi^k\wedge \Theta^j\\+\frac{1}{2}C_{kl}^j \Psi^i\wedge \Theta^k\wedge \Theta^l+\left(-\frac{1}{2}C_{kl}^i+\frac{1}{2}\frac{\partial^2 \gamma^i}{\partial w^k \partial w^l}\right)\Psi^l\wedge \Theta^j\wedge \Theta^k-\frac{1}{2}r_{kl}^i\Theta^j\wedge \Theta^k\wedge \Theta^l\Bigg],
\end{multline*}
where we have used that $s_{ij} \Gamma^0\wedge \Psi^i\wedge \Psi^j=0$, and $C_{kl}^i+\lambda_{kl}^i=\frac{1}{2}C_{kl}^i-\frac{1}{2}\frac{\partial^2 \gamma^i}{\partial w^k \partial w^l}$. Since
\begin{equation*}
    -s_{ij}\lambda_k^i \Gamma^0\wedge \Psi^k\wedge \Theta^j-s_{ij}\lambda_k^j \Gamma^0\wedge \Psi^i\wedge \Theta^k=-2s_{ij}\lambda_k^i \Gamma^0\wedge \omega^{kj}=-2s_{kj}\lambda_i^k \Gamma^0\wedge \omega^{ij},
\end{equation*}
\begin{equation*}
    -\frac{s_{ij}}{2}C_{kl}^i \Psi^l\wedge \Theta^j\wedge \Theta^k+\frac{s_{ij}}{2}C_{kl}^j \Psi^i\wedge \Theta^k\wedge \Theta^l=s_{ij}C_{kl}^i \Theta^l\wedge \omega^{kj}=s_{kj}C_{il}^k \Theta^l\wedge \omega^{ij},
\end{equation*}
and
\begin{equation*}
    \frac{s_{ij}}{2}\frac{\partial^2 \gamma^i}{\partial w^k \partial w^l} \Psi^l\wedge \Theta^j\wedge \Theta^k=-\frac{s_{ij}}{2}\frac{\partial^2 \gamma^i}{\partial w^k \partial w^l} \Theta^j\wedge \omega^{kl}=-\frac{s_{kl}}{2}\frac{\partial^2 \gamma^k}{\partial w^i \partial w^j}\Theta^l\wedge \omega^{ij},
\end{equation*}
it follows that
\begin{equation}\label{eqn:deltaomega}
    \delta \omega=\left[\delta s_{ij}-2s_{kj}\lambda_i^k \Gamma^0+s_{kj}C_{il}^k \Theta^l-\frac{s_{kl}}{2}\frac{\partial^2 \gamma^k}{\partial w^i \partial w^j}\Theta^l\right]\wedge \omega^{ij}+s_{ik}\phi_j^k \Gamma^0\wedge \Theta^i\wedge \Theta^j-\frac{s_{il}}{2}r_{jk}^l \Theta^i\wedge \Theta^j\wedge \Theta^k.
\end{equation}
For $\delta \omega\in \langle \Sigma^0\rangle_{\mathrm{alg}}$ to hold, the last two sets of terms in \Cref{eqn:deltaomega} must vanish. The first vanishes if and only if
\begin{equation}\label{eqn:phicondition}
    s_{ik}\phi_j^k=s_{jk}\phi_i^k,
\end{equation}
and the second vanishes if and only if $s_{ij}$ satisfies
\begin{equation}\label{eqn:curvaturecondition}
    s_{il}r_{jk}^l+s_{jl}r_{ki}^l+s_{kl}r_{ij}^l=0.
\end{equation}
Note that the first equation is the Helmholtz condition generated by $\delta \Omega(\Gamma_0,H_i,H_j)=0$ (see also \Cref{def:reducedmultiplier}), and the second is the redundant Helmholtz condition \eqref{eqn:redundanthelmholtz2} (obtained via $\delta \Omega(H_i,H_j,H_k)=0$).

We make an extra assumption, for the rest of the paper: we suppose that \Cref{eqn:phicondition,eqn:curvaturecondition} hold identically for every symmetric matrix $(s_{ij})$. This assumption is inspired by the fact that this occurs in the  examples that we will consider later and it allows us to skip Step 1 of the EDS algorithm. In that case,
\begin{equation*}
    \delta\omega=\left[\delta s_{ij}-2s_{kj}\lambda_i^k \Gamma^0+s_{kj}C_{il}^k \Theta^l-\frac{s_{kl}}{2}\frac{\partial^2 \gamma^k}{\partial w^i \partial w^j}\Theta^l\right]\wedge \omega^{ij}.
\end{equation*}
It can be verified that if $\eta_{ij}=\eta_{ji}$ is a collection of $1$-forms such that
\begin{equation*}
    \eta_{ij}\wedge \omega^{ij}=0,
\end{equation*}
then $\eta_{ij}$ is of the form
\begin{equation*}
    \eta_{ij}=P_{ijk}\Psi^k+Q_{ijk}\Theta^k,
\end{equation*}
where $P_{ijk}$ and $Q_{ijk}$ are symmetric in the indices $ijk$. Hence $\omega=s_{ij}\omega^{ij}$ is closed if and only if $s_{ij}$ are solutions to the system $\sigma_{ij}=0$, where
\begin{equation*}
    \sigma_{ij}\coloneqq \delta s_{ij}-(s_{ki}\lambda_j^k+s_{kj}\lambda_i^k)\Gamma^0+P_{ijl}\Psi^l+\left[\frac{1}{2}\left(s_{ki}C_{jl}^k+s_{kj}C_{il}^k-s_{kl}\frac{\partial^2 \gamma^k}{\partial w^i \partial w^j}\right)+Q_{ijl}\right]\Theta^l.
\end{equation*}
In the current set-up, the extended manifold is $\overline{M}=M\times \mathbb{R}^{n^2}\times \mathbb{R}^{2n^{3}}$, with coordinates
\begin{equation*}
    (t,w^i,s_{ij},P_{ijk},Q_{ijk}).
\end{equation*}
As before in \Cref{introsubsec}, via our construction in \Cref{sec:pullback}, we obtain the $p$-prolongation Lie algebroid $\bar{\tau}\colon \overline{A}\to \overline{M}$, where a basis of sections of the dual bundle of $\overline{A}=A\times \mathrm{T}\mathbb{R}^{n^2}\times \mathrm{T}\mathbb{R}^{2n^{3}}$ is given by $\{\Gamma^0,\Psi^i,\Theta^i,\delta s_{ij},\delta P_{ijk},\delta Q_{ijk}\}$. We can then regard $\sigma_{ij}$ as $1$-forms on $\overline{A}$.

We are finally ready to state a special case  of \Cref{prop:new}.

\begin{theorem}\label{thm:helmholtzintmfd}
  Suppose that $\langle \Sigma^0\rangle_{\mathrm{alg}}$ is a differential ideal. Let $\mathcal{I}=\langle \sigma_{ij}\rangle$. A solution $(k_{ij})$ to the reduced Helmholtz conditions exists if and only if there exists an integral manifold $i\colon M\to \overline{M}$,
\begin{equation*}
    (t,w)\longmapsto \left(t,w,s_{ij}=\bar{s}_{ij}(t,w),P_{ijk}=\overline{P}_{ijk}(t,w),Q_{ijk}=\overline{Q}_{ijk}(t,w)\right)
\end{equation*}
of $\mathcal{I}$ on  the IP Lie algebroid, with $\det(\bar{s}_{ij})\neq 0$.
\end{theorem}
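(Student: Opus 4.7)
The plan is to make the equivalence explicit by pulling back the one-forms $\sigma_{ij}$ along the $i$-induced section $I$, expanding the result in the basis $\{\Gamma^0,\Psi^l,\Theta^l\}$ of $\Lambda^1(A)$, and matching the vanishing of each coefficient with one of the reduced Helmholtz conditions. Applying the general pullback formulas of \Cref{sec:pullback} to the fibre coordinates $(s_{ij},P_{ijk},Q_{ijk})$ of $\overline{M}$ gives $I^{\ast}(\delta s_{ij})=\delta\bar{s}_{ij}$ (with $\delta$ now the exterior derivative on $A$), together with analogous formulas for $\delta P_{ijk}$ and $\delta Q_{ijk}$, while $\Gamma^0,\Psi^l,\Theta^l$ pull back to themselves under the identification $\bar{e}^{\alpha}\leftrightarrow e^{\alpha}$. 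Expanding $\delta\bar{s}_{ij}$ using the anchor of the IP Lie algebroid yields
\[
\delta\bar{s}_{ij} \;=\; \gamma(\bar{s}_{ij})\,\Gamma^0 \;+\; \frac{\partial\bar{s}_{ij}}{\partial w^l}\,\Psi^l \;+\; \psi_l^k\frac{\partial\bar{s}_{ij}}{\partial w^k}\,\Theta^l,
\]
so the equation $I^{\ast}\sigma_{ij}=0$ splits into three independent equations. The $\Gamma^0$-coefficient reads $\gamma(\bar{s}_{ij}) - \bar{s}_{ki}\lambda_j^k - \bar{s}_{kj}\lambda_i^k = 0$; the $\Psi^l$-coefficient reads $\overline{P}_{ijl} = -\partial\bar{s}_{ij}/\partial w^l$; and the $\Theta^l$-coefficient determines $\overline{Q}_{ijl}$ explicitly as $-\psi_l^k\partial\bar{s}_{ij}/\partial w^k - \tfrac{1}{2}(\bar{s}_{ki}C_{jl}^k + \bar{s}_{kj}C_{il}^k - \bar{s}_{kl}\partial^2\gamma^k/\partial w^i\partial w^j)$.

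For the backward direction, I would note that $\bar{s}_{ij}=\bar{s}_{ji}$ may be assumed without loss of generality since only the symmetric part of $s_{ij}$ contributes to $\omega=s_{ij}\omega^{ij}$, and nondegeneracy is the standing hypothesis $\det(\bar{s}_{ij})\neq 0$. The $\Gamma^0$-equation is already the $\gamma$-Helmholtz condition. The required full symmetry of $\overline{P}_{ijl}$ in $ijl$, obtained from the characterisation of the annihilator of $\omega^{ij}$ recalled before the theorem, combined with $\bar{s}_{ij}=\bar{s}_{ji}$, amounts exactly to the Helmholtz condition $\partial\bar{s}_{jk}/\partial w^i=\partial\bar{s}_{ik}/\partial w^j$. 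The remaining $\phi$-Helmholtz condition holds identically by the standing assumption of the subsection, and hence $(\bar{s}_{ij})$ is a reduced multiplier. For the forward direction, starting from a reduced multiplier $(k_{ij})$, one sets $\bar{s}_{ij}:=k_{ij}$ and defines $\overline{P}_{ijl},\overline{Q}_{ijl}$ through the formulas above; the $\Gamma^0$-equation and the definitions of $\overline{P},\overline{Q}$ then hold by construction, while full symmetry of $\overline{P}_{ijl}$ in $ijl$ follows immediately from $k_{ij}=k_{ji}$ and the Helmholtz condition on $\partial k_{ij}/\partial w^l$.

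The main obstacle is verifying full symmetry of $\overline{Q}_{ijl}$ in $ijl$, which is also required for $i$ to be a bona fide integral manifold of the ideal $\mathcal{I}$. Symmetry in the pair $ij$ follows readily from $k_{ij}=k_{ji}$, the symmetry of the Hessian of $\gamma^k$, and the skew-symmetry of $C^k_{ij}$, but symmetry under $j\leftrightarrow l$ is precisely the redundant Helmholtz condition \eqref{eqn:redundanthelmholtz1}, which is not one of the defining conditions of a reduced multiplier. This is resolved by invoking \Cref{prop:redundant}: whenever the four primary reduced Helmholtz conditions hold (as they do by hypothesis in the forward direction), \eqref{eqn:redundanthelmholtz1} is automatic. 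This closes the forward direction and completes the proof.
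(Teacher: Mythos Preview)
Your proof is correct and follows essentially the same route as the paper's own argument: both compute $I^{\ast}\sigma_{ij}$ explicitly, split the vanishing into the $\Gamma^0$-, $\Psi^l$-, and $\Theta^l$-coefficients, and in the forward direction handle the nontrivial total symmetry of $\overline{Q}_{ijl}$ by invoking the redundant Helmholtz condition \eqref{eqn:redundanthelmholtz1}, which is guaranteed by \Cref{prop:redundant}. The only cosmetic difference is that you make the appeal to \Cref{prop:redundant} explicit, whereas the paper cites \eqref{eqn:redundanthelmholtz1} directly.
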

\begin{proof}
We first do some preparatory computations. Note that
\begin{equation*}
    I^{\ast}(s_{ij})=\gamma(\bar{s}_{ij})+\frac{\partial \bar{s}_{ij}}{\partial w^l}\Psi^l+\psi_l^k \frac{\partial \bar{s}_{ij}}{\partial w^k}\Theta^l.
\end{equation*}
Hence
\begin{multline*}
    I^{\ast}(\sigma_{ij})=\left[\gamma(\bar{s}_{ij})-\bar{s}_{ki}\lambda_j^k-\bar{s}_{kj}\lambda_i^k\right]\Gamma^0+\left[\overline{P}_{ijl}+\frac{\partial \bar{s}_{ij}}{\partial w^l}\right]\Psi^l\\+\left[\psi_l^k \frac{\partial \bar{s}_{ij}}{\partial w^k}+\frac{1}{2}\left(\bar{s}_{ki}C_{jl}^k+\bar{s}_{kj}C_{il}^k-\bar{s}_{kl}\frac{\partial^2 \gamma^k}{\partial w^i \partial w^j}\right)+\overline{Q}_{ijl}\right]\Theta^l.
\end{multline*}
Note that $I^{\ast}(\sigma_{ij})=0$  if and only if the following equations hold:
\begin{enumerate}[label=(\alph*)]
    \item $\gamma(\bar{s}_{ij})-\bar{s}_{ki}\lambda_j^k-\bar{s}_{kj}\lambda_i^k=0$,
    \item $\overline{P}_{ijl}+\frac{\partial \bar{s}_{ij}}{\partial w^l}=0$ for all $l\in \{1,\dots,n\}$,
    \item $\psi_l^k \frac{\partial \bar{s}_{ij}}{\partial w^k}+\frac{1}{2}\left(\bar{s}_{ki}C_{jl}^k+\bar{s}_{kj}C_{il}^k-\bar{s}_{kl}\frac{\partial^2 \gamma^k}{\partial w^i \partial w^j}\right)+\overline{Q}_{ijl}=0$ for all $l\in \{1,\dots,n\}$.
\end{enumerate}
Suppose first that a solution to the reduced Helmholtz conditions exists. Denote this solution by $(k_{ij})=(\bar{s}_{ij})$. Then it follows directly that (a) holds. We now claim that
\begin{equation*}
    \overline{P}_{ijl}=-\frac{\partial \bar{s}_{ij}}{\partial w^l},\quad \overline{Q}_{ijl}=-\psi_l^k \frac{\partial \bar{s}_{ij}}{\partial w^k}-\frac{1}{2}\left(\bar{s}_{ki}C_{jl}^k+\bar{s}_{kj}C_{il}^k-\bar{s}_{kl}\frac{\partial^2 \gamma^k}{\partial w^i \partial w^j}\right),
\end{equation*}
is a solution to equations (b) and (c). The only requirement that these functions must satisfy is that $\overline{P}_{ijl}$ and $\overline{Q}_{ijl}$ be totally symmetric. By construction, they are both symmetric in $ij$. Furthermore, note that
\begin{equation*}
    \overline{P}_{ijl}=-\frac{\partial \bar{s}_{ij}}{\partial w^l}=-\frac{\partial \bar{s}_{lj}}{\partial w^i}=\overline{P}_{lji}.
\end{equation*}
As such, $\overline{P}_{ijl}$ is also symmetric in $il$. Hence $\overline{P}_{ijl}$ is totally symmetric. Similarly, $\overline{Q}_{ijl}$ is symmetric in the indices $jl$, because by one of the redundant Helmholtz conditions (specifically, \Cref{eqn:redundanthelmholtz1}), and the symmetry of $\bar{s}_{ij}$, one gets
\begin{equation*}
    \overline{Q}_{ijl}=-\psi_j^k \frac{\partial \bar{s}_{il}}{\partial w^k}-\frac{1}{2}\left(\bar{s}_{ki}C_{lj}^k+\bar{s}_{kl}C_{ij}^k-\bar{s}_{kj}\frac{\partial^2 \gamma^k}{\partial w^i \partial w^l}\right)=\overline{Q}_{ilj},
\end{equation*}
which implies that $\overline{Q}_{ijl}$ is totally symmetric.

Conversely, suppose that we are given a solution $(\bar{s}_{ij},\overline{P}_{ijk}, \overline{Q}_{ijk})$ to the system of equations consisting of (a), (b), (c) and $\det(\bar{s}_{ij})\neq 0$. We claim that $(\bar{s}_{ij})$ satisfies the reduced Helmholtz conditions. Note that if $\langle \Sigma^0\rangle_{\mathrm{alg}}$ is a differential ideal, the condition
\begin{equation*}
    \bar{s}_{ki}\phi_j^k=\bar{s}_{kj}\phi_i^k
\end{equation*}
holds by assumption. It therefore remains to prove that
\begin{equation}\label{eqn:helmholtzDV}
    \frac{\partial \bar{s}_{jk}}{\partial w^i}=\frac{\partial \bar{s}_{ik}}{\partial w^j}.
\end{equation}
But, by equation (b), we obtain
\begin{equation*}
    \overline{P}_{jki}+\frac{\partial \bar{s}_{jk}}{\partial w^i}=0,
\end{equation*}
and
\begin{equation*}
    \overline{P}_{ikj}+\frac{\partial \bar{s}_{ik}}{\partial w^j}=0.
\end{equation*}
Since $\overline{P}_{ijk}$ is totally symmetric, it follows that $\overline{P}_{ikj}=\overline{P}_{jki}$. Hence, $\bar{s}_{ij}$ must satisfy \Cref{eqn:helmholtzDV}, as required.
\end{proof}

\begin{example}
The canonical connection on a Lie group is defined via the covariant derivative operator $\nabla_X Y=\frac{1}{2}[X,Y]$, where $X$ and $Y$ are any left-invariant vector fields on $G$. The invariant inverse problem for the canonical connection was studied in \cite{crampin2008invariantinv,muzsnay2005invariantinv}. In this case, the reduced vector field is given by $\gamma=\frac{\partial}{\partial t}$, so that $\gamma^i=0$ for all $i\in \{1,\dots,n\}$. We then have that
\begin{equation*}
    \lambda_i^j=\frac{1}{2}w^k C_{ki}^j=\psi_i^j,
\end{equation*}
\begin{equation*}
    \phi_i^j=-\lambda_i^k \lambda_k^j=-\frac{1}{4}w^l w^m C_{li}^k C_{mk}^j,
\end{equation*}
\begin{equation*}
    \lambda_{ij}^k=-\frac{1}{2}C_{ij}^k,
\end{equation*}
\begin{equation*}
    r_{ij}^k=\frac{3}{4}w^m(C_{mi}^l C_{jl}^k-C_{mj}^l C_{il}^k)+\frac{1}{2}w^m C_{ml}^k C_{ij}^l.
\end{equation*}
We first consider a situation where a reduced multiplier exists. If $G$ is the one-dimensional Lie group ${\mathbb R}^1$ of the real line, then $C_{11}^1=0$, $\phi_1^1=0$ and $r_{11}^1=0$. Then $\langle \Sigma^0\rangle_{\mathrm{alg}}$ is a differential ideal. In this case, one easily verifies that a solution to the reduced Helmholtz conditions is given by $k_{11}(t,w^1)=g(w^1)$, where $g\neq 0$ is any nowhere zero smooth function. On the other hand, if one was to use integral manifolds on Lie algebroids, note that
\begin{equation*}
    I^{\ast}(\sigma_{11})=\gamma(\bar{s}_{11})\Gamma^0+\left[\overline{P}_{111}+\frac{\partial \bar{s}_{11}}{\partial w^1}\right]\Psi^1+\overline{Q}_{111}\Theta^1.
\end{equation*}
Hence, a section $i\colon M\to \overline{M}$ satisfying $I^{\ast}(\sigma_{11})=0$ is given by $\bar{s}_{11}(t,w^1)=g(w^1)$, $\overline{P}_{111}(t,w^1)=-\frac{\partial g(w^1)}{\partial w^1}$ and $\overline{Q}_{111}(t,w^1)=0$. Since \Cref{thm:helmholtzintmfd} asks for $\bar{s}_{11}\neq 0$, this is in agreement with our result. 

Next, we consider the Heisenberg group $G$. Its Lie algebra is $3$-dimensional, with the single non-trivial bracket $\{E_1,E_3\}=E_2$. Hence,
\begin{equation*}
    C_{ij}^k=(\delta_{i1}\delta_{j3}-\delta_{i3}\delta_{j1})\delta^{k2}.
\end{equation*}
It was shown in \cite{crampin2008invariantinv} that an autonomous reduced multiplier does not exist (due to the condition $\det(k_{ij})\neq 0$). It is not difficult to see that we may extend this observation to the time-dependent case.

Here, it corresponds to a situation where $\langle \Sigma^0\rangle_{\mathrm{alg}}$ is a differential ideal, since $\phi_i^j=0$ and $r_{ij}^k=0$. Since there does not exist a reduced multiplier, as a consequence of \Cref{thm:helmholtzintmfd}, one can also not find a section $i$ such that $I^{\ast}(\sigma_{ij})=0$ and $\det(\bar{s}_{ij})\neq 0$.
\end{example}

\section{Outlook}\label{sec:outlook}
We end the paper with two different directions for future research. 

First, it may be possible to extend our discussion on the invariant inverse problem, as follows. Lagrangian mechanics, in the generalization that the configuration bundle itself is an arbitrary Lie algebroid, has been studied in many papers, beginning with those by A.\ Weinstein \cite{weinstein1996lagrangian} and E.\ Mart\'inez \cite{martinez2001lagrangeliealgebroid}. Studies of the inverse problem for Lagrangians on general Lie
algebroids were done in e.g., \cite{barbero2016invproblemliealgebroids,  popescu2011metric, popescu2014invproblemliealgebroids}. In particular, this is relevant for the  \emph{autonomous} inverse problem: the original inverse problem corresponds to Lagrangians on a tangent bundle $\mathrm{T}M$.  Besides, the EDS approach to the original inverse problem typically makes use of  \emph{time-dependent} Lagrangians on $\mathbb R\times\mathrm{T}M$. Likewise, our current extension of the invariant inverse problem deals with time-dependent Lagrangians on $\mathbb{R}\times \mathfrak{g}$. In the literature, one may also find many papers on the extension of Lagrangian mechanics on Lie algebroids to the time-dependent case,  using affine bundles (starting with  \cite{martinez2002prolongation}). 

Taking all this into account, and considering the similarity between Thm.\ 4.8 of \cite{barbero2016invproblemliealgebroids} and \Cref{thm:twoform} in our paper, we think that it is very likely that we can also use our EDS approach to determine conditions on the existence of solutions to the Helmholtz conditions for a general Lie algebroid. The invariant inverse problem on a Lie group we have discussed in this paper could then be considered as a special case.

In the first half of the paper we have introduced the notion of an integral manifold, for an EDS on a Lie algebroid. In an upcoming paper, we will develop the theory of such integral manifolds in more depth and we will outline some methods for establishing the existence of integral manifolds. Specifically, we will extend the notion of an integral element. This will allow us to prove a Lie algebroid version of the acclaimed Cartan--K{\"a}hler theorem. This theorem, which appears in many seemingly different versions in the literature (cf. \cite{bryant1991exterior,ivey2016cartan}), studies in essence the integrability of PDEs in a geometric fashion. It relies on a process by which one may successively enlarge integral manifolds of a certain dimension to integral manifolds of one higher dimension. 

\paragraph{Acknowledgements.} We are indepted to Sonja Hohloch for many useful discussions. The authors thank the Research Fund of the University of Antwerp (BOF) for its support through the DOCPRO project 46954.

\end{document}